\newif\ifpreprint
  \newtheorem{theorem}{Theorem}
  \newtheorem*{theorem1a}{Theorem 1a}
  \newtheorem*{theorem1b}{Theorem 1b}
  \newtheorem{lemma}{Lemma}
  \newtheorem{proposition}{Proposition}
  \newtheorem{definition}{Definition}
  \theoremstyle{remark}
\pgfplotsset{compat=1.3}
\tikzset{
  >=stealth',
  punktchain/.style={
    %rectangle, 
    %rounded corners, 
    % fill=black!10,
    draw=black, very thick,
    text width=10em, 
    minimum height=3em, 
    text centered, 
    on chain},
%  line/.style={draw, thick, <-},
%  element/.style={
% %    tape,
%    top color=white,
%    bottom color=blue!50!black!60!,
%    minimum width=8em,
%    draw=blue!40!black!90, very thick,
%    text width=10em, 
%    minimum height=3.5em, 
%    text centered, 
%    on chain},
%  every join/.style={->, thick,shorten >=1pt},
%  decoration={brace},
  tuborg/.style={decorate},
  tubnode/.style={midway, right=2pt},
}
\newcommand{\ri}{\textit{(i)}\ }
\newcommand{\rii}{\textit{(ii)}\ }
\newcommand{\riip}{\textit{(ii)}}
\newcommand{\supp}{\mathrm{supp}}
\newcommand{\imagunit}{\mathrm{i}}
\newcommand{\bszero}{\boldsymbol{0}} % vector of zeros
\newcommand{\bsone}{\boldsymbol{1}}  % vector of ones
\newcommand{\bsa}{{\boldsymbol{a}}}
\newcommand{\bsb}{{\boldsymbol{b}}}
\newcommand{\bsh}{{\boldsymbol{h}}}
\newcommand{\bsp}{{\boldsymbol{p}}}
\newcommand{\bsx}{{\boldsymbol{x}}}
\newcommand{\bsy}{{\boldsymbol{y}}}
\newcommand{\bsz}{{\boldsymbol{z}}}
\newcommand{\bsalpha}{{\boldsymbol{\alpha}}}
\newcommand{\bstau}{{\boldsymbol{\tau}}}
\newcommand{\setu}{{\mathfrak{u}}}
\newcommand{\setv}{{\mathfrak{v}}}
\newcommand{\setw}{{\mathfrak{w}}}
\newcommand{\rd}{\,\mathrm{d}}
\newcommand{\rmd}{\mathrm{d}}
\newcommand{\rmi}{\imagunit}
\newcommand{\Z}{\mathbb{Z}}
\newcommand{\R}{\mathbb{R}}
\newcommand{\N}{\mathbb{N}}
\newcommand{\calH}{\mathcal{H}}
\newcommand{\KKor}{K^{\mathrm{Kor}}}
\newcommand{\KKorab}{K^{\mathrm{Kor}, [\bsa,\bsb]}}
\newcommand{\KSob}{K^{\mathrm{Sob}}}
\newcommand{\KSobstar}{K^{\mathrm{Sob},*}}
\newcommand{\KSobab}{K^{\mathrm{Sob}, [\bsa,\bsb]}}
\newcommand{\KSobabone}{K^{\mathrm{Sob}, [a,b]}}
\newcommand{\tilB}{\widetilde{B}}
\definecolor{darkred}{RGB}{139,0,0}
\definecolor{darkgreen}{RGB}{0,130,70}
\definecolor{darkorange}{RGB}{255,140,0}
\definecolor{darkorange}{RGB}{180,60,0}
\title{Scaled lattice rules for integration on $\R^d$ achieving \\ higher-order convergence with error analysis \\ in terms of orthogonal projections onto periodic spaces}
\author[1]{Dirk Nuyens}
\author[2]{Yuya Suzuki}
\affil[1]{dirk.nuyens@cs.kuleuven.be, KU~Leuven, Belgium}
\affil[2]{yuya.suzuki@ntnu.no, NTNU, Norway}
\begin{document}
 
\maketitle

\begin{abstract}
We introduce a new method to approximate integrals $\int_{\R^d} f(\bsx) \rd\bsx$ which simply scales lattice rules from the unit cube $[0,1]^d$ to properly sized boxes on $\R^d$, hereby achieving higher-order convergence that matches the smoothness of the integrand function $f$ in a certain Sobolev space of dominating mixed smoothness.
Our method only assumes that we can evaluate the integrand function $f$ and does not assume a particular density nor the ability to sample from it.
In particular, for the theoretical analysis we show a new result that the method of adding Bernoulli polynomials to a function to make it ``periodic'' on a box without changing its integral value over the box, is equivalent to an orthogonal projection from a well chosen Sobolev space of dominating mixed smoothness to an associated periodic Sobolev space of the same dominating mixed smoothness, which we call a Korobov space.
We note that the Bernoulli polynomial method is often not used because of its excessive computational complexity and also here we only make use of it in our theoretical analysis.
We show that our new method of applying scaled lattice rules to increasing boxes can be interpreted as orthogonal projections with decreasing projection error.
Such a method would not work on the unit cube since then the committed error caused by non-periodicity of the integrand would be constant, but for integration on the Euclidean space we can use the certain decay towards zero when the boxes grow.
Hence we can bound the truncation error as well as the projection error and show higher-order convergence in applying scaled lattice rules for integration on Euclidean space.
We illustrate our theoretical analysis by numerical experiments which confirm our findings.
\end{abstract}

\section{Introduction}\label{sec:intro}

Lattice rules are one important branch of quasi-Monte Carlo (QMC) methods traditionally applied in the approximation of multivariate integrals over the unit cube $[0,1]^d$ with periodic integrand functions, see, e.g., the classical references \cite{N1992,SJ1994}.
In many applications integration over $\R^d$ arises.
In this paper we study how to apply lattice rules in this setting with the aim of obtaining higher-order convergence, i.e., to obtain error bounds which show that the error can be bounded by $O(n^{-\alpha})$ for $\alpha > 1$, with $n$ the number of integrand evaluations.
Typically the smoothness of the integrand is related to $\alpha$ by the number of derivatives that exist (in all directions) and are, e.g., $L_2$ integrable (see further for details).
More specifically, we are interested in using lattice rules to approximate the integral by first truncating to a box $[\bsa,\bsb] := \prod_{j=1}^d [a_j, b_j]$, with all $a_j \le b_j$, and then mapping the lattice rule to the box:
\[
 \int_{\R^d} f(\bsx) \rd \bsx
 \quad\approx\quad
 \int_{[\bsa,\bsb]} f(\bsx) \rd \bsx
 \quad\approx\quad
 \frac{\prod_{j=1}^d (b_j-a_j)}{n} \sum_{i=1}^n f(\bsp_i^{[\bsa,\bsb]})
 .
\]
The quadrature nodes $\bsp_i^{[\bsa,\bsb]}$ are obtained from a rank-$1$ lattice point set $\Lambda(\bsz,n)$, defined by two parameters, the number of points $n$ and the generating vector $\bsz \in \Z_n^d$,
\[
 \Lambda(\bsz,n)
 :=
 \left\{ \bsp_i:= \frac{i\bsz}{n} \bmod{1} \right\}_{i=1}^n
 \subset
 [0,1]^d
 ,
\]
and the nodes are mapped from the unit cube $[0,1]^d$ to the box $[\bsa,\bsb]$ by
\[
 p_{i,j}^{[\bsa,\bsb]}
 :=
 (b_j - a_j) \, p_{i,j} + a_j
\]
where $p_{i,j}^{[\bsa,\bsb]}$ is the $j$-th component of $\bsp_i^{[\bsa,\bsb]}$, and likewise, $p_{i,j}$ is the $j$-th component of $\bsp_i$.
By the triangle inequality we have
\begin{multline}\label{eq:split-error-numerics}
  \left| \int_{\R^d} f(\bsx) \rd \bsx  - \frac{\prod_{j=1}^d (b_j-a_j)}{n}\sum_{i=1}^n f(\bsp_i^{[\bsa,\bsb]}) \right|
  \\\le
  \left| \int_{\R^d} f(\bsx) \rd \bsx  - \int_{[\bsa,\bsb]} f(\bsx) \rd \bsx \right|
  +
  \left| \int_{[\bsa,\bsb]} f(\bsx) \rd \bsx - \frac{\prod_{j=1}^d (b_j-a_j)}{n}\sum_{i=1}^n f(\bsp_i^{[\bsa,\bsb]}) \right|
  ,
\end{multline}
i.e., the total error is bounded by the sum of the truncation error and the integration error.
We will select the box $[\bsa,\bsb]$ and the number of lattice points $n$ to balance both errors.
Our major concern however will be how to analyse the error of integrating the non-periodic integrand on the box $[\bsa,\bsb]$ by a lattice rule.

Different approaches exist in the literature to apply quasi-Monte Carlo methods designed for integration over the unit cube $[0,1]^d$ to integration over $\R^d$, e.g., \cite{DILP2018,NK2014,NN2017,KSWW2010,IL2015,NN2020,KN2016}.
In \cite{NN2017} analytic functions with exponential decay in the physical and in the Fourier space are studied and exponential convergence is obtained using a regular grid with appropriate scaling in each direction.
In \cite{DILP2018} integration with respect to the Gaussian measure is considered where integrand functions are in some unanchored Sobolev spaces. Therein, higher-order convergence is achieved using linearly transformed higher-order digital nets on a box. Similarly, in \cite{NN2020} higher-order convergence is achieved with interlaced polynomial lattice rules where a different type of function spaces, namely anchored Sobolev spaces, are considered. In \cite{NK2014} randomly shifted lattice rules are studied in combination with an inverse mapping of the considered cumulative density function, and first order convergence is achieved.

In the present paper, we consider linearly transformed lattice rules on a box of increasing size, without random shifting or inverse mapping, and we derive explicit conditions to obtain higher-order convergence.

Apart from the problem of adjusting from the unit cube $[0,1]^d$ to $\R^d$, we also need to handle that the integrand function $f$ on the box $[\bsa,\bsb]$ is non-periodic.
The analysis of lattice rules for numerical integration is typically done in a periodic setting with absolutely converging Fourier series expansions.
In this setting the smoothness $\alpha$ as previously defined, means that the function values and its derivative values (up to a certain order) will match on the  boundaries of the unit cube.
To apply lattice rules in the non-periodic setting on the unit cube $[0,1]^d$ one can apply so-called ``periodizing transformations'' \cite{SJ1994,S1993,KSW2007}.
This method applies a variable substitution to the integral, dimension-by-dimension, to end up with a periodic integrand function while preserving the value of the integral. The problem with this strategy is that this ``blows up the norm'', as noted by many researchers \cite{H2002,GSY2019,Z1972,K1963}.
A second approach is known as the ``method of Bernoulli polynomials'' \cite{Z1972,K1963,SJ1994}. In this method we add Bernoulli polynomials to the original integrand in such a way that function values (and derivative values) match at the boundaries of the unit cube, making the integrand periodic, while preserving the value of the integral, since those Bernoulli polynomials integrate to zero.
As pointed out by many researchers \cite{Z1972,K1963,SJ1994,BH1992}, this method does not blow up the norm, but the amount of terms that need to be added increases exponentially (by correcting the value of the original integrand on all possible boundaries of the hypercube, and, by the need to analytically calculate the derivatives of the original integrand, which might also grow exponentially by, e.g., the need to use the chain rule). This is probably the reason why this last method appears to be not used  as much.
Both the periodizing transformation and the method of Bernoulli polynomials can transform a non-periodic function of smoothness $\alpha$ into a periodic function with the same smoothness, hence achieving $O(n^{-\alpha})$.
Since the periodizing transformation might blow up the norm and the method of Bernoulli polynomials could increase the cost of evaluating the integrand, the recent literature on lattice rules mostly applies two other techniques: ``random shifting'' and the ``tent-transform'', which can also be combined \cite{H2002,DNP2014,CKNS2016,GSY2019}.
This method is much easier and does not have the previously mentioned defects, but the convergence is limited to $O(n^{-1})$ and $O(n^{-2})$.

Here we take yet another approach which has not been studied yet.
First we analyse what the effect is of integrating a non-periodic function with a lattice rule in terms of a measure of how non-periodic the function is.
We can measure the non-periodicity of a function by quantifying how much the function and its derivatives (up to a certain order) differ at the boundaries of the domain.
We derive an explicit error bound taking this difference into account.
Since the method of Bernoulli polynomials actually corrects this non-periodicity, we use it in the theoretical analysis to get hold of the non-periodicity.
The advantage of this approach is that we never have to actually construct the modified integrand.
Of course, since, this defect is constant, such an approach would not work on the unit cube, or any fixed box.
However, in the setting of integration over $\R^d$ we can use the fact that as we increase the boundaries of our box $[\bsa,\bsb]$ to cover more and more of $\R^d$, then the function will differ less and less on the boundaries of our box.
If we demand a certain decay of the function values and its derivatives up to a certain order, then our integrand function becomes more and more periodic as our box increases.
It turns out that in our setting, where the integrand functions are from a certain reproducing kernel Hilbert space which we call an unanchored Sobolev space, the modified integrand which can be obtained by applying the Bernoulli polynomial method, is actually the orthogonal projection of the original function from the unanchored Sobolev space to an associated Korobov space, and this lets us apply the lattice rule without blowing up the norm, see Proposition~\ref{prop:projection} and Proposition~\ref{prop:projection-ab}.
Along the way we also show a new representation for functions in our Sobolev spaces, see Proposition~\ref{prop:representations} and Proposition~\ref{prop:representations-ab}.
Using the orthogonal projection we can thus decompose the integrand into a periodic part and a non-periodic part where those two parts are orthogonal in the unanchored Sobolev space.
Hence, for integration over $\R^d$, by using this orthogonal projection we can divide the analysis of the error into three parts: the truncation error, the integration error of the periodic part, and the projection error:
\begin{multline}\label{eq:split-error-analysis}
  \left| \int_{\R^d} f(\bsx) \rd \bsx  - \frac{\prod_{j=1}^d (b_j-a_j)}{n}\sum_{i=1}^n f(\bsp_i^{[\bsa,\bsb]}) \right|
  \\\hspace*{-5.4cm}\le %% FIXME: sorry, manually formatting
  \left| \int_{\R^d} f(\bsx) \rd \bsx  - \int_{[\bsa,\bsb]} f(\bsx) \rd \bsx \right|
  \\+
  \left| \int_{[\bsa,\bsb]} F^{[\bsa,\bsb]}(\bsx) \rd \bsx - \frac{\prod_{j=1}^d (b_j-a_j)}{n}\sum_{i=1}^n F^{[\bsa,\bsb]}(\bsp_i^{[\bsa,\bsb]}) \right|
  \\
  +
  \left| \frac{\prod_{j=1}^d (b_j-a_j)}{n}\sum_{i=1}^n \left( F^{[\bsa,\bsb]}(\bsp_i^{[\bsa,\bsb]}) -  f(\bsp_i^{[\bsa,\bsb]}) \right) \right|
  ,
\end{multline}
where $F^{[\bsa,\bsb]}$ is the projected part of $f$ onto the Korobov space on the box $[\bsa,\bsb]$ and we have $\int_{[\bsa,\bsb]} f(\bsx) \rd \bsx = \int_{[\bsa,\bsb]} F^{[\bsa,\bsb]}(\bsx) \rd \bsx$.
To control the projection error, we will ask the integrand to become more and more periodic as the box increases.
The speed of becoming periodic will be carefully chosen such that the convergence of the total error is not deteriorated by the non-periodicity.
It is important to remark that the speed of becoming periodic is governed by the decay of the integrand function, and its partial derivatives, towards infinity, see Proposition~\ref{prop:projection-error}, and this same decay on the integrand function itself is also needed for the truncation error, see Proposition~\ref{prop:truncation}.

Combining Proposition~\ref{prop:projection-error} and Proposition~\ref{prop:truncation} with the integration error of the lattice rule, see Lemma~\ref{lem:cuberror-ab}, will lead us to our main result in Theorem~\ref{thm:total-error}.
As a simple example we state the following two results which follow from Theorem~\ref{thm:total-error}.
For the particular definition of the function spaces we refer to~\eqref{eq:Rd-norm} and Section~\ref{sec:RKHS}.
The lattice rule we refer to should be a ``good'' lattice rule for the standard Korobov space on the unit cube, see Lemma~\ref{lem:cuberror-ab}.

\begin{theorem1a}
Let $g$ have mixed smoothness $\alpha$ on any finite box $[\bsa,\bsb] \subset \R^d$ with all $|g^{(\bstau)}|$ for $\bstau \in \{0,\ldots,\alpha-1\}^d$ bounded by a polynomial and let $p_s$ be the product logistic density with scale parameter $s$, then the function $f(\bsx) = g(\bsx) \, p_s(\bsx)$ has mixed smoothness~$\alpha$ on $\R^d$ and satisfies our exponential decay condition~\eqref{eq:exp-decay} with $q=1$ and $\beta=1/s$.
  To approximate
  \begin{align*}
    \int_{\R^d} \underbrace{g(\bsx) \, p_s(\bsx)}_{= f(\bsx)} \rd{\bsx},
  \end{align*}
  our algorithm will select boxes
  \begin{align*}
    [-a,a]^d \quad \text{with} \quad a = a(n) = \alpha s \log(n)
  \end{align*}
  for a lattice rule with $n$ points.
  The error will converge like $O(n^{-\alpha + \epsilon})$ for arbitrary small $\epsilon > 0$.
\end{theorem1a}

\begin{theorem1b}
Let $g$ have mixed smoothness $\alpha$ on any finite box $[\bsa,\bsb] \subset \R^d$ with all $|g^{(\bstau)}|$ for $\bstau \in \{0,\ldots,\alpha-1\}^d$ bounded by a polynomial and let $\phi$ be the product normal density with variance $\sigma^2$, then the function $f(\bsx) = g(\bsx) \, \phi(\bsx)$ has mixed smoothness~$\alpha$ on $\R^d$ and satisfies our exponential decay condition~\eqref{eq:exp-decay} with $q=2$ and $\beta = 1/(2\sigma^2)$.
  To approximate
  \begin{align*}
    \int_{\R^d} \underbrace{g(\bsx) \, \phi(\bsx)}_{= f(\bsx)} \rd{\bsx},
  \end{align*}
  our algorithm will select boxes
  \begin{align*}
    [-a,a]^d \quad \text{with} \quad a = a(n) = \sqrt{2 \alpha \sigma^2 \log(n)}
  \end{align*}
  for a lattice rule with $n$ points.
  The error will converge like $O(n^{-\alpha + \epsilon})$ for arbitrary small $\epsilon > 0$.
\end{theorem1b}

The algorithm itself takes the super straightforward form presented in Algorithm~\ref{alg:scaled-lattice-rule}.
We present numerical examples for the two cases above in Section~\ref{sec:numerics}.
Similar results for a polynomial decay of the integrand function $f$ can also be found in Theorem~\ref{thm:total-error}.

Current research on QMC methods is often concerned with very high dimensional integrals and is interested in function space settings in which approximation of such integrals is tractable.
Tractability is mostly achieved by considering a sequence of positive weights which model the importance of different subsets of dimensions, see e.g., \cite{DSWW2004,SW1998,SW2001} and \cite[Section~4]{DKS2013}.
Our current method does not use such weighted spaces and therefore suffers from the curse of dimensionality.
We focus here on the problem of obtaining higher-order convergence for integration on $\R^d$ and leave the question of tractability for future work.

The rest of the present paper is organized as follows.
In Section~\ref{sec:RKHS} we introduce the needed reproducing kernel Hilbert spaces on the unit cube and also on a general box $[\bsa,\bsb] = \prod_{j=1}^d [a_j,b_j]$.
Section~\ref{sec:BPM-projection} introduces the orthogonal projection from our unanchored Sobolev space to the associated Korobov space.
Section~\ref{sec:error-analysis} gives the result for the total integration error using lattice rules for integration on $\R^d$. 
In Section~\ref{sec:numerics} we demonstrate our method with numerical examples.
Finally, Section~\ref{sec:conclusion} concludes this paper.

Throughout this paper,
$\R$ denotes the set of all real numbers, $\Z$ denotes the set of all integers, $\Z_n := \{0,1,\ldots,n-1 \}$ is the set of integers modulo $n$, $\N:=\{1,2, \ldots \}$ is the set of all natural numbers. By $\imagunit$ we mean the imaginary unit.
For $\setw \subseteq \{1,\ldots,d\}$ we write $[\bsa_\setw,\bsb_\setw] := \prod_{j\in \setw} [a_j,b_j]$ and $[\bsa_{-\setw},\bsb_{-\setw}]
= \prod_{j \in -\setw} [a_j,b_j]
:= \prod_{j \notin \setw} [a_j,b_j]
:= \prod_{j \in \{1,\ldots,d\} \setminus \setw}
[a_j,b_j]$ with $-\setw := \{1,\ldots,d\} \setminus \setw$. Similarly $\bsx_\setw := (x_j)_{j\in\setw}$ and $\bsx_{-\setw} := (x_j)_{j\notin\setw}$.

\section{Reproducing kernel Hilbert spaces}\label{sec:RKHS}

We consider integration for functions in unanchored Sobolev spaces.
In the context of high-dimensional integration and QMC methods these are basically tensor products of one-dimensional spaces where derivatives up to a certain order $\alpha$ are $L_2$ integrable; and where in the multi-variate setting these derivatives can be taken up to order $\alpha$ in each dimension simultaneously.
In different literature, such spaces are often said to have ``dominating mixed-smoothness''.
In our analysis, we connect this space to the Korobov spaces, which are similar function spaces of ``periodic functions'' and where it is known that lattice rules can achieve the optimal order of convergence \cite{N1992,SJ1994}.
We first introduce those spaces on the unit cube $[0,1]^d$ and then on a box $[\bsa,\bsb] \subset \R^d$.

In the classical references the function spaces are mostly Banach spaces.
Here we want to make use of the connection between the Hilbert spaces $L_2$ and $\ell_2$ with respect to Fourier coefficients by making use of the Parseval theorem to be able to express $L_2$ norms of derivatives as $\ell_2$ norms of the Fourier coefficients of the original function.
Our function spaces are therefore reproducing kernel Hilbert spaces (RKHS).
Secondarily, since we are now in a Hilbert space setting, we can later use the idea of an orthogonal projection and its complement to analyze the effect of integrating a non-periodic function by a lattice rule.

These function spaces are tensor-products of one dimensional spaces.
For the one-dimensional spaces we will make use of the fact that if $f$ is \emph{absolutely continuous} on $[a,b]$ then $f$ has a derivative $f'$ almost everywhere that is Lebesgue integrable on $[a,b]$, $-\infty < a < b < \infty$, and for which
\begin{align*}
  f(x)
  &=
  f(a) + \int_a^x f'(t) \rd{t}
  \qquad
  \text{for all } x \in [a,b]
  ,
\end{align*}
and in particular $\int_a^b f'(t) \rd{t} = f(b) - f(a)$, see~\cite{R1976}.
As a shorthand we sometimes write $\int_a^b f'(t) \rd{t} = \left. f(t) \right|_{t=a}^b = \left[ f(t) \right]_{t=a}^b = f(b) - f(a)$.
The one-dimensional Sobolev spaces we describe here will consist of $f$ for which $f^{(\tau)}$ will be absolutely continuous for all $\tau = 0, \ldots, \alpha-1$ and $f^{(\alpha)}$ will be $L_2([a,b])$ integrable.
What remains is to choose an inner product, which we will do in the next sections.

\subsection{Some properties of Bernoulli polynomials}\label{sec:bernoulli}

We recall some well known properties of Bernoulli polynomials, see, e.g., \cite{NIST:DLMF}, which will be denoted by $B_\tau$, where $\tau$ is the degree of the polynomial.
The first few Bernoulli polynomials are $B_0(x) = 1$, $B_1(x) = x - 1/2$, $B_2(x) = x^2 - x + 1/6$, $B_3(x) = x^3 - (3/2) x^2 + (1/2) x$, \ldots\ 
We are interested in their properties on the interval $[0,1]$.
We also define scaled Bernoulli polynomials for usage on arbitrary intervals $[a,b]$, with $a < b$, with equivalent properties.
Those will be denoted by $B_\tau^{[a,b]}$.

The following properties are well known, for $\tau, \tau' \in \{0,1,2,\ldots\}$,
\begin{align*}
  \int_0^1 B_\tau(x) \rd{x}
  &=
  \begin{cases}
    1, & \text{if $\tau = 0$}, \\
    0, & \text{if $\tau \ne 0$},
  \end{cases}
  &
  \frac{\rmd^{\tau'}}{\rmd x^{\tau'}} \frac{B_\tau(x)}{\tau!}
  =
  \frac{B^{(\tau')}_\tau(x)}{\tau!}
  &=
  \begin{cases}
    0,                                       & \text{if $\tau' > \tau$}, \\[2mm]
    \displaystyle
    \frac{B_{\tau-\tau'}(x)}{(\tau-\tau')!}, & \text{if $\tau' \le \tau$} .
  \end{cases}
\end{align*}
On the interval $[0,1]$ the Bernoulli polynomials have the following Fourier expansion which we will use to define the ``periodic Bernoulli polynomials'' denoted by $\tilB_\tau$ as follows
\begin{align*}
  \frac{\tilB_\tau(x)}{\tau!}
  &:=
  \frac{-1}{(2\pi\rmi)^\tau}
  \sum_{0 \ne h \in \Z} \frac{\exp(2\pi\rmi\,h x)}{h^\tau}
  &
  \begin{cases}
    \text{for $\tau=2,3,\ldots$ and $(x \bmod 1) \in [0,1]$, or}, \\
    \text{for $\tau=1$ and $(x \bmod 1) \in (0,1)$}.
  \end{cases}
\end{align*}
For $\tau=2,3,\ldots$ we have $\tilB_\tau(x) = B_\tau(x)$ when $x \in [0,1]$ and for $\tau=1$ we have $\tilB_1(x) = B_1(x)$ when $x \in (0,1)$.
From here it follows that for $\tau=1,2,3,\ldots$,
\begin{align}\label{eq:Btilde-integral}
  \int \frac{\tilB_\tau(x)}{\tau!} \rd x
  &=
  \frac{-1}{(2\pi\rmi)^{\tau+1}} \sum_{0 \ne h \in \Z} \frac{\exp(2\pi\rmi\,h x)}{h^{\tau+1}}
  =
  \frac{\tilB_{\tau+1}(x)}{(\tau+1)!}
\end{align}
and, again for $\tau=1,2,3,\ldots$,
\begin{align*}
  \int \frac{\tilB_\tau(z-y)}{\tau!} \rd y
  &=
  \frac{1}{(2\pi\rmi)^{\tau+1}} \sum_{0 \ne h \in \Z} \frac{\exp(2\pi\rmi\,h (z-y))}{h^{\tau+1}}
  =
  \frac{-\tilB_{\tau+1}(z-y)}{(\tau+1)!}
  .
\end{align*}
We also have the following property, for $\tau=0,1,2,\ldots$,
\begin{align*}
  \tilB_\tau(x-y)
  =
  (-1)^\tau \, \tilB_\tau(y-x)
  ,
\end{align*}
which follows immediately from the symmetry $B_\tau(x) = (-1)^\tau \, B_\tau(1-x)$ for $x \in [0,1]$ and $(1 - (x - y)) \bmod 1 = (y - x) \bmod 1$.

We now define scaled Bernoulli polynomials on the interval $[a,b]$, $a < b$, making sure that the same properties as for the standard Bernoulli polynomials on the interval $[0,1]$ hold.
Define, for $\tau = 0,1,2,\ldots$,
\begin{align}\label{eq:def-Bab}
  B^{[a,b]}_\tau(x)
  &:=
  (b - a)^{\tau-1} \, B_\tau((x-a)/(b-a))
  .
\end{align}
Note that $B^{[a,b]}_0(x) = 1 / (b-a)$ and $B^{[a,b]}_1(x) = B_1((x-a)/(b-a))$ such that $B^{[a,b]}_1(a) = B_1(0) = -1/2$ and $B^{[a,b]}_1(b) = B_1(1) = 1/2$. These properties will be essential, e.g., in the proof of Lemma~\ref{lem:bernpartintab}.
It can be easily verified that
\begin{align*}
  \int_a^b B^{[a,b]}_\tau(x) \rd{x}
  &=
  \begin{cases}
    1, & \text{if $\tau = 0$}, \\
    0, & \text{if $\tau \ne 0$},
  \end{cases}
\intertext{and}
  \frac{\rmd^{\tau'}}{\rmd x^{\tau'}} \frac{B^{[a,b]}_\tau(x)}{\tau!}
  =
  \frac{{B^{[a,b]}_\tau}^{(\tau')}(x)}{\tau!}
  &=
  \begin{cases}
    0,                                       & \text{if $\tau' > \tau$}, \\[2mm]
    \displaystyle
    \frac{B^{[a,b]}_{\tau-\tau'}(x)}{(\tau-\tau')!}, & \text{if $\tau' \le \tau$} .
  \end{cases}
\end{align*}
For the Fourier series on the interval $[a,b]$ we fix the basis functions, for $h \in \Z$, to be
\begin{align}\label{eq:exp-ab}
  \varphi^{[a,b]}_h(x)
  &:=
  \frac{\exp(2\pi\rmi \, h (x-a)/(b-a))}{\sqrt{b-a}}
  ,
\end{align}
such that they are orthogonal and normalized with respect to the standard $L_2$-inner product on $[a,b]$, i.e., for $h,h' \in \Z$, we have
\begin{align*}
  \int_a^b \varphi^{[a,b]}_h(x) \, \overline{\varphi^{[a,b]}_{h'}(x)} \rd x
  &=
  \begin{cases}
  1, & \text{if $h=h'$}, \\
  0, & \text{if $h\ne h'$}.
  \end{cases}
\end{align*}
We can now also define periodic Bernoulli polynomials on the interval $[a,b]$ as follows
\begin{align*}
  \frac{\tilB^{[a,b]}_\tau(x)}{\tau!}
  &:=
  \frac{-(b-a)^{\tau-1/2}}{(2\pi\rmi)^\tau}
  \sum_{0 \ne h \in \Z} \frac{\varphi^{[a,b]}_h(x)}{h^\tau}
  &
  \begin{cases}
    \text{for $\tau=2,3,\ldots$ and $x \in \mathbb{T}[a,b]$, or}, \\
    \text{for $\tau=1$ and $x \in \mathbb{T}(a,b)$},
  \end{cases}
\end{align*}
in which the notation $x \in \mathbb{T}[a,b]$ means that the value of $x$ is identified with its value on the closed torus $[a,b]$, likewise for $x \in \mathbb{T}(a,b)$ where $x$ is identified with its value on the open torus $(a,b)$.
Obviously for $\tau=2,3,\ldots$ we have $\tilB^{[a,b]}_\tau(x) = B^{[a,b]}_\tau(x)$ when $x \in [a,b]$ and for $\tau=1$ we have $\tilB^{[a,b]}_1(x) = B^{[a,b]}_1(x)$ when $x \in (a,b)$.
The periodicity implies that
\begin{align*}
  \tilB^{[a,b]}_\tau(x)
  &=
  \tilB^{[a,b]}_\tau(x + k \, (b-a))
  \qquad
  \forall x \in \R,\;
  \forall k \in \Z
  .
\end{align*}
It follows that for $\tau=1,2,3,\ldots$,
\begin{align*}
  \int \frac{\tilB^{[a,b]}_\tau(x)}{\tau!} \rd x
  &=
  \frac{-(b-a)^{\tau-1/2+1}}{(2\pi\rmi)^{\tau+1}} \sum_{0 \ne h \in \Z} \frac{\varphi^{[a,b]}_h(x)}{h^{\tau+1}}
  =
  \frac{\tilB^{[a,b]}_{\tau+1}(x)}{(\tau+1)!}
\end{align*}
and, again for $\tau=1,2,3,\ldots$,
\begin{align*}
  \int \frac{\tilB^{[a,b]}_\tau(z-y)}{\tau!} \rd y
  &=
  \frac{-(b-a)^{\tau-1/2+1}}{(2\pi\rmi)^{\tau+1}} \sum_{0 \ne h \in \Z} \frac{\varphi^{[a,b]}_h(z-y)}{h^{\tau+1}}
  =
  \frac{-\tilB^{[a,b]}_{\tau+1}(z-y)}{(\tau+1)!}
  .
\end{align*}
We also have the following property, for $\tau=0,1,2,\ldots$,
\begin{align*}
  \tilB^{[a,b]}_\tau(x-y+a)
  &=
  (-1)^\tau \, \tilB^{[a,b]}_\tau(b-(x-y)-(b-a))
  =
  (-1)^\tau \, \tilB^{[a,b]}_\tau(y-x+a)
  ,
\end{align*}
which followed immediately from the symmetry $B^{[a,b]}_\tau(a+t) = (-1)^\tau \, B^{[a,b]}_\tau(b-t)$ for $t \in [0,b-a]$ and the periodicity with period $(b-a)$.

The maximal magnitude of the scaled Bernoulli polynomials on the interval $[a,b]$ can be bounded by
\begin{align*}
  \forall x \in [a,b],
  \; \forall \tau \ge 2
  :
  \qquad
  |B^{[a,b]}_\tau(x)|
  &\le
  (b-a)^{\tau-1} \frac{2 \, \tau!}{(2\pi)^\tau}
  \begin{cases}
    \zeta(\tau), & \text{if $\tau \equiv 0 \pmod{2}$}, \\
    1          , & \text{otherwise},
  \end{cases}
\end{align*}
where $\zeta(\tau) := \sum_{k \ge 1} k^{-\tau}$ is the Riemann zeta function.
For the derivation of this bound we refer to~\cite{L1940}. 
Note that this means that, for any interval $[a, b]$ with $a < b$,
\begin{align}\label{eq:BerBound}
  \forall x \in [a,b],
  \; \forall \tau \ge 1
  :
  \qquad
  \frac{|B^{[a,b]}_\tau(x)|}{\tau!}
  \le
  \frac{(b-a)^{\tau-1}}{2}
  .
\end{align}

\subsection{RKHSs on the unit cube}

With $\calH(K)$ we denote the reproducing kernel Hilbert space which corresponds to a reproducing kernel $K: [0,1]^d \times [0,1]^d \to \R$.
By $\|f\|_K$ and $\langle f , g \rangle _K$ we mean the corresponding norm of $f \in \calH(K)$ and inner product of $f$ and $g$ for $f, g \in \calH(K)$. 
We always have $\|f\|_K^2 = \langle f , f \rangle _K$.
The key property of a reproducing kernel Hilbert space is that the kernel reproduces the function values by means of the inner product, i.e.,
\begin{align*}
  \langle f, K(\cdot, \bsy) \rangle_K
  &=
  f(\bsy)
  \qquad
  \forall \bsy \in [0,1]^d
  \text{ and }
  \forall f \in \calH(K)
  .
\end{align*}

We first define our non-periodic function space on the unit cube.
For the derivatives of a function $f$ we use the notation
\[
 f^{(\bstau)}(\bsx)
 =
 \frac{\partial^{\tau_1+\cdots+\tau_d} f(\bsx)}{\partial x_1^{\tau_1} \cdots \partial x_d^{\tau_d}}
 .
\]
The spaces on the unit cube which we introduce here are well known and can also be found in, e.g., \cite{NW2008}. We introduce them to be able to explicitly see the parallels and differences with the spaces on a box which we will define in the next subsection.
We remind the reader that we use the shorthands $[\bszero_\setw,\bsone_\setw] = \prod_{j \in \setw} [0,1] = [0,1]^{|\setw|}$ and $[\bszero_{-\setw},\bsone_{-\setw}] = \prod_{j \notin \setw} [0,1] = [0,1]^{d-|\setw|}$.

\begin{definition}\label{def:Sob}
For $\alpha \in \N$ the space $\calH(\KSob_\alpha)$ is an unanchored Sobolev space on the unit cube $[0,1]^d$, which is a reproducing kernel Hilbert space with inner product
\begin{align*}
    \langle f , g \rangle _{\KSob_{\alpha,d}}
    :=
    \sum_{ \substack{ \bstau \in \{0,\ldots,\alpha \}^d \\ \setw := \{j: \tau_j=\alpha \}} }  
    \int_{[\bszero_\setw,\bsone_\setw]} 
    \left(\int_{[\bszero_{-\setw},\bsone_{-\setw}]} f^{(\bstau)} (\bsx) \rd \bsx_{-\setw} 
    \right)
    \left(\int_{[\bszero_{-\setw},\bsone_{-\setw}]} g^{(\bstau)} (\bsx) \rd \bsx_{-\setw} 
    \right)
    \rd \bsx_\setw
    ,
\end{align*}
and reproducing kernel
\begin{align*}
 \KSob_{\alpha,d} (\bsx,\bsy)
 &:=
 \prod_{j=1}^d \left( 1 + \sum_{\tau_j=1}^\alpha \frac{B_{\tau_j}(x_j)}{\tau_j!} \, \frac{B_{\tau_j}(y_j)}{\tau_j!} + (-1)^{\alpha+1} \frac{\tilB_{2\alpha}(x_j-y_j)}{(2\alpha)!} \right)
 .
\end{align*}
\end{definition}

Functions in this space have mixed partial derivatives up to order $\alpha-1$ in each variable which are absolutely continuous and of which the highest derivatives of order $\alpha$ are $L_2$ integrable.
We will consider next a function space with the same properties, but in addition ask that the functions are ``periodic''.

All functions in the periodic space will allow to be expressed as absolutely converging Fourier series (which implies pointwise equality of the Fourier series)
\begin{align*}
  f(\bsx)
  &=
  \sum_{\bsh \in \Z^d} \widehat{f}(\bsh) \, \exp(2\pi\rmi \, \bsh \cdot \bsx)
  ,
  &
  \widehat{f}(\bsh)
  &:=
  \int_{[0,1]^d} f(\bsx) \, \exp(-2\pi\rmi \, \bsh \cdot \bsx) \rd{\bsx}
  .
\end{align*}
We define the periodic function space in such a way that the norms of the periodic functions are equal in both spaces for the same integer smoothness.

\begin{definition}\label{def:Kor}
  For $\alpha > 1/2$ the space $\calH(\KKor_{\alpha,d})$ is a Korobov space on the unit cube $[0,1]^d$, which is a reproducing kernel Hilbert space with inner product 
  \begin{align*}
    \langle f , g \rangle_{\KKor_{\alpha,d}}
    &:=
    \sum_{\bsh\in\Z^d} \widehat{f}(\bsh) \, \overline{\widehat{g}(\bsh)} \,\left[r_{\alpha,d}(\bsh)\right]^2
    ,
  \end{align*}
  and reproducing kernel
  \begin{align}\label{eq:KKor}
    \KKor_{\alpha,d} (\bsx,\bsy)
    &:=
    \sum_{\bsh \in \Z^d} \frac{\exp(2\pi\rmi \, \bsh\cdot(\bsx-\bsy))}{\left[r_{\alpha,d}(\bsh)\right]^2}
    =
    \prod_{j=1}^d \left( 1 +  (-1)^{\alpha+1} \frac{\tilB_{2\alpha}(x_j-y_j)}{(2\alpha)!} \right)
    ,
  \end{align}
  where the second form holds when $\alpha \in \N$, and with
\begin{align*}
  r_{\alpha,d}(\bsh)
  &:=
  \prod_{j=1}^d r_\alpha(h_j)
  ,
  &
  r_\alpha(h_j)
  &:=
  \begin{cases}
    1,             & \text{for $h_j = 0$},   \\
    |2\pi \, h_j|^\alpha, & \text{for $h_j \ne 0$}.
  \end{cases}
\end{align*}  
\end{definition}

For $\alpha \in \N$ the normalization in $r_{\alpha,d}$ is chosen in such a way that the inner product and norm coincides with that of the unanchored Sobolev space $\calH(\KSob_{\alpha,d})$ of smoothness $\alpha$ for functions in $\calH(\KKor_{\alpha,d})$.
Functions in the Korobov space $\calH(\KKor_{\alpha,d})$ have mixed partial derivatives up to order $\alpha-1$ in each variable which are absolutely continuous, and of which derivatives of order $\alpha$ are $L_2$ integrable, and, additionally, the derivatives up to order $\alpha-1$ have matching values on the boundaries, i.e., periodic boundary conditions,
\begin{align*}
  \forall \bstau \in \{0,\ldots,\alpha-1\}^d, \forall j=1,\ldots,d :
  \bigl. f^{(\bstau)}(\bsx) \bigr|_{x_j=1}
  =
  \bigl. f^{(\bstau)}(\bsx) \bigr|_{x_j=0}
  ,
\end{align*}
and, as a consequence,
\begin{multline}\label{eq:periodicity-integrals}
  \forall \bstau \in \{1,\ldots,\alpha\}^d,
  \forall \setv \subseteq \{1,\ldots,d\} \setminus \{ j : \tau_j = \alpha \} , \setv \ne \emptyset ,
  \forall j \in \setv :
  \\
  \int_{[0,1]^{|\setv|}} f^{(\bstau)}(\bsx) \rd\bsx_\setv
  =
  \int_{[0,1]^{|\setv|-1}} \left[
  \bigl. f^{(\bstau-1)}(\bsx) \bigr|_{x_j=1}
  -
  \bigl. f^{(\bstau-1)}(\bsx) \bigr|_{x_j=0}
  \right] \rd\bsx_{\setv \setminus \{j\}}
  =
  0
  .
\end{multline}
For $\bstau \in \{0,\ldots,\alpha\}^d$ the mixed partial derivatives can be written in terms of Fourier series with modified coefficients:
\begin{align*}
  f^{(\bstau)}(\bsx)
  &=
  \sum_{\substack{\bsh \in \Z^d \\ h_j \ne 0 \text{ for } \tau_j \ne 0}} \left[ \prod_{j=1}^d (2\pi\rmi \, h_j)^{\tau_j} \right] \, \widehat{f}(\bsh) \, \exp(2\pi\rmi \, \bsh \cdot \bsx)
  &&
  (\text{a.e.\ when any } \tau_j = \alpha)
  ,
\end{align*}
which is $L_2$ integrable since $\|f\|_{\KKor_{\alpha,d}}^2 < \infty$ for $f \in \calH(\KKor_{\alpha,d})$.
We do remark that as soon as any of the $\tau_j = \alpha$ then the above equality holds only in the almost everywhere sense and we then have a so-called weak derivative. This is of no concern to us as we do not need point evaluation of the highest order derivatives in any of our derivations.
It is obvious that the Korobov space is a subspace of the Sobolev space and using the previous periodicity properties in the integrals in the inner product of the Sobolev space (for the $\tau_j \in \{1,\ldots,\alpha-1\}$) it can be seen that for $f, g \in \calH(\KKor_{\alpha,d})$ the inner product for $\calH(\KKor_{\alpha,d})$ with $\alpha \in \N$ can be written in two ways: using Fourier series or using derivatives
\begin{align*}
 \langle f , g \rangle_{\KKor_{\alpha,d}}
 &=
 \sum_{\bsh\in\Z^d} \widehat{f}(\bsh) \; \overline{\widehat{g}(\bsh)}  \; \left[r_{\alpha,d}(\bsh)\right]^2
 \\
  &=
   \sum_{ \substack{ \bstau \in \{0,\alpha \}^d \\ \setw := \{j: \tau_j=\alpha \}} }  
    \int_{[\bszero_\setw,\bsone_\setw]} 
    \left(\int_{[\bszero_{-\setw},\bsone_{-\setw}]} f^{(\bstau)} (\bsx) \rd \bsx_{-\setw} 
    \right)
    \left(\int_{[\bszero_{-\setw},\bsone_{-\setw}]} g^{(\bstau)} (\bsx) \rd \bsx_{-\setw} 
    \right)
    \rd \bsx_\setw
  \\&\hphantom{:}=
  \langle f , g \rangle_{\KSob_{\alpha,d}}
  ,
\end{align*}
where the sum in the second line is now only over $\bstau$ with components $0$ or $\alpha$ because of the periodicity of the functions $f$ and $g$.

\subsection{RKHSs on a box}

We now want to define a function space for non-periodic functions on a box $[\bsa,\bsb]$ where the inner product is a direct extension of the one over the unit cube from the previous section.

\begin{definition}\label{def:Sob-ab}
For $\alpha \in \N$ the space $\calH(\KSobab_\alpha)$ is an unanchored Sobolev space on the box $[\bsa,\bsb]$, which is a reproducing kernel Hilbert space with inner product
\begin{multline}\label{eq:Sob-ab-innerproduct}
    \langle f , g \rangle _{\KSobab_{\alpha,d}}
    \\:=
    \sum_{ \substack{ \bstau \in \{0,\ldots,\alpha \}^d \\ \setw := \{j: \tau_j=\alpha \}} }  
    \int_{[\bsa_\setw,\bsb_\setw]} 
    \left(\int_{[\bsa_{-\setw},\bsb_{-\setw}]} f^{(\bstau)} (\bsx) \rd \bsx_{-\setw} 
    \right)
    \left(\int_{[\bsa_{-\setw},\bsb_{-\setw}]} g^{(\bstau)} (\bsx) \rd \bsx_{-\setw} 
    \right)
    \rd \bsx_\setw,
\end{multline}
and reproducing kernel
\begin{multline}\label{eq:Sob-ab-kernel}
 \KSobab_{\alpha,d}(\bsx,\bsy)
 :=
 \prod_{j=1}^d \Bigg(
   \frac{1}{(b_j-a_j)^2}
   +
   \sum_{\tau_j=1}^{\alpha-1} \frac{B^{[a_j, b_j]}_{\tau_j}(x_j)}{\tau_j!} \frac{B^{[a_j, b_j]}_{\tau_j}(y_j)}{\tau_j!} 
   \\+
   (b_j-a_j)\frac{B^{[a_j,b_j]}_\alpha(x_j)}{\alpha!} \frac{B^{[a_j,b_j]}_\alpha (y_j)}{\alpha!}
   +
   (-1)^{\alpha+1} \frac{\tilB^{[a_j,b_j]}_{2\alpha}(x_j-y_j+a_j)}{(2\alpha)!}
 \Bigg)
 .
\end{multline}
\end{definition}

We will show in Proposition~\ref{prop:kernelSobab} that $\KSobab_{\alpha,d}$ is indeed the kernel corresponding to the inner product above.

For the periodic space we define scaled periodic basis functions which are just the tensor product of the one-dimensional Fourier basis on $[a,b]$ given in~\eqref{eq:exp-ab}, that is, for $\bsh \in \Z^d$, we define
\begin{align*}
  \varphi_\bsh^{[\bsa,\bsb]}(\bsx) 
  &:=
  \prod_{j=1}^d \varphi_{h_j}^{[a_j,b_j]}(x_j)
  = 
  \prod_{j=1}^d \frac{\exp(2\pi \rmi \, h_j (x_j-a_j) / (b_j-a_j))}{ \sqrt{b_j-a_j}}
  .
\end{align*}
They form an orthogonal and normalized set of basis functions against the standard $L_2$ inner product on the box $[\bsa,\bsb]$.
Therefore, our periodic functions are expressed as absolutely converging Fourier series with respect to the basis $\varphi_\bsh^{[\bsa,\bsb]}$ as follows
\begin{align*}
  f(\bsx)
  &=
  \sum_{\bsh \in \Z^d}  \widehat{f}^{[\bsa,\bsb]} \, \varphi_\bsh^{[\bsa,\bsb]}(\bsx)
  ,
  &
  \widehat{f}^{[\bsa,\bsb]}(\bsh)
  &:=
  \int_{[\bsa,\bsb]} f(\bsx) \, \overline{\varphi_\bsh^{[\bsa,\bsb]}(\bsx)} \rd \bsx
  .
\end{align*}

\begin{definition}\label{def:Kor-ab}
  For $\alpha > 1/2$ the space $\calH(\KKorab_{\alpha,d})$ is a Korobov space on the box $[\bsa,\bsb]$, which is a reproducing kernel Hilbert space with inner product
\begin{align*}
 \langle f , g \rangle_{\KKorab_{\alpha,d}}
 &:=
 \sum_{\bsh\in\Z^d} \widehat{f}^{[\bsa,\bsb]}(\bsh)\;  \overline{ \widehat{g}^{[\bsa,\bsb]}(\bsh)} \; \left[r^{[\bsa,\bsb]}_{\alpha,d}(\bsh)\right]^{2}
 ,
\end{align*}
and reproducing kernel
\begin{align*}
  \KKorab_{\alpha,d} (\bsx,\bsy)
  &:=
  \sum_{\bsh\in\Z^d} [r_{\alpha,d}^{[\bsa,\bsb]}(\bsh)]^{-2} \, \varphi_\bsh^{[\bsa,\bsb]}(\bsx) \, \overline{\varphi_\bsh^{[\bsa,\bsb]}(\bsy)}
  \\
  &\hphantom{:}=
  \prod_{j=1}^d \left( \frac{1}{(b_j-a_j)^2} + (-1)^{\alpha+1} \frac{\tilB^{[a_j,b_j]}_{2\alpha}(x_j-y_j+a_j)}{(2\alpha)!} \right)
  ,
\end{align*}
where the second form holds when $\alpha \in \N$, and with
\begin{align*}
  r_{\alpha,d}^{[\bsa,\bsb]}(\bsh)
  &:=
  \prod_{j=1}^d r_\alpha^{[a_j,b_j]}(h_j)
  ,
  &
  r_\alpha^{[a_j,b_j]}(h_j)
  &:=
  \begin{cases}
    \sqrt{b_j - a_j} , & \text{for $h_j = 0$}, \\[2mm]
    \displaystyle \frac{|2\pi\,h_j|^\alpha}{(b_j-a_j)^\alpha} , & \text{for $h_j \ne 0$} .
  \end{cases}
\end{align*}
\end{definition}

The choice of $r_\alpha^{[a,b]}$ made in the definition is chosen such that functions in the Sobolev space of smoothness $\alpha \in \N$ which are periodic will have the same norm in this Korobov space.
This is shown in the next lemma.

\begin{lemma}
  For $\alpha \in \N$ and $f \in \calH(\KKorab_{\alpha,d})$ we have $\calH(\KKorab_{\alpha,d}) \subset \calH(\KSobab_{\alpha,d})$ and
  \[
    \|f\|_{\KKorab_{\alpha,d}} = \|f\|_{\KSobab_{\alpha,d}}
    .
  \]
\end{lemma}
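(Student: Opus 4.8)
The plan is to exploit the tensor-product Fourier structure of $\calH(\KKorab_{\alpha,d})$ and mimic the computation that, for the unit cube, rewrote the Korobov inner product in terms of derivatives (the discussion following Definition~\ref{def:Kor}). Concretely, I would first settle the inclusion, then use periodicity to collapse the Sobolev norm sum over $\bstau \in \{0,\ldots,\alpha\}^d$ down to $\bstau \in \{0,\alpha\}^d$, and finally evaluate the surviving terms by Parseval against the orthonormal basis $\{\varphi_\bsh^{[\bsa,\bsb]}\}$.

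For the inclusion, recall that a function $f \in \calH(\KKorab_{\alpha,d})$ is by Definition~\ref{def:Kor-ab} an absolutely convergent Fourier series $f = \sum_\bsh \widehat{f}^{[\bsa,\bsb]}(\bsh)\,\varphi_\bsh^{[\bsa,\bsb]}$ with $\sum_\bsh |\widehat{f}^{[\bsa,\bsb]}(\bsh)|^2\,[r_{\alpha,d}^{[\bsa,\bsb]}(\bsh)]^2 < \infty$. Since each $\varphi_\bsh^{[\bsa,\bsb]}$ is periodic with period $(b_j-a_j)$ in the $j$-th coordinate, termwise differentiation (justified by absolute convergence of the differentiated series up to order $\alpha-1$, and convergence in $L_2$ once any $\tau_j=\alpha$) shows that $f^{(\bstau)}$ is absolutely continuous for $\bstau \in \{0,\ldots,\alpha-1\}^d$, that $f^{(\bstau)} \in L_2([\bsa,\bsb])$ whenever some $\tau_j=\alpha$, and that $f$ satisfies the periodic matching $f^{(\bstau)}|_{x_j=b_j}=f^{(\bstau)}|_{x_j=a_j}$ for all $\bstau \in \{0,\ldots,\alpha-1\}^d$. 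This is exactly the regularity demanded by Definition~\ref{def:Sob-ab}, so $\calH(\KKorab_{\alpha,d}) \subset \calH(\KSobab_{\alpha,d})$; it is the box analogue of the unit-cube argument and should just be stated, not belabored.

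For the norm, I would next show that in the Sobolev norm \eqref{eq:Sob-ab-innerproduct} every $\bstau$ with some component $\tau_j \in \{1,\ldots,\alpha-1\}$ contributes zero. Such a $j$ lies outside $\setw=\{j:\tau_j=\alpha\}$ and is therefore integrated out; reducing the order of differentiation in $x_j$ by one and applying the fundamental theorem of calculus turns the inner integral into a boundary difference in $x_j$, which vanishes by the periodic boundary condition just established. This is precisely the mechanism of \eqref{eq:periodicity-integrals} transplanted to $[\bsa,\bsb]$, and it leaves only the terms with $\bstau \in \{0,\alpha\}^d$. On each such term I would insert the Fourier expansion and use $\tfrac{\rmd^\alpha}{\rmd x_j^\alpha}\varphi_{h_j}^{[a_j,b_j]} = (2\pi\rmi\,h_j/(b_j-a_j))^\alpha\,\varphi_{h_j}^{[a_j,b_j]}$ together with $\int_{a_j}^{b_j}\varphi_{h_j}^{[a_j,b_j]}\rd x_j=\sqrt{b_j-a_j}$ for $h_j=0$ and $0$ otherwise: the inner integral over $\bsx_{-\setw}$ forces $h_j=0$ for $j\notin\setw$ and produces the factor $\prod_{j\notin\setw}(b_j-a_j)$ after squaring, while orthonormality over $[a_j,b_j]$ for $j\in\setw$ turns the outer integral into $\sum |\widehat{f}^{[\bsa,\bsb]}(\bsh)|^2\prod_{j\in\setw}|2\pi h_j/(b_j-a_j)|^{2\alpha}$. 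Recognizing $(b_j-a_j)=[r_\alpha^{[a_j,b_j]}(0)]^2$ and $|2\pi h_j/(b_j-a_j)|^{2\alpha}=[r_\alpha^{[a_j,b_j]}(h_j)]^2$ for $h_j\neq0$, the $\setw$-term equals $\sum_{\bsh:\,\supp(\bsh)=\setw}|\widehat{f}^{[\bsa,\bsb]}(\bsh)|^2\,[r_{\alpha,d}^{[\bsa,\bsb]}(\bsh)]^2$, since any $\bsh$ with $h_j=0$ for some $j\in\setw$ drops out automatically.

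Summing over all $\setw\subseteq\{1,\ldots,d\}$ then reassembles $\sum_{\bsh\in\Z^d}|\widehat{f}^{[\bsa,\bsb]}(\bsh)|^2\,[r_{\alpha,d}^{[\bsa,\bsb]}(\bsh)]^2=\|f\|_{\KKorab_{\alpha,d}}^2$, giving the claimed equality. I expect the main obstacle to be the bookkeeping in this last step: one must check that the decomposition over $\setw$ coming from the Sobolev sum matches the partition of $\Z^d$ by $\supp(\bsh)$ coming from the Korobov sum, and confirm that the weights $r_\alpha^{[a_j,b_j]}$ were normalized precisely so that both the $h_j=0$ factor $(b_j-a_j)$ and the $h_j\neq0$ factor $|2\pi h_j/(b_j-a_j)|^{2\alpha}$ line up exactly; everything else is the routine Parseval/periodicity argument.
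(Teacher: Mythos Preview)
Your proposal is correct and follows essentially the same route as the paper: reduce the Sobolev sum to $\bstau\in\{0,\alpha\}^d$ via the periodic boundary conditions, expand in the orthonormal basis $\varphi_\bsh^{[\bsa,\bsb]}$, and use Parseval together with the normalization of $r_\alpha^{[a_j,b_j]}$ to match the Korobov norm term by term over $\setw=\supp(\bsh)$. The only difference is that you spell out the inclusion argument a bit more carefully than the paper does, which is harmless.
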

\begin{proof}
As is the case in the unit cube we only need to consider partial derivatives of order $0$ and $\alpha$ in the norm of the Sobolev space when the function is periodic as all the interior integrals vanish due to the periodicity, see~\eqref{eq:periodicity-integrals}.
For $f,g \in \calH(\KKorab_{\alpha,d})$ we can thus write
\begin{align*}
  \langle f, g \rangle_{\KSobab_{\alpha,d}}
  &=
  \hspace{-5mm}  \sum_{ \substack{ \bstau \in \{0 , \alpha \}^d \\ \setw := \{j: \tau_j=\alpha \}} }  %% FIXME: sorry, manually formatting
    \int_{[\bsa_\setw,\bsb_\setw]} 
    \left(\int_{[\bsa_{-\setw},\bsb_{-\setw}]} f^{(\bstau)} (\bsx) \rd \bsx_{-\setw} 
    \right)
    \left(\int_{[\bsa_{-\setw},\bsb_{-\setw}]} g^{(\bstau)} (\bsx) \rd \bsx_{-\setw} 
    \right)
    \rd \bsx_\setw
    .
\end{align*}
Therefore, for $f \in \calH(\KKorab_{\alpha,d})$ and $\bstau \in \{0,\alpha\}^d$ we have, with $\setw = \{j : \tau_j = \alpha\}$,
\begin{align*}
  &
  \int_{[\bsa_\setw,\bsb_\setw]} \left(
    \int_{[\bsa_{-\setw},\bsb_{-\setw}]} f^{(\bstau)}(\bsx) \rd{\bsx_{-\setw}}
  \right)^2 \rd{\bsx_\setw}
  \\
  &\quad=
  \int_{[\bsa_\setw,\bsb_\setw]} \left(
  \sum_{\substack{\bsh \in \Z^d \\ h_j \ne 0 \text{ for } j \in \setw}} \hspace{-5mm} \widehat{f}^{[\bsa,\bsb]}(\bsh) \, %% FIXME: sorry, manually formatting
    \left[
      \prod_{j \in \setw} \frac{(2\pi\rmi\,h_j)^\alpha}{(b_j-a_j)^\alpha} \, \varphi^{[a_j,b_j]}_{h_j}(x_j)
    \right]
    \,
    \left[ \prod_{j \notin \setw} 
      \underbrace{\int_{a_j}^{b_j} \varphi^{[a_j,b_j]}_{h_j}(x_j) \rd{x_j}}_{= 0 \text{ if } h_j \ne 0}
    \right]
  \right)^2 \hspace{-3mm} \rd{\bsx_\setw} %% FIXME: sorry, manually formatting
  \\
  &\quad=
  \int_{[\bsa_\setw,\bsb_\setw]} \left(
  \sum_{\substack{\bsh \in \Z^d \\ h_j \ne 0 \text{ for } j \in \setw \\ h_j = 0 \text{ for } j \notin \setw}} \hspace{-5mm}  \widehat{f}^{[\bsa,\bsb]}(\bsh) \, %% FIXME: sorry, manually formatting
    \left[
      \prod_{j \in \setw} \frac{(2\pi\rmi\,h_j)^\alpha}{(b_j-a_j)^\alpha} \, \varphi^{[a_j,b_j]}_{h_j}(x_j)
    \right]
    \,
    \left[ \prod_{j \notin \setw} 
      (b_j-a_j)^{1-1/2}
    \right]
  \right)^2 \hspace{-3mm} \rd{\bsx_\setw} %% FIXME: sorry, manually formatting
  \\
  &\quad=
  \left(\prod_{j\notin\setw} \sqrt{b_j-a_j}\right)^2
  \\ &\qquad\qquad
  \int_{[\bsa_\setw,\bsb_\setw]} \left(
  \sum_{\bsh_\setw \in (\Z \setminus\{0\})^{|\setw|}}
    \widehat{f}^{[\bsa,\bsb]}(\bsh_\setw;\bszero_{-\setw}) \, 
    \left[
      \prod_{j \in \setw} \frac{(2\pi\rmi\,h_j)^\alpha}{(b_j-a_j)^\alpha} \, \varphi^{[a_j,b_j]}_{h_j}(x_j)
    \right]
  \right)^2 \rd{\bsx_\setw}
  \\
  &\quad=
  \sum_{\substack{{\bsh_\setw \in (\Z\setminus\{0\})^{|\setw|} }}}
    \left|\widehat{f}^{[\bsa,\bsb]}(\bsh)\right|^2 \, 
    [r_{\alpha,d}^{[\bsa,\bsb]}(\bsh)]^2
  ,
\end{align*}
and thus for $f \in \calH(\KKorab_{\alpha,d})$
\begin{align*}
  \|f\|_{\KKorab_{\alpha,d}}^2
  &=
  \sum_{\bsh \in \Z^d} |\widehat{f}^{[\bsa,\bsb]}(\bsh)|^2 \,
  [r_{\alpha,d}^{[\bsa,\bsb]}(\bsh)]^2
  \\
  &=
  \sum_{\setw \subseteq \{1,\ldots,d\}} 
  \sum_{{\bsh_\setw \in (\Z\setminus\{0\})^{|\setw|} }}
    \left|\widehat{f}^{[\bsa,\bsb]}(\bsh)\right|^2 \, 
    [r_{\alpha,d}^{[\bsa,\bsb]}(\bsh)]^2
  =
  \|f\|_{\KSobab_{\alpha,d}}^2
  .
  \qedhere
\end{align*}
\end{proof}

Since we will apply a scaled lattice rule to a function in $\calH(\KKorab_{\alpha,d})$ we show how to bound the error in terms of the worst-case error of the same lattice rule on the unit cube.

\begin{proposition}\label{prop:lattice-rule-error-ab}
  For $\alpha \in \N$ and $f \in \calH(\KKorab_{\alpha,d}) \subset \calH(\KSobab_{\alpha,d})$ the error of using an $n$-point lattice rule with generating vector $\bsz \in \Z^d$ and scaled nodes $\bsp_i^{[\bsa,\bsb]}$ can be bounded as
  \begin{multline*}
    \left|
    \frac{\prod_{j=1}^d (b_j-a_j)}{n} \sum_{i=1}^n f(\bsp_i^{[\bsa,\bsb]})
    -
    \int_{[\bsa,\bsb]} f(\bsx) \rd \bsx
    \right|
    \\
    \le
    \|f\|_{\KKorab_{\alpha,d}}
    \,
    \left( \sum_{\bszero \ne \bsh \in \Lambda^\perp(\bsz,n)} [r_{\alpha,d}^{[\bsa,\bsb]}(\bsh)]^{-2} \prod_{j=1}^d (b_j - a_j) \right)^{1/2}
    \\
    \le
    \|f\|_{\KKorab_{\alpha,d}}
    \,
    \left( \prod_{j=1}^d \max(1, b_j-a_j)^{\alpha+1/2} \right)
    \left( \sum_{\bszero \ne \bsh \in \Lambda^\perp(\bsz,n)} [r_{\alpha,d}(\bsh)]^{-2} \right)^{1/2}
    ,
  \end{multline*}
  where $\Lambda^\perp(\bsz,n) := \{ \bsh \in \Z^d : \bsh \cdot \bsz \equiv 0 \pmod{n} \}$ is the dual of the lattice.
  The norm $\|f\|_{\KKorab_{\alpha,d}}$ can be replaced by the norm in the associated Sobolev space since $\|f\|_{\KKorab_{\alpha,d}} = \|f\|_{\KSobab_{\alpha,d}}$ for the spaces here defined with $f \in \calH(\KKorab_{\alpha,d}) \subset \calH(\KSobab_{\alpha,d})$.
\end{proposition}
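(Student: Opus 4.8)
The plan is to evaluate the left-hand side \emph{exactly} as a Fourier sum over the dual lattice and then apply Cauchy--Schwarz in the weighted $\ell_2$ sense of the Korobov norm. First I would expand $f$ in its Fourier series $f = \sum_{\bsh\in\Z^d} \widehat{f}^{[\bsa,\bsb]}(\bsh)\,\varphi_\bsh^{[\bsa,\bsb]}$. This series converges absolutely: by Cauchy--Schwarz $\sum_{\bsh} |\widehat{f}^{[\bsa,\bsb]}(\bsh)| \le \|f\|_{\KKorab_{\alpha,d}}\,\bigl(\sum_{\bsh}[r_{\alpha,d}^{[\bsa,\bsb]}(\bsh)]^{-2}\bigr)^{1/2}$, and the last sum is finite because $\alpha>1/2$. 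Absolute convergence is what legitimizes interchanging the infinite sum with both the integral and the finite quadrature sum.

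Next I evaluate the integral and the scaled lattice rule on each basis function $\varphi_\bsh^{[\bsa,\bsb]}$. Using $\int_{[\bsa,\bsb]}\varphi_\bsh^{[\bsa,\bsb]}(\bsx)\rd\bsx = \prod_{j}\sqrt{b_j-a_j}$ for $\bsh=\bszero$ and $0$ otherwise, the integral collapses to $\int_{[\bsa,\bsb]} f\rd\bsx = \widehat{f}^{[\bsa,\bsb]}(\bszero)\prod_{j}\sqrt{b_j-a_j}$. For the quadrature, the key observation is that the affine scaling cancels in the exponent, $(p_{i,j}^{[\bsa,\bsb]}-a_j)/(b_j-a_j)=p_{i,j}$, so that $\varphi_\bsh^{[\bsa,\bsb]}(\bsp_i^{[\bsa,\bsb]}) = \exp(2\pi\rmi\,\bsh\cdot\bsp_i)/\prod_{j}\sqrt{b_j-a_j}$. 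Together with the weight $\prod_{j}(b_j-a_j)/n$ this reduces the rule applied to $\varphi_\bsh^{[\bsa,\bsb]}$ to $\prod_{j}\sqrt{b_j-a_j}\cdot\frac1n\sum_{i=1}^n\exp(2\pi\rmi\,\bsh\cdot\bsp_i)$, and the standard lattice character-sum identity makes this equal to $\prod_{j}\sqrt{b_j-a_j}$ exactly when $\bsh\in\Lambda^\perp(\bsz,n)$ and $0$ otherwise. Hence the rule applied to $f$ equals $\prod_{j}\sqrt{b_j-a_j}\sum_{\bsh\in\Lambda^\perp(\bsz,n)}\widehat{f}^{[\bsa,\bsb]}(\bsh)$. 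Subtracting the integral and using $\bszero\in\Lambda^\perp(\bsz,n)$, the error is $\prod_{j}\sqrt{b_j-a_j}\sum_{\bszero\ne\bsh\in\Lambda^\perp(\bsz,n)}\widehat{f}^{[\bsa,\bsb]}(\bsh)$. Inserting $r_{\alpha,d}^{[\bsa,\bsb]}(\bsh)\,[r_{\alpha,d}^{[\bsa,\bsb]}(\bsh)]^{-1}$ and applying Cauchy--Schwarz over the dual lattice, the factor built from $|\widehat{f}^{[\bsa,\bsb]}(\bsh)|^2[r_{\alpha,d}^{[\bsa,\bsb]}(\bsh)]^2$ is a partial sum of $\|f\|_{\KKorab_{\alpha,d}}^2$, hence bounded by it; absorbing $\prod_{j}\sqrt{b_j-a_j}$ under the remaining square root gives the first claimed bound.

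For the second inequality I would compare the two weight functions term by term. From the definitions, $(b_j-a_j)\,[r_\alpha^{[a_j,b_j]}(h_j)]^{-2}$ equals $1$ when $h_j=0$ and equals $(b_j-a_j)^{2\alpha+1}[r_\alpha(h_j)]^{-2}$ when $h_j\ne0$, so that $\prod_{j}(b_j-a_j)\,[r_{\alpha,d}^{[\bsa,\bsb]}(\bsh)]^{-2} = \bigl(\prod_{j:\,h_j\ne0}(b_j-a_j)^{2\alpha+1}\bigr)[r_{\alpha,d}(\bsh)]^{-2}$. The purpose of replacing each $(b_j-a_j)^{2\alpha+1}$ by $\max(1,b_j-a_j)^{2\alpha+1}$ is that this simultaneously upper bounds the factors attached to the nonzero components and supplies a harmless factor $\ge1$ for the vanishing components, producing the $\bsh$-independent bound $\prod_{j=1}^d\max(1,b_j-a_j)^{2\alpha+1}$. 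Being independent of $\bsh$, this factor pulls out of the sum, and taking square roots (with $\max(1,b_j-a_j)^{\alpha+1/2}=\sqrt{\max(1,b_j-a_j)^{2\alpha+1}}$) yields the second bound.

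The only genuinely delicate point is the interchange of the infinite Fourier sum with the integral and the quadrature sum, which I justify through the absolute-convergence estimate above. The remaining ingredients---the lattice character-sum identity and the weight comparison---are routine, with the main care needed in the last step being the precise bookkeeping of which components $h_j$ are zero so that the box-size factor can be extracted uniformly in $\bsh$.
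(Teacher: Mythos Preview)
Your proof is correct and follows essentially the same approach as the paper: expand $f$ in its scaled Fourier series, use the lattice character sum to identify the error as $\prod_j\sqrt{b_j-a_j}\sum_{\bszero\ne\bsh\in\Lambda^\perp}\widehat{f}^{[\bsa,\bsb]}(\bsh)$, and then apply Cauchy--Schwarz with the Korobov weight. You in fact supply more detail than the paper does, both in justifying the interchange of the Fourier sum with the integral/quadrature via absolute convergence and in spelling out the componentwise comparison $(b_j-a_j)[r_\alpha^{[a_j,b_j]}(h_j)]^{-2}$ for $h_j=0$ versus $h_j\ne0$ that drives the second inequality.
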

\begin{proof}
  First we note that
  \begin{align*}
     \widehat{f}^{[\bsa,\bsb]}(\bszero) \prod_{j=1}^d \sqrt{b_j-a_j}
     &=
     \int_{[\bsa,\bsb]} f(\bsx) \rd\bsx
     .
  \end{align*}
  Therefore
  \begin{align*}
    &
    \frac{\prod_{j=1}^d (b_j-a_j)}{n} \sum_{i=1}^n f(\bsp_i^{[\bsa,\bsb]})
    -
    \int_{[\bsa,\bsb]} f(\bsx) \rd \bsx
    \\
    &\qquad=
    \sum_{\bsh \in \Z^d}
    \widehat{f}^{[\bsa,\bsb]}(\bsh)
    \frac{\prod_{j=1}^d \sqrt{b_j-a_j}}{n}
    \sum_{i=1}^n \exp(2\pi\rmi \, \bsh \cdot \bsz \, i / n)
    -
    \widehat{f}^{[\bsa,\bsb]}(\bszero) \prod_{j=1}^d \sqrt{b_j-a_j}
    \\
    &\qquad=
    \left[ \prod_{j=1}^d \sqrt{b_j-a_j} \right]
    \sum_{\bszero \ne \bsh \in \Lambda^\perp(\bsz,n)}
    \widehat{f}^{[\bsa,\bsb]}(\bsh)
    ,
  \end{align*}
  where we made use of the character property, for $t \in \Z$,
  \begin{align*}
    \frac1n \sum_{i=1}^n \exp(2\pi\rmi \, t \, i/n)
    &=
    \begin{cases}
      1, & \text{if $t \equiv 0 \pmod{n}$}, \\
      0, & \text{otherwise},
    \end{cases}
  \end{align*}
  with $t = \bsh \cdot \bsz$.
  The result now follows by multiplying and dividing with $r_{\alpha,d}^{[\bsa,\bsb]}(\bsh)$ and applying the Cauchy--Schwarz inequality to arrive at the norm of $f$ in the Korobov space $\calH(\KKorab_{\alpha,d})$.
  The last inequality in the claim follows from using the definitions of $r_{\alpha,d}^{[\bsa,\bsb]}(\bsh)$ and $r_{\alpha,d}(\bsh)$.
\end{proof}

\section{The Bernoulli polynomial method as a projection}\label{sec:BPM-projection}

The Bernoulli polynomial method was introduced by Korobov \cite{K1963} and  Zaremba \cite{Z1972}.
We first revisit the method on the unit cube and show it can be interpreted as an orthogonal projection and then we generalize this analysis to arbitrary boxes $[\bsa,\bsb]$.
In Section~\ref{sec:error-analysis}, this projection will be used for measuring the non-periodicity of a function.

\subsection{The Bernoulli polynomial method on the unit cube}

We first review the Bernoulli polynomial method on the unit cube and conclude that this method coincides with an orthogonal projection from an unanchored Sobolev space $\calH(\KSob_{\alpha,d})$ to a Korobov space $\calH(\KKor_{\alpha,d})$ where those two spaces have exactly the same norms for a function $f\in\calH(\KKor_{\alpha,d}) \subset \calH(\KSob_{\alpha,d})$.

In \cite{K1963}, the Bernoulli polynomial method was originally introduced. In the present paper, to introduce the method we follow the definition by \cite{BH1992,SJ1994} for the sake of notational simplicity.
Define the recursion 
\begin{align}\label{eq:BPM-recursive}
\begin{cases}
  F_{[0]}(\bsx)
  &:=\;
  f(\bsx)
  ,
  \\
  F_{[j]}(\bsx)
  &:=\;\displaystyle
  F_{[j-1]}(\bsx)
  -
  \sum_{\tau_j=1}^\alpha
  \frac{B_{\tau_j}(x_j)}{\tau_j!}
  \left[
    \frac{\partial^{\tau_j-1} F_{[j-1]}(\bsx)}{\partial x_j^{\tau_j-1}}
  \right]_{x_j=0}^1
  \\
  &\hphantom{:}=\;\displaystyle
  F_{[j-1]}(\bsx)
  -
  \sum_{\tau_j=1}^\alpha
  \frac{B_{\tau_j}(x_j)}{\tau_j!}
  \int_0^{1\vphantom{A^{A^A}}}
    \frac{\partial^{\tau_j} F_{[j-1]}(\bsx)}{\partial x_j^{\tau_j}}
  \rd{x_j}
  \qquad \text{for } j = 1, \ldots, d
  ,
\end{cases}
\end{align}
where we used that in our setting all derivatives up to order $\alpha-1$ are absolutely continuous such that we can replace the difference by an integral.
The final function
\[
  F := F_{[d]}
\]
now has the property of the Korobov space in that it satisfies that all derivatives up to order $\alpha-1$, in all dimensions, have matching boundary values.
Furthermore, since the Bernoulli polynomials integrate to zero, the integral of $F$ is equal to the integral of the original function $f$.

From the recursive definition above we can reach the following expression \cite[p.~61]{Z1972}, see Lemma~\ref{lem:BPM} below,
\begin{align}
  \notag
  F(\bsx)
  &:=
  f(\bsx)
  +
      \sum_{\substack{\bszero \ne \bstau \in \{0,\ldots,\alpha\}^d \\ \setu := \supp(\bstau)}}
      (-1)^{|\setu|}
      \left[ \prod_{j \in \setu} \frac{B_{\tau_j}(x_j)}{\tau_j!} \right]
        \sum_{\setw\subseteq\setu} (-1)^{|\setw|}
          f^{(\bstau_\setu-\bsone_\setu, \bszero_{-\setu})}(\bszero_\setw,\bsone_{\setu\setminus\setw}, \bsx_{-\setu})
  \\\label{eq:BPM}
  &\hphantom{:}=
  f(\bsx)
  +
  \sum_{\substack{\bszero \ne \bstau \in \{0,\ldots,\alpha\}^d \\ \setu := \supp(\bstau)}}
      (-1)^{|\setu|}
      \left[ \prod_{j \in \setu} \frac{B_{\tau_j}(x_j)}{\tau_j!} \right]
      \int_{[0,1]^{|\setu|}} f^{(\bstau)}(\bsx) \rd\bsx_{\setu}
  .
\end{align}
We remark that the product over $j \in \setu$ could be replaced by the full product over $j=1,\ldots,d$ since $B_0(x)/0! = 1$, however, when we extend this method in the next section to arbitrary boxes $[\bsa,\bsb]$ this property is not true anymore since $B_0^{[a,b]}(x) = 1/(b-a)$ by our definition~\eqref{eq:def-Bab}. We will therefore take care to write the expressions such that their generalization to arbitrary boxes stays transparent.
We also like to point out that often we will have products $\prod_{j\in\setu}$ where $\tau_j$ could be zero for $j\in\setu$, this is unlike normal appearances in QMC literature.
In the proof of the next lemma we demonstrate that~\eqref{eq:BPM-recursive} and~\eqref{eq:BPM} are the same by an inductive proof, as this formal method returns later in the proof of Lemma~\ref{lem:bernpartint} and Proposition~\ref{prop:representations}.
For conciseness we use the integral form~\eqref{eq:BPM} which holds since in our setting all derivatives up to order $\alpha-1$ are absolutely continuous.

\begin{lemma}\label{lem:BPM}
  The recursive definition~\eqref{eq:BPM-recursive} and~\eqref{eq:BPM} are the same.
\end{lemma}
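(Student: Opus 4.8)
The plan is to prove the equivalence by induction on the number of dimensions that have been periodized, showing at each stage that $F_{[k]}$ already has the closed form \eqref{eq:BPM} but restricted to multi-indices whose support lies in $\{1,\ldots,k\}$. Concretely, I would propose the induction hypothesis
\[
  F_{[k]}(\bsx)
  =
  f(\bsx)
  +
  \sum_{\substack{\bszero\ne\bstau\in\{0,\ldots,\alpha\}^d\\ \setu:=\supp(\bstau)\subseteq\{1,\ldots,k\}}}
  (-1)^{|\setu|}
  \left[\prod_{j\in\setu}\frac{B_{\tau_j}(x_j)}{\tau_j!}\right]
  \int_{[0,1]^{|\setu|}} f^{(\bstau)}(\bsx)\rd\bsx_\setu .
\]
The base case $k=0$ is immediate, since $F_{[0]}=f$ and the sum is empty, and the case $k=d$ is exactly \eqref{eq:BPM}, so it remains only to carry out the inductive step.

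For the inductive step I would insert the hypothesis for $F_{[k-1]}$ into the recursion \eqref{eq:BPM-recursive} defining $F_{[k]}$. The key observation is that every Bernoulli prefactor $\prod_{j\in\setu} B_{\tau_j}(x_j)/\tau_j!$ appearing in $F_{[k-1]}$ depends only on the coordinates $x_j$ with $j\in\setu\subseteq\{1,\ldots,k-1\}$, hence not on $x_k$. Consequently, applying $\partial^{\tau_k}/\partial x_k^{\tau_k}$ and then $\int_0^1\,\cdot\,\rd x_k$ acts solely on the factor $\int_{[0,1]^{|\setu|}} f^{(\bstau)}\rd\bsx_\setu$; since $k\notin\setu$, one may differentiate under the integral over $\bsx_\setu$ and then append the integration over $x_k$, turning this factor into $\int_{[0,1]^{|\setu|+1}} f^{(\bstau')}\rd\bsx_{\setu\cup\{k\}}$, where $\bstau'$ is obtained from $\bstau$ by raising its $k$-th component from $0$ to $\tau_k$. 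Multiplying by the recursion's prefactor $-B_{\tau_k}(x_k)/\tau_k!$ adjoins the missing Bernoulli factor and converts the sign $(-1)^{|\setu|}$ into $(-1)^{|\setu\cup\{k\}|}$. The identity term $f$ (the $\bstau=\bszero$ contribution) produces, in the same way, exactly the new terms whose support is the singleton $\{k\}$.

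Summing over $\tau_k\in\{1,\ldots,\alpha\}$ and over all $\bstau$ with $\supp(\bstau)\subseteq\{1,\ldots,k-1\}$ then produces precisely one copy of each closed-form term whose support is contained in $\{1,\ldots,k\}$ and \emph{contains} $k$: the pair (old multi-index $\bstau$, chosen order $\tau_k$) is in bijection with such a term via $\setu\mapsto\setu\cup\{k\}$ together with recording $\tau_k$ as the $k$-th component of $\bstau'$. Adjoining these to the unchanged terms carried over from $F_{[k-1]}$ (those with support avoiding $k$) yields all terms with support in $\{1,\ldots,k\}$, which is the hypothesis for $F_{[k]}$ and closes the induction.

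The step I expect to require the most care is the bookkeeping in the inductive step: verifying that the differentiation in $x_k$ commutes with the integration over $\bsx_\setu$ (legitimate because $k\notin\setu$ and all derivatives up to order $\alpha$ are available, with the top order read in the $L_2$/almost-everywhere sense already flagged in the text), and confirming that the sign and the Bernoulli factor combine to reconstitute exactly the $(-1)^{|\setu\cup\{k\}|}\prod_{j\in\setu\cup\{k\}}B_{\tau_j}(x_j)/\tau_j!$ of \eqref{eq:BPM}, with no double counting. Replacing the boundary difference $[\,\cdot\,]_{x_k=0}^1$ by $\int_0^1 \partial_{x_k}(\cdot)\rd x_k$ is justified throughout by the absolute continuity of the derivatives up to order $\alpha-1$, as already noted beneath \eqref{eq:BPM-recursive}.
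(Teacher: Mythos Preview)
Your proposal is correct and follows essentially the same approach as the paper: both argue by induction on the number of periodized dimensions, with the hypothesis that $F_{[k]}$ equals $f$ plus the closed-form sum over multi-indices supported in $\{1,\ldots,k\}$, and both carry out the inductive step by inserting this into the recursion and merging the resulting sums. The only cosmetic differences are that the paper starts its base case at $k=1$ rather than $k=0$ and writes the merge of the old support $\setu$ with the new singleton $\{k\}$ explicitly as a union of disjoint index sets, whereas you describe the same bijection in words.
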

\begin{proof}
  We prove this inductively.
  We start with the base case and obtain immediately from~\eqref{eq:BPM-recursive} that
  \begin{align*}
    F_{[1]}(\bsx)
    &=
    \sum_{\setu \subseteq \{1\}} (-1)^{|\setu|}
    \sum_{\bstau_\setu \in \{1,\ldots,\alpha\}^{|\setu|}}
    \left[ \prod_{j\in\setu} \frac{B_{\tau_j}(x_j)}{\tau_j!} \right]
    \int_{[0,1]^{|\setu|}} f^{(\tau_1,0,\ldots,0)}(\bsx) \rd{\bsx_\setu}
    ,
  \end{align*}
  since for $\setu = \emptyset$ we have $\tau_1 = 0$ and obtain $f(\bsx) = F_{[0]}(\bsx)$, while for $\setu = \{1\}$ we obtain the sum over $\tau_1 = \tau_j$ from~\eqref{eq:BPM-recursive} with $F_{[0]}(\bsx) = f(\bsx)$.
  Now assume the first $j-1$ dimensions are handled and $F_{[j-1]}$ can be written as follows (by the induction hypothesis)
  \begin{align*}
    F_{[j-1]}(\bsx)
    &=
    \sum_{\setu \subseteq \{1,\ldots,j-1\}}
    (-1)^{|\setu|}
    \sum_{\bstau_\setu \in \{1,\ldots,\alpha\}^{|\setu|}}
    \left[ \prod_{j\in\setu} \frac{B_{\tau_j}(x_j)}{\tau_j!} \right]
    \int_{[0,1]^{|\setu|}} f^{(\tau_1,\ldots,\tau_{j-1},0,\ldots,0)}(\bsx) \rd{\bsx_\setu}
    ,
  \end{align*}
  then, by the recursion formula and the same reasoning as for the expression of the base case,
  \begin{align*}
    F_{[j]}(\bsx)
    &=
    \sum_{\setv \subseteq \{j\}}
    (-1)^{|\setv|}
    \sum_{\bstau_\setv \in \{1,\ldots,\alpha\}^{|\setv|}}
    \sum_{\setu \subseteq \{1,\ldots,j-1\}}
    (-1)^{|\setu|}
    \sum_{\bstau_\setu \in \{1,\ldots,\alpha\}^{|\setu|}}
    \\
    &\qquad
    \left[ \prod_{j\in\setv} \frac{B_{\tau_j}(x_j)}{\tau_j!} \right]
    \left[ \prod_{j\in\setu} \frac{B_{\tau_j}(x_j)}{\tau_j!} \right]
    \int_{[0,1]^{|\setv|}}
    \int_{[0,1]^{|\setu|}} f^{(\tau_1,\ldots,\tau_{j-1},\tau_j,0,\ldots,0)}(\bsx) \rd{\bsx_\setu} \rd{\bsx_\setv}
    .
  \end{align*}
  From this expression it is clear that we can merge the disjunctive sets $\setu$ and $\setv$ to obtain
  \begin{align*}
   F_{[j]}(\bsx)
    &=
    \sum_{\setu \subseteq \{1,\ldots,j\}}
    (-1)^{|\setu|}
    \sum_{\bstau_\setu \in \{1,\ldots,\alpha\}^{|\setu|}}
    \left[ \prod_{j\in\setu} \frac{B_{\tau_j}(x_j)}{\tau_j!} \right]
    \int_{[0,1]^{|\setu|}} f^{(\tau_1,\ldots,\tau_j,0,\ldots,0)}(\bsx) \rd{\bsx_\setu}
    .
  \end{align*}
  The expression for $F_{[d]} = F$ can then be rewritten in the form~\eqref{eq:BPM} by pulling out $\setu = \emptyset$ to obtain $f(\bsx)$ and joining the sums over $\setu$ and $\bstau_\setu$.
\end{proof}

Our main building block is the following lemma which connects the Bernoulli polynomial method~\eqref{eq:BPM} and the orthogonal projection shown in Proposition~\ref{prop:projection}.

\begin{lemma}\label{lem:bernpartint}
  For a function $f$ on the unit cube having mixed partial derivatives up to order $\alpha-1$ in each variable which are absolutely continuous and of which the derivatives with the highest order $\alpha$ are integrable, the following holds for all $\bsx \in [0,1]^d$:
\begin{multline*}
  \sum_{\setu \subseteq \{1,\ldots,d\}}
  (-1)^{d-|\setu|}
  \sum_{\substack{\bstau_\setu \in \{0,\ldots,\alpha \}^{|\setu|} \\ \tau_j = 0 \text{ for } j \notin \setu}}
  \left[ \prod_{j\in\setu} \frac{B_{\tau_j}(x_j)}{\tau_j!} \right]
  \int_{[0,1]^{|\setu|}} f^{(\bstau)}(\bsx) \rd \bsx_{\setu}
  \\
  =
  \int_{[0,1]^d}
  \left[ \prod_{j=1}^d \frac{\tilB_\alpha(x_j-y_j)}{\alpha!} \right]
  f^{(\alpha,\ldots,\alpha)}(\bsy) \rd \bsy
  ,
\end{multline*}
where the integral over the empty domain $[0,1]^{|\setu|}$ when $\setu = \emptyset$ is the identity operation.
In this way the term for $\setu=\emptyset$ in the left hand side corresponds to $(-1)^d f(\bsx)$.
\end{lemma}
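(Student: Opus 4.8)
The plan is to reduce the $d$-dimensional statement to a single one-variable integration-by-parts identity and to tensorize. Writing, for a one-variable function $g$ and $x \in (0,1)$, the two operators
\begin{equation*}
  (Ag)(x) := \sum_{\tau=0}^\alpha \frac{B_\tau(x)}{\tau!} \int_0^1 g^{(\tau)}(t) \rd t
  , \qquad
  (Tg)(x) := \int_0^1 \frac{\tilB_\alpha(x-t)}{\alpha!} \, g^{(\alpha)}(t) \rd t
  ,
\end{equation*}
I would first observe that the left-hand side of the lemma is exactly the product operator $\bigotimes_{j=1}^d (A_j - \mathrm{Id}_j)$ applied to $f$ (here $A_j$, $\mathrm{Id}_j$ act in the $j$-th variable): expanding this product, the summand indexed by a set $\setu$ is the one where $A_j$ is chosen for $j \in \setu$ and $-\mathrm{Id}_j$ for $j \notin \setu$, which accounts for the sign $(-1)^{d-|\setu|}$, the factors $B_{\tau_j}(x_j)/\tau_j!$, and the partial integrations $\int_{[0,1]^{|\setu|}} \cdots \rd\bsx_\setu$. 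Similarly the right-hand side is $\bigotimes_{j=1}^d T_j$ applied to $f$. Because the factors in each product act on different variables they commute, so it suffices to prove the single-variable identity $Ag - g = Tg$ and then apply it one coordinate at a time, Fubini being justified since all $f^{(\bstau)}$ with $\bstau \in \{0,\ldots,\alpha\}^d$ are integrable.

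For the one-variable identity I would integrate $Tg$ by parts repeatedly, starting from the highest derivative and using $\frac{\rd}{\rd t}\frac{\tilB_\tau(x-t)}{\tau!} = -\frac{\tilB_{\tau-1}(x-t)}{(\tau-1)!}$, which follows from~\eqref{eq:Btilde-integral}. One such step turns $\int_0^1 \frac{\tilB_\tau(x-t)}{\tau!} g^{(\tau)}(t)\rd t$ into a boundary term plus $\int_0^1 \frac{\tilB_{\tau-1}(x-t)}{(\tau-1)!} g^{(\tau-1)}(t)\rd t$. For $2 \le \tau \le \alpha$ the kernel $\tilB_\tau$ is continuous, coincides with $B_\tau$ on $[0,1]$, and is periodic, so using $\tilB_\tau(x-1) = \tilB_\tau(x) = B_\tau(x)$ together with $g^{(\tau-1)}(1) - g^{(\tau-1)}(0) = \int_0^1 g^{(\tau)}$ the boundary term collapses to precisely the summand $\frac{B_\tau(x)}{\tau!}\int_0^1 g^{(\tau)}(t)\rd t$. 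Iterating down to $\tau=1$ produces the summands for $\tau = \alpha,\ldots,1$, and the final leftover integral is $\int_0^1 \frac{\tilB_0(x-t)}{0!} g(t)\rd t = \int_0^1 g(t)\rd t$, the $\tau=0$ summand.

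The delicate point, which I expect to be the main obstacle, is the last integration by parts at $\tau=1$: the periodic function $\tilB_1(x-t)$ has a unit jump at the interior point $t=x$, so it is not absolutely continuous and the plain integration by parts is invalid. I would handle this by splitting $\int_0^1 = \int_0^x + \int_x^1$, on each piece of which $\tilB_1(x-t)$ agrees with a genuine shifted Bernoulli polynomial so that integration by parts is legitimate. The two new inner boundary contributions evaluate to $\tilB_1(0^+)\,g(x) = -\tfrac12 g(x)$ and $-\tilB_1(0^-)\,g(x) = -\tfrac12 g(x)$, and their sum is exactly the missing term $-g(x)$; the outer endpoints at $t=0,1$ reassemble, via periodicity, into the $\tau=1$ summand $\frac{B_1(x)}{1!}\int_0^1 g'(t)\rd t$. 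This proves $Tg = Ag - g$ for $x \in (0,1)$, and, both sides being continuous in $x$, the identity extends to $[0,1]$. Tensorizing over the $d$ coordinates (equivalently, an induction on $d$ in the style of the proof of Lemma~\ref{lem:BPM}) then yields the claim, the term $(-1)^d f(\bsx)$ for $\setu=\emptyset$ arising from selecting the jump contribution $-g$ simultaneously in every coordinate.
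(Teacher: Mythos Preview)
Your proposal is correct and follows essentially the same route as the paper: a one-variable identity obtained by repeated integration by parts, with the $\tau=1$ step handled by splitting $\int_0^1$ at the jump of $\tilB_1(x-\cdot)$, followed by a dimension-by-dimension tensorization. The only cosmetic differences are that the paper runs the integration by parts in the opposite direction (integrating $\tilB_\tau$ to raise its index and telescoping $F_{\tau+1}-F_\tau$ rather than lowering to $\tilB_{\tau-1}$), and it writes out the multivariate induction explicitly instead of using your cleaner operator formulation $\bigotimes_j(A_j-\mathrm{Id}_j)=\bigotimes_j T_j$.
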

\begin{proof}
First we prove the one-dimensional case.
For $\tau = 1, \ldots, \alpha-1$ define
\begin{align*}
  F_\tau (x)
  &:=
  \int_0^1 f^{(\tau)}(y) \frac{\tilB_\tau(x-y)}{\tau!} \rd y
  .
\end{align*}
Making use of the properties of the Bernoulli polynomials from Section~\ref{sec:bernoulli} we obtain the following results.
For $\tau=1,\ldots,\alpha-1$, using integration by parts on $F_\tau(x)$ gives
\begin{align*}
 \int_0^1 f^{(\tau)}(y) \frac{\tilB_\tau(x-y)}{\tau!} \rd y
 &=
 \left[f^{(\tau)}(y) \frac{-\tilB_{\tau+1}(x-y)}{(\tau+1)!}\right]_{y=0}^1
 -
 \int_0^1 f^{(\tau+1)}(y) \frac{-\tilB_{\tau+1}(x-y)}{(\tau+1)!} \rd y 
 \\
 \Leftrightarrow
 F_{\tau+1}(x) - F_\tau(x)
 &=
 \frac{\tilB_{\tau+1}(x)}{(\tau+1)!} \left( f^{(\tau)}(1)- f^{(\tau)}(0) \right)
 =
 \frac{\tilB_{\tau+1}(x)}{(\tau+1)!} \int_0^1 f^{(\tau+1)}(y) \rd{y}
 .
\end{align*}
We remark that the integration by parts,  
$\int u \rd v 
=
u v  - \int  v \rd u$,
is valid since $u = f^{(\tau)}(y)$ is absolutely continuous for all $\tau = 0, \ldots, \alpha-1$ and $\rmd v = (\tilB_\tau(x-y) / \tau!) \rd y$ is Lebesgue integrable \cite{R1976}.
Therefore, using a telescoping sum, we obtain
\begin{align}
  \notag
  F_\alpha(x) - F_1(x)
  =
  \sum_{\tau=1}^{\alpha-1} F_{\tau+1}(x) - F_\tau(x)
  &=
  \sum_{\tau=1}^{\alpha-1}
  \frac{\tilB_{\tau+1}(x)}{(\tau+1)!} \int_0^1 f^{(\tau+1)}(y) \rd{y}
  \\\label{eq:Falpha-F1}
  &=
  \sum_{\tau=2}^\alpha
  \frac{B_\tau(x)}{\tau!} \int_0^1 f^{(\tau)}(y) \rd{y}
  \quad
  \text{for $x \in [0,1]$}
  .
\end{align}
Note that the last line is obtained by shifting the index and since $\tilB_\tau(x) = B_\tau(x)$ for $\tau \ge 2$ and $x \in [0,1]$.
Furthermore, for $\tau=1$, we can also use integration by parts with the roles of $u$ and $\rmd v$ interchanged, carefully splitting the integral at the discontinuity of $\tilB_1$, such that for $x \in [0,1]$ we have
\begin{align*}
  F_1(x)
  &=
  \int_0^{x^-} \tilB_1(x-y) f'(y) \rd y 
  +
  \int_{x^+}^1 \tilB_1(x-y) f'(y) \rd y 
  \\
  &=
  \int_0^{x^-} B_1(x-y) f'(y) \rd y 
  +
  \int_{x^+}^1 B_1(x-y+1) f'(y) \rd y 
  \\
  &=
  \Big[B_1(x-y) f(y) \Big]_{y=0}^{x^-} + \int_0^{x^-} f(y) \rd y
  +
  \Big[B_1(x-y+1) f(y) \Big]_{y=x^+}^1 + \int_{x^+}^1 f(y) \rd y
  \\
  &=
  -f(x) + B_1(x)( f(1)- f(0)) + \int_0^1 f(y) \rd y,
\end{align*}
where $x^-$ and $x^+$ are the one-sided limits from left and right, respectively.
Substituting this into~\eqref{eq:Falpha-F1} then gives, for $x \in [0,1]$,
\begin{align}
  \notag
  F_\alpha(x)
  &=
  -f(x)
  +
  \sum_{\tau=1}^\alpha \frac{B_\tau(x)}{\tau!}
  \int_0^1 f^{(\tau)}(y) \rd y
  +
  \int_0^1 f(y) \rd y
  \\\label{eq:Falpha}
  &=
  -f(x)
  +
  \sum_{\tau=0}^\alpha \frac{B_\tau(x)}{\tau!}
  \int_0^1 f^{(\tau)}(y) \rd y
  \\\label{eq:Falpha'}
  &=
  \sum_{\substack{\setu \subseteq \{1\} \\ \overline{\setu} := \{1\} \setminus \setu}} (-1)^{|\overline{\setu}|}
  \sum_{\substack{\bstau_\setu \in \{0,\ldots,\alpha\}^{|\setu|} \\ \tau_j = 0 \text{ for } j \in \overline{\setu}}}
  \left[ \prod_{j \in \setu} \frac{B_{\tau_j}(x_j)}{\tau_j!} \right]
  \int_{[0,1]^{|\setu|}} f^{(\tau_1)}(x_1) \rd x_\setu
  ,
\end{align}
where the last line is already written in the notation of the multivariate statement since this is the form we want to use further down.
In other words we have obtained the following series expansion with remainder term, for $x \in [0,1]$,
\begin{align}\label{eq:Bseries-with-remainder}
  f(x)
  &=
  \sum_{\tau=0}^\alpha \frac{B_\tau(x)}{\tau!}
  \int_0^1 f^{(\tau)}(y) \rd y
  -
  \int_0^1 \frac{\tilB_\alpha(x-y)}{\alpha!} f^{(\alpha)}(y) \rd y
  .
\end{align}

To derive the multivariate claim we will apply~\eqref{eq:Falpha} recursively, starting from the following expression, where we apply~\eqref{eq:Falpha} for the first dimension in the form of~\eqref{eq:Falpha'} such that the sums over $\setu$ and $\bstau$ can be moved to the front.
The technique is the same as in Lemma~\ref{lem:BPM}, but for clarity we write it out in full:
\begin{align*}
  &\hspace*{-1mm}
  \int_{[0,1]^d} \left[ \prod_{j=1}^d \frac{\tilB_\alpha(x_j-y_j)}{\alpha!} \right]
    f^{(\alpha,\ldots,\alpha)}(\bsy) \rd\bsy
  \\
  &=
  \int_{[0,1]^{d-1}} \left[ \prod_{j=2}^d \frac{\tilB_\alpha(x_j-y_j)}{\alpha!} \right]
    \left[ \int_0^1 \frac{\tilB_\alpha(x_1-y_1)}{\alpha!} f^{(\alpha,\ldots,\alpha)}(\bsy) \rd y_1 \right] \rd\bsy_{-\{1\}}
  \\
  &=
  \int_{[0,1]^{d-1}} \left[ \prod_{j=2}^d \frac{\tilB_\alpha(x_j-y_j)}{\alpha!} \right]
    \Bigg[ -f^{(0,\alpha,\ldots,\alpha)}(x_1,\bsy_{-\{1\}})
  \\
  &\hspace*{6cm}
      + \sum_{\tau_1=0}^\alpha \frac{B_{\tau_1}(x_1)}{\tau_1!} \int_0^1 f^{(\tau_1,\alpha,\ldots,\alpha)}(y_1, \bsy_{-\{1\}}) \rd y_1 \Bigg] \rd\bsy_{-\{1\}}
  \\
  &=
  \sum_{\substack{\setu \subseteq \{1\} \\ \overline{\setu} := \{1\} \setminus \setu}}
  (-1)^{|\overline{\setu}|}
  \sum_{\substack{\bstau_\setu \in \{0,\ldots,\alpha\}^{|\setu|} \\ \tau_j = 0 \text{ for } j \in \overline{\setu}}}
  \left[ \prod_{j\in\setu} \frac{B_{\tau_j}(x_j)}{\tau_j!} \right]
  \\
  &\hspace*{3cm}
  \int_{[0,1]^{|\setu|}}
  \left[
  \int_{[0,1]^{d-1}} \left[ \prod_{j=2}^d \frac{\tilB_\alpha(x_j-y_j)}{\alpha!} \right]
    f^{(\tau_1,\alpha,\ldots,\alpha)}(x_1,\bsy_{-\{1\}})
  \rd\bsy_{-\{1\}}
  \right]
  \rd \bsx_\setu
  ,
\intertext{where $\bsy_{-\{1\}} = (y_2,\ldots,y_d)$ is the complement w.r.t.\ the full dimensional set, i.e., $\{1,\ldots,d\} \setminus \{1\}$; at this point the part under the outside integral on the second line of the previous expression is of the form we started from and we repeat the procedure now for the second dimension to obtain}
  \\
  &=
  \sum_{\substack{\setu \subseteq \{1\} \\ \overline{\setu} := \{1\} \setminus \setu}}
  (-1)^{|\overline{\setu}|}
  \sum_{\substack{\bstau_\setu \in \{0,\ldots,\alpha\}^{|\setu|} \\ \tau_j = 0 \text{ for } j \in \overline{\setu}}}
  \left[ \prod_{j\in\setu} \frac{B_{\tau_j}(x_j)}{\tau_j!} \right]
  \int_{[0,1]^{|\setu|}} \Bigg[
  \\
  &\hspace*{3mm}
  \int_{[0,1]^{d-2}}
  \left[ \prod_{j=3}^d \frac{\tilB_\alpha(x_j-y_j)}{\alpha!} \right]
  \left[ 
    \int_0^1 \frac{\tilB_\alpha(x_2-y_2)}{\alpha!}
    f^{(\tau_1,\alpha,\ldots,\alpha)}(x_1, y_2, \bsy_{-\{1,2\}})
    \rd y_2
  \right]
  \rd\bsy_{-\{1,2\}}
  \Bigg] \rd \bsx_\setu
  \\
  &=
  \sum_{\substack{\setu \subseteq \{1\} \\ \overline{\setu} := \{1\} \setminus \setu}}
  (-1)^{|\overline{\setu}|}
  \sum_{\substack{\bstau_\setu \in \{0,\ldots,\alpha\}^{|\setu|} \\ \tau_j = 0 \text{ for } j \in \overline{\setu}}}
  \left[ \prod_{j\in\setu} \frac{B_{\tau_j}(x_j)}{\tau_j!} \right]
  \sum_{\substack{\setv \subseteq \{2\} \\ \overline{\setv} := \{2\} \setminus \setv}}
  (-1)^{|\overline{\setv}|}
  \sum_{\substack{\bstau_{\setv} \in \{0,\ldots,\alpha\}^{|\setv|} \\ \tau_j = 0 \text{ for } j \in \overline{\setv}}}
  \left[ \prod_{j\in\setv} \frac{B_{\tau_j}(x_j)}{\tau_j!} \right]
  \\
  &\hspace*{3mm}
  \int_{[0,1]^{|\setu|}} \int_{[0,1]^{|\setv|}} \left[
  \int_{[0,1]^{d-2}}
  \left[ \prod_{j=3}^d \frac{\tilB_\alpha(x_j-y_j)}{\alpha!} \right] 
    f^{(\tau_1,\tau_2,\alpha,\ldots,\alpha)}(x_1, x_2, \bsy_{-\{1,2\}})
  \rd\bsy_{-\{1,2\}}
  \right] \rd \bsx_{\setv} \rd \bsx_\setu
  \\
  &=
  \sum_{\substack{\setw \subseteq \{1,2\} \\ \overline\setw := \{1,2\} \setminus \setw}}
  (-1)^{|\overline\setw|}
  \sum_{\substack{\bstau_\setw \in \{0,\ldots,\alpha\}^{|\setw|} \\ \tau_j = 0 \text{ for } j \in \overline\setw}}
  \left[ \prod_{j\in\setw} \frac{B_{\tau_j}(x_j)}{\tau_j!} \right]
  \\
  &\hspace*{1cm}
  \int_{[0,1]^{|\setw|}}
  \left[ \int_{[0,1]^{d-2}}
  \left[ \prod_{j=3}^d \frac{\tilB_\alpha(x_j-y_j)}{\alpha!} \right] 
    f^{(\tau_1,\tau_2,\alpha,\ldots,\alpha)}(x_1, x_2, \bsy_{-\{1,2\}})
  \rd\bsy_{-\{1,2\}}
  \right]
  \rd \bsx_\setw
  ,
\intertext{and therefore}
  \\
  &=
  \sum_{\setu \subseteq \{1,\ldots,d\}}
  (-1)^{d-|\setu|}
  \sum_{\substack{\bstau_\setu \in \{0,\ldots,\alpha\}^{|\setu|} \\ \tau_j = 0 \text{ for } j \notin \setu}}
    \left[ \prod_{j\in\setu} \frac{B_{\tau_j}(x_j)}{\tau_j!} \right]
    \int_{[0,1]^{|\setu|}} f^{(\bstau)}(\bsx) \rd \bsx_\setu
  .
\end{align*}
This concludes the proof.
\end{proof}

It is useful to compare the representation of a function~$f$ as in the previous statement to another representation from the literature in \cite{DKLNS2014}.

\begin{proposition}\label{prop:representations}
  For a function $f$ on the unit cube having mixed partial derivatives up to order $\alpha-1$ in each variable which are absolutely continuous and of which the derivatives with the highest order $\alpha$ are integrable, we have the following equivalent representations for all $\bsx \in [0,1]^d$:\\
  \ri series representation in terms of Bernoulli polynomials with remainder term
\begin{multline*}
  f(\bsx)
  =
  \sum_{\emptyset \ne \setu \subseteq \{1,\ldots,d\}}
  (-1)^{|\setu|+1}
  \sum_{\substack{\bstau_\setu \in \{0,\ldots,\alpha\}^{|\setu|} \\ \tau_j = 0 \text{ for } j \notin \setu}}
  \left[ \prod_{j\in\setu} \frac{B_{\tau_j}(x_j)}{\tau_j!} \right]
  \int_{[0,1]^{|\setu|}} f^{(\bstau)}(\bsx_{-\setu},\bsy_\setu) \rd \bsy_{\setu}
  \\
  + (-1)^d
  \int_{[0,1]^d}
  \left[ \prod_{j=1}^d \frac{\tilB_\alpha(x_j-y_j)}{\alpha!} \right]
  f^{(\alpha,\ldots,\alpha)}(\bsy) \rd \bsy
  ,
\end{multline*}
  \rii series representation in terms of Bernoulli polynomials with mixed remainder terms
\begin{multline*}
  f(\bsx)
  =
  \sum_{\setu \subseteq \{1,\ldots,d\}}
  (-1)^{d-|\setu|}
  \sum_{\substack{\bstau_\setu \in \{0,\ldots,\alpha\}^{|\setu|} \\ \tau_j = \alpha \text{ for } j \notin \setu}}
  \left[ \prod_{j\in\setu} \frac{B_{\tau_j}(x_j)}{\tau_j!} \right]
  \int_{[0,1]^d}
  \left[ \prod_{j\notin\setu} \frac{\tilB_\alpha(x_j-y_j)}{\alpha!} \right]
  f^{(\bstau)}(\bsy)
  \rd\bsy
  .
\end{multline*}
\end{proposition}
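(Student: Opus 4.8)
The plan is to prove both representations by manipulating the master identity of Lemma~\ref{lem:bernpartint}, which already equates a double sum of Bernoulli polynomials against derivative integrals with the full-dimensional remainder $\int_{[0,1]^d} \bigl[\prod_{j=1}^d \tilB_\alpha(x_j-y_j)/\alpha!\bigr] f^{(\alpha,\ldots,\alpha)}(\bsy) \rd\bsy$. For part \ri, I would isolate the $\setu=\emptyset$ term on the left-hand side of Lemma~\ref{lem:bernpartint}: as noted there, that term is exactly $(-1)^d f(\bsx)$ since the empty integral is the identity. Moving every other term ($\emptyset \ne \setu$) to the right-hand side and multiplying through by $(-1)^d$ should directly yield the claimed formula, after rewriting $(-1)^d \cdot (-1)^{d-|\setu|} = (-1)^{-|\setu|} = (-1)^{|\setu|}$ and $(-1)^{|\setu|}\cdot(-1)=(-1)^{|\setu|+1}$ for the sign on the transposed sum, and $(-1)^d\cdot(-1)^d = 1$ on the remainder term. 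The only bookkeeping point is that in Lemma~\ref{lem:bernpartint} the integration variable and the evaluation point coincide (the derivative is written $f^{(\bstau)}(\bsx)$ with $\rd\bsx_\setu$), whereas the proposition writes $f^{(\bstau)}(\bsx_{-\setu},\bsy_\setu)\rd\bsy_\setu$; these are the same object since the integral marginalizes out the coordinates in $\setu$, so I would just align the dummy-variable notation.

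For part \rii I would not start from the finished identity but rather re-run the recursive argument of Lemma~\ref{lem:bernpartint} with a crucial change: instead of applying the one-dimensional expansion~\eqref{eq:Bseries-with-remainder} in \emph{every} coordinate, I would apply it only in the coordinates of a chosen subset. Concretely, the one-dimensional identity~\eqref{eq:Bseries-with-remainder} writes $f(x)$ as $\sum_{\tau=0}^\alpha (B_\tau(x)/\tau!)\int_0^1 f^{(\tau)}\rd y$ minus the single remainder $\int_0^1 (\tilB_\alpha(x-y)/\alpha!) f^{(\alpha)}(y)\rd y$. In part \rii, coordinates $j\in\setu$ receive the Bernoulli-polynomial treatment (carrying $\tau_j \in \{0,\ldots,\alpha\}$ and an integral against $\rd y_j$), while coordinates $j\notin\setu$ keep the remainder term (forcing $\tau_j=\alpha$ and an integral against $\tilB_\alpha(x_j-y_j)/\alpha!$). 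The sign $(-1)^{d-|\setu|}$ is exactly one factor of $(-1)$ for each coordinate where the remainder branch is taken, i.e.\ for each $j\notin\setu$, matching the $|{-\setu}|=d-|\setu|$ minus signs pulled from~\eqref{eq:Bseries-with-remainder}. I would make this rigorous by the same tensor-product induction as in Lemma~\ref{lem:bernpartint}: expand dimension-by-dimension, and at each dimension sum over the binary choice of ``Bernoulli branch'' versus ``remainder branch,'' merging the per-coordinate choices into the subset sum over $\setu$.

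The main obstacle I anticipate is the careful separation, in part \rii, of the two branches while keeping Fubini's theorem and the placement of evaluation points versus dummy integration variables correct; when a coordinate is handled by the Bernoulli branch its value is integrated out over $[0,1]$, whereas a remainder-branch coordinate retains $x_j$ in the Bernoulli kernel $\tilB_\alpha(x_j-y_j)$ but integrates $y_j$. One must ensure that the highest-order mixed derivative $f^{(\bstau)}(\bsy)$ appearing in the final expression is genuinely integrated over the full cube $[0,1]^d$ in every coordinate (both branches ultimately integrate their respective coordinate), which is why the outer integral is written as $\int_{[0,1]^d}\rd\bsy$ rather than over a proper subcube. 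The absolute continuity of derivatives up to order $\alpha-1$ and $L_2$-integrability of the order-$\alpha$ derivatives, assumed in the hypotheses, justify every integration by parts and every application of Fubini, exactly as invoked in the proof of Lemma~\ref{lem:bernpartint}; I would cite those same justifications rather than repeat them.
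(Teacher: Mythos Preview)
Your approach is correct and matches the paper's: part \ri is obtained by isolating the $\setu=\emptyset$ term in Lemma~\ref{lem:bernpartint} and rearranging, and part \rii is obtained by writing the one-dimensional identity~\eqref{eq:Bseries-with-remainder} as a two-term sum over $\setu\subseteq\{1\}$ (Bernoulli branch versus remainder branch) and tensoring dimension-by-dimension via the inductive set-merging argument of Lemma~\ref{lem:BPM}. One small slip: in your sign bookkeeping for \ri, the remainder term acquires the coefficient $(-1)^d\cdot 1=(-1)^d$ after multiplying through, not $(-1)^d\cdot(-1)^d=1$ as you wrote; the right-hand side of Lemma~\ref{lem:bernpartint} carries no sign to begin with.
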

\begin{proof}
Representation \ri follows immediately from Lemma~\ref{lem:bernpartint}.
For representation~\rii we start from the one-dimensional form~\eqref{eq:Bseries-with-remainder}, which, says that for all such $f$ and for all $x \in [0,1]$ we have the representation
\begin{align*}
  f(x)
  &=
  \sum_{\tau=0}^\alpha \frac{B_\tau(x)}{\tau!}
  \int_0^1 f^{(\tau)}(y) \rd y
  -
  \int_0^1 \frac{\tilB_\alpha(x-y)}{\alpha!} f^{(\alpha)}(y) \rd y
  \\
  &=
  \sum_{\substack{\setu \subseteq \{1\} \\ \overline{\setu} := \{1\} \setminus \setu}}
  (-1)^{|\overline{\setu}|}
  \sum_{\substack{\bstau_\setu \{0,\ldots,\alpha\}^{|\setu|} \\ \tau_j = \alpha \text{ for } j \in \overline{\setu}}}
  \left[\prod_{j\in\setu} \frac{B_{\tau_j}(x_j)}{\tau_j!} \right]
  \int_0^1
  \left[ \prod_{j\in\overline{\setu}} \frac{\tilB_\alpha(x_j-y_j)}{\alpha!} \right]
  f^{(\tau_1)}(y_1)
  \rd{y_1}
  .
\end{align*}
If we apply this expansion dimension by dimension then we arrive at the claimed form with the proof technique of Lemma~\ref{lem:BPM}.
\end{proof}

We note that representation \rii from Proposition~\ref{prop:representations} is equivalent to the following form which was used in \cite[Equation~$(3.13)$ in Theorem~3.5]{DKLNS2014}
\begin{multline*}
  f(\bsx)
  =
  \sum_{\setw \subseteq \{1,\ldots,d\}}
  \;\; \sum_{\setv \subseteq \setw} \;\;
  (-1)^{|\setv|}
  \sum_{\substack{\bstau_{\setw \setminus \setv} \in \{1,\ldots,\alpha\}^{|\setw \setminus \setv|} \\ \tau_j = \alpha \text{ for } j \in \setv \\ \tau_j = 0 \text{ for } j \notin \setw}}
  \left[ \prod_{j \in \setw \setminus \setv} \frac{B_{\tau_j}(x_j)}{\tau_j!} \right]
   \\
   \int_{[0,1]^d}
   \left[ \prod_{j \in \setv} \frac{\tilB_\alpha(x_j-y_j)}{\alpha!} \right]
   f^{(\bstau)}(\bsy)
   \rd \bsy
   ,
\end{multline*}
by considering $j\notin\setu$ in the formula for \rii to be $j \in \setv$ in this formula, and using $B_0(x)/0! = 1$ and we already used $\tilB_\alpha(x-y) = (-1)^\alpha \tilB_\alpha(y-x)$ in comparison with the original formula in~\cite{DKLNS2014}.
The latter formula is advantageous if considering weighted spaces or when access to the ANOVA decomposition of~$f$ is required.
The advantage of representation \ri over \rii and the equivalent form just discussed is that \ri does not contain products with other Bernoulli polynomials when considering the term with $\prod_{j=1}^d \tilB_\alpha(x_j-y_j) / \alpha!$.
We will make use of this form (modified for arbitrary boxes as in the next subsection) in Section~\ref{sec:error-analysis} to bound the projection error.

Now we are ready to state the important result of this section, namely, that the Bernoulli polynomial method is an orthogonal projection.

\begin{proposition}\label{prop:projection}
 Let $f\in\calH(\KSob_{\alpha,d})$, then we have
 \[
 F(\bsy)
 =
 \left\langle f , \KKor_{\alpha,d}(\cdot,\bsy) \right\rangle_{\KSob_{\alpha,d}}
 ,
 \]
 where $F$ is the function obtained by applying the Bernoulli polynomial method~\eqref{eq:BPM} to~$f$.
 Therefore, the transformed function $F$ does not have a bigger norm than the original function $f$,
 \[
  \|F\|_{\KKor_{\alpha,d}} 
  =
  \|F\|_{\KSob_{\alpha,d}} 
  \le
  \|f\|_{\KSob_{\alpha,d}}
  .
 \]
\end{proposition}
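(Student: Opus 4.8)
The plan is to prove the reproducing-type identity $F(\bsy) = \langle f, \KKor_{\alpha,d}(\cdot,\bsy)\rangle_{\KSob_{\alpha,d}}$ by directly expanding the right-hand side using the derivative form of the Sobolev inner product from Definition~\ref{def:Sob}, and then recognising the resulting expression as the Bernoulli polynomial method~\eqref{eq:BPM}. First I would fix $\bsy$ and write $K := \KKor_{\alpha,d}(\cdot,\bsy)$, which by~\eqref{eq:KKor} is the product $\prod_{j=1}^d (1 + (-1)^{\alpha+1} \tilB_{2\alpha}(x_j - y_j)/(2\alpha)!)$. I would plug this into the Sobolev inner product, where the outer sum runs over $\bstau \in \{0,\ldots,\alpha\}^d$ with $\setw = \{j : \tau_j = \alpha\}$, and each term integrates $f^{(\bstau)}$ against $K^{(\bstau)}$ after integrating out the complementary variables. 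The essential computation is understanding the partial derivatives of $K$ with respect to $\bsx$, using the Bernoulli differentiation rule $(\tilB_{2\alpha})^{(\tau')}/(2\alpha)! = \tilB_{2\alpha-\tau'}/(2\alpha-\tau')!$ together with the integration identities~\eqref{eq:Btilde-integral} for the inner integrals over the complementary variables.

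The key structural observation I would exploit is that the kernel $K$ factorises over dimensions, so the inner product decouples dimension-by-dimension, and within each dimension only the ``diagonal'' contributions survive after the inner integrations collapse the $\tilB$ terms. Specifically, for each $j$ the factor $1 + (-1)^{\alpha+1}\tilB_{2\alpha}(x_j-y_j)/(2\alpha)!$ contributes either the constant $1$ (matching $\tau_j = 0$) or, after integrating $f^{(\bstau)}$ against the appropriate derivative of $\tilB_{2\alpha}$, a term that matches the structure of~\eqref{eq:BPM}. The cleanest route is probably to recognise that Lemma~\ref{lem:bernpartint} already provides exactly the bridge: its right-hand side is the $d$-fold $\tilB_\alpha$ remainder integral, and repeated integration by parts (as carried out there) converts integrals of $f^{(\alpha,\ldots,\alpha)}$ against $\tilB_\alpha$ products into the Bernoulli-polynomial correction terms of~\eqref{eq:BPM}. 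I would therefore aim to show that $\langle f, K\rangle_{\KSob_{\alpha,d}}$ reduces, after carrying out the inner integrals in each term of the inner product sum, to precisely the left-hand side of Lemma~\ref{lem:bernpartint} (or its $F$-producing analogue via~\eqref{eq:BPM}), invoking the pairing $\tilB_{2\alpha} \leftrightarrow \tilB_\alpha \cdot \tilB_\alpha$ that comes from splitting $2\alpha = \alpha + \alpha$ in the integrations by parts.

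Once the identity $F = \langle f, K\rangle_{\KSob}$ is established, the norm inequality follows by a standard orthogonal-projection argument. Since $\KKor_{\alpha,d}$ is the reproducing kernel of $\calH(\KKor_{\alpha,d})$ and, for $f,g \in \calH(\KKor_{\alpha,d})$, the two inner products agree, $\langle f, \KKor_{\alpha,d}(\cdot,\bsy)\rangle_{\KSob_{\alpha,d}}$ defines the orthogonal projection $P$ of $f$ onto the closed subspace $\calH(\KKor_{\alpha,d}) \subset \calH(\KSob_{\alpha,d})$; indeed one checks $\langle f - Pf, g\rangle_{\KSob} = 0$ for all $g$ in the Korobov space by the reproducing property, so $Pf = F$ is the metric projection. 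The Pythagorean identity $\|f\|_{\KSob}^2 = \|F\|_{\KSob}^2 + \|f-F\|_{\KSob}^2$ then immediately gives $\|F\|_{\KSob} \le \|f\|_{\KSob}$, and the equality $\|F\|_{\KKor} = \|F\|_{\KSob}$ is the content of the norm-coincidence established for periodic functions (the unit-cube analogue of the Lemma preceding Proposition~\ref{prop:lattice-rule-error-ab}).

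The main obstacle will be the bookkeeping in the first paragraph: correctly tracking which derivative order of $\tilB_{2\alpha}$ appears in each term of the inner-product sum and verifying that integrating $f^{(\bstau)}$ against it over the complementary variables reproduces exactly the Bernoulli correction terms of~\eqref{eq:BPM}, including the signs $(-1)^{\alpha+1}$ and the factorial normalisations. The subtle point is that the Sobolev inner product only involves derivatives of order $0$ or $\alpha$ in each coordinate (not intermediate orders), whereas~\eqref{eq:BPM} involves all $\tau_j \in \{0,\ldots,\alpha\}$; reconciling these requires the integration-by-parts machinery of Lemma~\ref{lem:bernpartint}, which promotes the low-order Bernoulli terms out of a single high-order $\tilB_\alpha$ remainder. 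I expect the safest presentation is to reduce the claim to Lemma~\ref{lem:bernpartint} rather than recompute everything, thereby isolating all the delicate sign and index tracking into that already-proved lemma.
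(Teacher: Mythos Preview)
Your overall strategy---expand $\langle f, \KKor_{\alpha,d}(\cdot,\bsy)\rangle_{\KSob_{\alpha,d}}$ directly, observe that only the $\tau_j \in \{0,\alpha\}$ terms survive after the inner integrations against the Korobov kernel, and then invoke Lemma~\ref{lem:bernpartint} to recover the full Bernoulli expansion~\eqref{eq:BPM}---is exactly the route the paper takes, and your treatment of the norm inequality via orthogonal projection is correct and matches the paper's conclusion.

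One factual slip to correct before you write this up: you state that ``the Sobolev inner product only involves derivatives of order $0$ or $\alpha$ in each coordinate (not intermediate orders)''. This is false---Definition~\ref{def:Sob} sums over all $\bstau \in \{0,\ldots,\alpha\}^d$. What actually happens is the reverse of what you describe: the Sobolev inner product starts with all orders, but upon pairing with the Korobov kernel the intermediate orders $1 \le \tau_j \le \alpha-1$ drop out, because $\int_0^1 {\KKor_{\alpha,d}}^{(\bstau)}(\bsx,\bsy)\,\rd x_j$ vanishes for those $\tau_j$ (the derivative $\tilB_{2\alpha-\tau_j}$ integrates to zero and the constant term has already been killed by differentiation). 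So the inner product collapses to a sum over $\bstau \in \{0,\alpha\}^d$, i.e., over subsets $\setw \subseteq \{1,\ldots,d\}$, and Lemma~\ref{lem:bernpartint} is then applied \emph{subset-by-subset} (the paper writes this as~\eqref{eq:bern1-w}) to expand each $\setw$-term back into Bernoulli corrections of all orders. A final inclusion--exclusion step recombines these into~\eqref{eq:BPM}. You have the right lemma and the right endpoints; just be aware that the direction of the reduction is Sobolev-with-all-orders $\to$ only $\{0,\alpha\}$ survive $\to$ Lemma~\ref{lem:bernpartint} re-expands to~\eqref{eq:BPM}, and that the recombination over subsets $\setw$ requires an explicit inclusion--exclusion argument that you should not expect to be automatic.
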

\begin{proof}
For this proof, we introduce a shorthand notation for the result of Lemma~\ref{lem:bernpartint}.
Instead of writing $\bstau_\setu \in \{0,\ldots,\alpha\}^{|\setu|}$ we write the multiindex $\bstau \in \{0,\ldots,\alpha,\overline{0}\}^d$ where now whenever $\tau_j = \overline{0}$ this would be $j \notin \setu$ in the original notation.
Lemma~\ref{lem:bernpartint} can then be written as
\begin{multline*}
  \sum_{\substack{\bstau \in \{0,\ldots,\alpha,\overline{0}\}^{d} \\ \overline{\setu} := \{j: \tau_j = \overline{0} \}}}
  (-1)^{|\overline{\setu}|}
  \left[ \prod_{j \in -\overline{\setu}} \frac{B_{\tau_j}(x_j)}{\tau_j!} \right]
  \int_{[0,1]^{d-|\overline{\setu}|}}
  f^{(\bstau)}(\bsx) \rd \bsx_{-\overline{\setu}}
  \\=
  \int_{[0,1]^d}
  \left[ \prod_{j=1}^d \frac{\tilB_\alpha(x_j-y_j)}{\alpha!} \right]
  f^{(\alpha,\ldots,\alpha)}(\bsy) \rd \bsy
  ,
\end{multline*}
where we interpret $\overline{0} = 0$ for the derivatives and where $-\overline{\setu} = \{1,\ldots,d\} \setminus \overline{\setu}$.
In the following we will use this with $\bsx$ and $\bsy$ interchanged and where the derivatives of order $\alpha$ in the right hand side appear for a certain subset $\setw \subseteq \{1,\ldots,d\}$ such that the left hand side is modified accordingly.
More precisely, for given $\setw \subseteq \{1,\ldots,d\}$ and $\bstau_{-\setw} = \bstau_{\{1,\ldots,d\}\setminus\setw} = \bszero$ we will use
\begin{multline*}
  \sum_{\substack{\bstau_\setw \in \{0,\ldots,\alpha,\overline{0}\}^{|\setw|} \\ \overline{\setu} := \{j \in \setw: \tau_j = \overline{0} \} \\ \tau_j = 0 \text{ for } j \notin \setw}}
  (-1)^{|\overline{\setu}|}
  \left[ \prod_{j\in\setw\setminus\overline{\setu}} \frac{B_{\tau_j}(y_j)}{\tau_j!} \right]
  \int_{[0,1]^{|\setw|-|\overline{\setu}|}}
  f^{(\bstau)}(\bsy_{\setw\setminus\overline{\setu}}, \bsy_{\overline{\setu}}, \bsx_{-\setw}) \rd \bsy_{\setw\setminus\overline{\setu}}
  \\=
  \int_{[0,1]^{|\setw|}}
  \left[ \prod_{j\in\setw} \frac{\tilB_\alpha(y_j-x_j)}{\alpha!} \right]
  f^{(\bsalpha_\setw,\bszero_{-\setw})}(\bsx_\setw,\bsx_{-\setw}) \rd \bsx_\setw
  ,
\end{multline*}
which, after integrating on both sides over $\bsx_{-\setw}$ gives
\begin{multline}\label{eq:bern1-w}
  \sum_{\substack{\bstau_\setw \in \{0,\ldots,\alpha,\overline{0}\}^{|\setw|} \\ \overline{\setu} := \{j \in \setw: \tau_j = \overline{0} \} \\ \tau_j = 0 \text{ for } j \notin \setw}}
  (-1)^{|\overline{\setu}|}
  \left[ \prod_{j\in\setw\setminus\overline{\setu}} \frac{B_{\tau_j}(y_j)}{\tau_j!} \right]
  \int_{[0,1]^{d-|\overline{\setu}|}}
  f^{(\bstau)}(\bsy) \rd \bsy_{-\overline{\setu}}
  \\=
  \int_{[0,1]^d}
  \left[ \prod_{j\in\setw} \frac{\tilB_\alpha(y_j-x_j)}{\alpha!} \right]
  f^{(\bsalpha_\setw,\bszero_{-\setw})}(\bsx) \rd \bsx
  .
\end{multline}

We now start by expanding the inner product of the Sobolev space $\KSob_{\alpha,d}$ for $f$ and the kernel of the Korobov space $\KKor_{\alpha,d}$ with one leg fixed to $\bsy$:
 \begin{align*}
    &\left\langle f , \KKor_{\alpha,d}(\cdot,\bsy) \right\rangle_{\KSob_{\alpha,d}}
    \\
    &\quad=
    \sum_{ \substack{ \bstau \in \{0,\ldots,\alpha \}^d \\ \setw := \{j: \tau_j=\alpha \}} }  
    \int_{[0,1]^{|\setw|}} 
    \left(
    \int_{[0,1]^{d-|\setw|}} f^{(\bstau)} (\bsx) \rd \bsx_{-\setw}
    \right) 
    \left(
    \int_{[0,1]^{d-|\setw|}} {\KKor_{\alpha,d}}^{(\bstau)}  (\bsx,\bsy) \rd \bsx_{-\setw}
    \right) 
    \rd \bsx_\setw,    
\end{align*}
here we have, cf.~\eqref{eq:KKor} and~\eqref{eq:Btilde-integral},
\[
 {\KKor_{\alpha,d}}^{(\bstau)}(\bsx,\bsy)
 =
 \prod_{j=1}^d \left( \delta_{\tau_j,0} \, [B_0]^2 + (-1)^{\alpha+1} \frac{\tilB_{2\alpha-\tau_j}(x_j-y_j)}{(2\alpha-\tau_j)!}  \right)
 ,
\]
where we have put $B_0 = B_0(x) = 1$ for ease of generalization to the box later where $B_0^{[a_j,b_j]}(x) = 1/(b_j-a_j)$ (and thus integrates to one on $[a_j,b_j]$).
Hence integrating with respect to $x_j$ with $j \in -\setw$, i.e., $\tau_j \ne \alpha$, vanishes unless $\tau_j = 0$.
Therefore only $\tau_j = 0$ and $\tau_j = \alpha$ remain and we obtain, with $-\setw = \{1,\ldots,d\} \setminus \setw$ and using~\eqref{eq:bern1-w},
 \begin{align}
    \notag
    &\left\langle f, \KKor_{\alpha,d}(\cdot,\bsy) \right\rangle_{\KSob_{\alpha,d}}
    \\\notag
    &\quad=
    \sum_{ \substack{ \bstau \in \{0,\alpha \}^d \\ \setw := \{j: \tau_j=\alpha \}} }
    \int_{[0,1]^{d}} 
    f^{(\bstau)} (\bsx) 
    \left[ \prod_{j\in-\setw} B_0(y_j) \right]
    \left[ \prod_{j\in \setw} (-1)^{\alpha+1} \frac{\tilB_\alpha (x_j-y_j)}{\alpha!} \right]
    \rd \bsx 
    \\\notag
    &\quad=
    \sum_{\setw \subseteq \{1,\ldots,d\}}
    (-1)^{|\setw|}
    \left[ \prod_{j\in-\setw} \frac{B_0(y_j)}{0!} \right]
    \int_{[0,1]^{d}} 
    \left[ \prod_{j\in \setw} \frac{\tilB_\alpha (y_j-x_j)}{\alpha!} \right]
    f^{(\bsalpha_\setw,\bszero_{-\setw})}(\bsx)
    \rd \bsx 
    \\\notag
    &\quad=
    \sum_{\setw \subseteq \{1,\ldots,d\}}
    (-1)^{|\setw|}
    \left[ \prod_{j\in-\setw} \frac{B_0(y_j)}{0!} \right]
    \sum_{\substack{\bstau_\setw \in \{ 0,\ldots,\alpha,\overline{0}\}^{|\setw|} \\ \overline{\setu} := \{j\in\setw: \tau_j=\overline{0}\} \\ \tau_j = 0 \text{ for } j \notin \setw} }
  \hspace{-3mm}  (-1)^{|\overline{\setu}|} %% FIXME: sorry, manually formatting
    \left[ \prod_{j \in \setw\setminus\overline{\setu}} \frac{B_{\tau_j}(y_j)}{\tau_j!} \right]
      \int_{[0,1]^{d-|\overline{\setu}|}}
     \hspace{-3mm} f^{(\bstau)}(\bsy) %% FIXME: sorry, manually formatting
      \rd \bsy_{-\overline{\setu}}
    \\\notag
    &\quad=
    \sum_{\setw \subseteq \{1,\ldots,d\}}
    (-1)^{|\setw|}
    \sum_{\substack{\bstau_\setw \in \{ 0,\ldots,\alpha,\overline{0}\}^{|\setw|} \\ \overline{\setu} := \{j\in\setw: \tau_j=\overline{0}\} \\ \tau_j = 0 \text{ for } j \notin \setw} }
    (-1)^{|\overline{\setu}|}
    \left[ \prod_{j \in -\overline{\setu}} \frac{B_{\tau_j}(y_j)}{\tau_j!} \right]
      \int_{[0,1]^{d-|\overline{\setu}|}}
      f^{(\bstau)}(\bsy)
      \rd \bsy_{-\overline{\setu}}
     \\\label{eq:BPM-overlinezero}
    &\quad=
    \sum_{\substack{\bstau \in \{1,\ldots,\alpha,\overline{0}\}^d \\ \overline{\setu} := \{j: \tau_j = \overline{0} \}}}
    (-1)^{d-|\overline{\setu}|}
    \left[ \prod_{j \in -\overline{\setu}} \frac{B_{\tau_j}(y_j)}{\tau_j!} \right]
    \int_{[0,1]^{d-|\overline{\setu}|}} f^{(\bstau)}(\bsy) \rd\bsy_{-{\overline{\setu}}}
    ,
 \end{align}
 where for the last equality we used the inclusion-exclusion principle
 \begin{align*}
 \sum_{\bstau \in \{1,\ldots,\alpha,\overline{0} \} ^d } A(\bstau)
 &=
 \sum_{\bstau \in \{0,\ldots,\alpha,\overline{0} \} ^d } A(\bstau) - \sum_{j=1}^d  \sum_{\substack{\bstau_{-\{j\}} \in \{0,\ldots,\alpha,\overline{0} \} ^{d-1} \\ \tau_j=0}} A(\bstau)+
 \cdots
 +
 (-1)^d \sum_{\bstau= \bszero} A(\bstau)
 \\
 &=
 \sum_{\setw \subseteq \{1,\ldots,d\}}
  (-1)^{d-|\setw|}
  \sum_{\substack{\bstau_\setw \in \{0,\ldots,\alpha,\overline{0}\}^{|\setw|} \\ \tau_j = 0 \text{ for } j \notin \setw}}
  A(\bstau)
 ,
 \end{align*}
 for any function $A(\bstau)$ defined for multiindices $\bstau \in \{0,\ldots,\alpha,\overline{0} \}^d$, where here
 \begin{align*}
   A(\bstau)
   &=
    (-1)^{d-|\overline{\setu}(\bstau)|}
    \left[ \prod_{j \in -\overline{\setu}(\bstau)} \frac{B_{\tau_j}(y_j)}{\tau_j!} \right]
      \int_{[0,1]^{d-|\overline{\setu}(\bstau)|}}
      f^{(\bstau)}(\bsy)
      \rd \bsy_{-\overline{\setu}(\bstau)}
   \quad \text{with} \quad
   \overline{\setu}(\bstau)
   :=
   \{ j : \tau_j = \overline{0} \}
 \end{align*}
and we used $(-1)^{|\setw|} (-1)^{|\overline{\setu}|} = (-1)^{2d} (-1)^{-|\setw|} (-1)^{-|\overline{\setu}|} = (-1)^{d-|\setw|} (-1)^{d-|\overline{\setu}}|$.
Note that in the derivation we wrote $\overline{\setu} := \{ j \in \setw : \tau_j = \overline{0}\}$ under the sums with $j \in \setw$ for clarity, but here in fact we could have just written $ \{ j : \tau_j = \overline{0}\}$ in the derivation as well.
We conclude the proof by noting that~\eqref{eq:BPM-overlinezero} and~\eqref{eq:BPM} are the same by our definition of the symbol~$\overline{0}$.
\end{proof}
\begin{figure}
\centering
\includegraphics[scale=1]{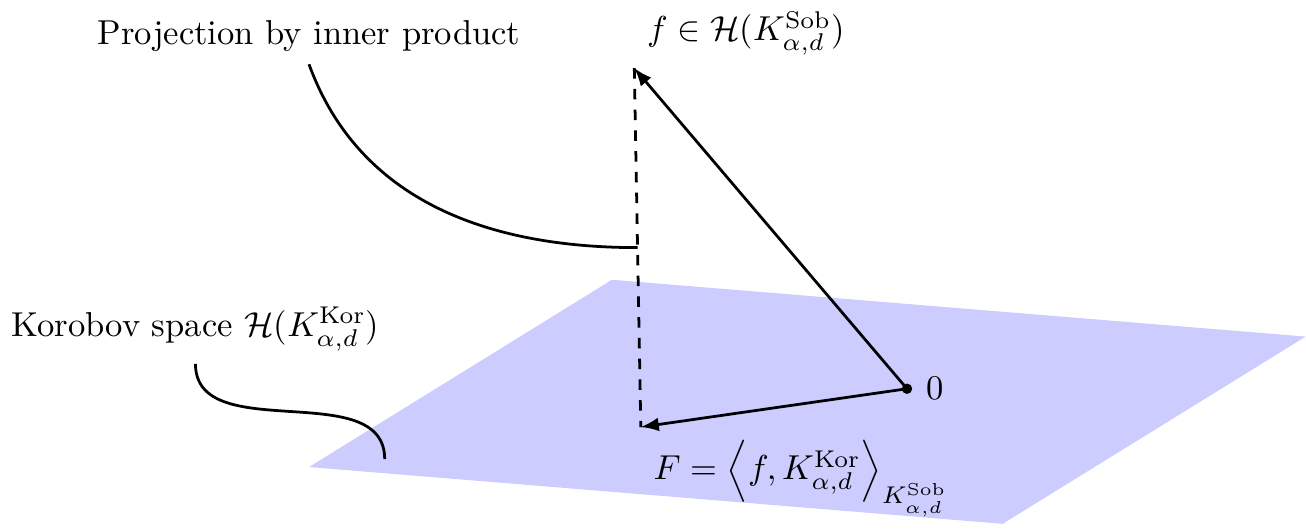}
\caption{The Bernoulli polynomial method as an orthogonal projection.}
\end{figure}

\subsection{The Bernoulli polynomial method on a box}

 We first generalize Lemma~\ref{lem:bernpartint} to a general box $[\bsa,\bsb]$. 

 \begin{lemma}\label{lem:bernpartintab}
   For a function $f$ on the box $[\bsa,\bsb]$ having mixed partial derivatives up to order $\alpha-1$ in each variable which are absolutely continuous and of which the derivatives with the highest order $\alpha$ are integrable, the following holds for all $\bsx \in [\bsa,\bsb]$:
   \begin{multline*}
  \sum_{\setu \subseteq \{1,\ldots,d\}}
  (-1)^{d-|\setu|}
  \sum_{\substack{\bstau_\setu \in \{0,\ldots,\alpha\}^{|\setu|} \\ \tau_j = 0 \text{ for } j \notin \setu}}
  \left[ \prod_{j\in\setu} \frac{B^{[a_j,b_j]}_{\tau_j}(x_j)}{\tau_j!} \right]
  \int_{[\bsa_{\setu},\bsb_{\setu}]} f^{(\bstau)}(\bsx) \rd \bsx_{\setu}
  \\
  =
  \int_{[\bsa,\bsb]}
  \left[ \prod_{j=1}^d \frac{\tilB^{[a_j,b_j]}_\alpha(x_j-y_j+a_j)}{\alpha!} \right]
  f^{(\alpha,\ldots,\alpha)}(\bsy) \rd \bsy
  .
\end{multline*}
 \end{lemma}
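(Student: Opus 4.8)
The plan is to reduce the claim to the unit-cube identity of Lemma~\ref{lem:bernpartint} by an affine change of variables, rather than repeating the integration-by-parts argument from scratch. Concretely, for a given $f$ on $[\bsa,\bsb]$ I would introduce the pulled-back function $g(\boldsymbol{u}) := f(x_1(u_1),\ldots,x_d(u_d))$ on $[0,1]^d$, where $x_j(u_j) := a_j + (b_j-a_j)\,u_j$. Since each $u_j \mapsto x_j(u_j)$ is an affine bijection of $[0,1]$ onto $[a_j,b_j]$ with affine inverse, composition preserves absolute continuity of the mixed derivatives up to order $\alpha-1$ and preserves integrability of the top-order derivative; hence $g$ satisfies the hypotheses of Lemma~\ref{lem:bernpartint}, and I may apply that lemma to $g$ verbatim.

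The second step is to translate the unit-cube identity for $g$ back into the stated box identity for $f$ using three scaling relations, all immediate from the definitions in Section~\ref{sec:bernoulli}. First, the chain rule gives $g^{(\bstau)}(\boldsymbol{u}) = \big[\prod_{j} (b_j-a_j)^{\tau_j}\big]\,f^{(\bstau)}(\bsx)$, with $x_j = x_j(u_j)$. Second, from~\eqref{eq:def-Bab} the scaled Bernoulli polynomials satisfy $B_{\tau_j}(u_j) = (b_j-a_j)^{1-\tau_j}\,B^{[a_j,b_j]}_{\tau_j}(x_j)$, and from the Fourier definition of $\tilB^{[a_j,b_j]}_\alpha$ one checks $\tilB_\alpha(u_j-v_j) = (b_j-a_j)^{1-\alpha}\,\tilB^{[a_j,b_j]}_\alpha(x_j-y_j+a_j)$, where $y_j = x_j(v_j)$; note that the argument $x_j-y_j+a_j$ is exactly what makes $((x_j-y_j+a_j)-a_j)/(b_j-a_j) = u_j-v_j$, which explains why the shift by $a_j$ appears in the statement. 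Third, each one-dimensional substitution contributes a Jacobian factor $\rd u_j = (b_j-a_j)^{-1}\rd x_j$ and turns the domain $[0,1]$ into $[a_j,b_j]$.

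The crux, and the only real work, is the bookkeeping showing that for each coordinate $j$ all powers of $(b_j-a_j)$ cancel. On the left-hand side, for $j\in\setu$ (so $\tau_j$ ranges over $\{1,\ldots,\alpha\}$ and $\tau_j=0$ otherwise), the Bernoulli factor contributes $(b_j-a_j)^{1-\tau_j}$, the chain rule contributes $(b_j-a_j)^{\tau_j}$, and the Jacobian of the integral over $\bsx_\setu$ contributes $(b_j-a_j)^{-1}$, summing to $1-\tau_j+\tau_j-1=0$; the sign $(-1)^{d-|\setu|}$ and the index sets are carried through unchanged. On the right-hand side every coordinate carries order $\alpha$, so the periodic-Bernoulli factor contributes $(b_j-a_j)^{1-\alpha}$, the chain rule $(b_j-a_j)^{\alpha}$, and the Jacobian over $\bsy$ contributes $(b_j-a_j)^{-1}$, again totalling $0$. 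Once this cancellation is verified coordinatewise, the unit-cube identity for $g$ becomes precisely the asserted box identity for $f$, completing the proof. Alternatively one could imitate the proof of Lemma~\ref{lem:bernpartint} line by line, using the endpoint values $B^{[a,b]}_1(a)=-1/2$, $B^{[a,b]}_1(b)=1/2$ and the scaled integral identities for $\tilB^{[a,b]}_\tau$ established in Section~\ref{sec:bernoulli}; but the change-of-variables route avoids repeating the telescoping sum and the delicate splitting at the discontinuity of $\tilB_1$.
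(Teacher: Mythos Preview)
Your argument is correct and takes a genuinely different route from the paper. Whereas the paper reproves the one-dimensional integration-by-parts identity directly for the scaled Bernoulli polynomials on $[a,b]$ (re-running the telescoping sum and the splitting at the discontinuity of $\tilB_1^{[a,b]}$ to obtain the representation~\eqref{eq:Bseries-with-remainder-ab}) and then invokes the same induction as in Lemma~\ref{lem:bernpartint}, you instead pull back to the unit cube via the affine change of variables and apply Lemma~\ref{lem:bernpartint} as a black box. Your scaling bookkeeping is right: the relations $B_{\tau}(u)=(b-a)^{1-\tau}B^{[a,b]}_\tau(x)$ and $\tilB_\alpha(u-v)=(b-a)^{1-\alpha}\tilB^{[a,b]}_\alpha(x-y+a)$ follow at once from~\eqref{eq:def-Bab} and the Fourier definition of $\tilB^{[a,b]}_\alpha$, and the powers of $b_j-a_j$ cancel exactly as you compute. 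One small slip: for $j\in\setu$ the index $\tau_j$ in the lemma ranges over $\{0,\ldots,\alpha\}$, not $\{1,\ldots,\alpha\}$, but your cancellation $1-\tau_j+\tau_j-1=0$ is valid for $\tau_j=0$ as well (indeed $B_0(u)=(b-a)\,B^{[a,b]}_0(x)$), so nothing breaks. The change-of-variables route is more economical; the paper's direct route has the side benefit of establishing~\eqref{eq:Bseries-with-remainder-ab} explicitly, which it reuses in the proof of Proposition~\ref{prop:representations-ab}---though that formula would equally follow by the same affine substitution from~\eqref{eq:Bseries-with-remainder}.
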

 \begin{proof}
 The proof is similar to the proof of Lemma~\ref{lem:bernpartint}.
 We show the one-dimensional results which follow from the properties of the scaled Bernoulli polynomials from Section~\ref{sec:bernoulli}.
 Define
 \begin{align*}
   F^{[a,b]}_\tau(x)
   &:=
   \int_a^b f^{(\tau)}(y) \frac{\tilB_\tau^{[a,b]}(x-y+a)}{\tau!} \rd y
   ,
 \end{align*}
 then, as is the case for the interval $[0,1]$, we here obtain, for $\tau=1,\ldots,\alpha-1$, using integration by parts,
 \begin{align*}
   F^{[a,b]}_{\tau+1}(x) - F^{[a,b]}_\tau(x)
   &=
   \frac{\tilB^{[a,b]}_{\tau+1}(x)}{(\tau+1)!} \int_a^b f^{(\tau+1)}(y) \rd y
   .
 \end{align*}
 We also obtain for $F^{[a,b]}_1$ using integration by parts with the terms interchanged and for $x\in[a,b]$
 \begin{align*}
  \int_a^b \tilB^{[a,b]}_1(x-y+a) f'(y) \rd y
  &=
  \int_a^{x^-} \tilB^{[a,b]}_1(x-y+a) f'(y) \rd y 
  +
  \int_{x^+}^b \tilB^{[a,b]}_1(x-y+a) f'(y) \rd y 
  \\
  &=
  \int_a^{x^-} B^{[a,b]}_1(x-y+a) f'(y) \rd y 
  +
  \int_{x^+}^b B^{[a,b]}_1(x-y+b) f'(y) \rd y 
  \\
  &=
  \left[B^{[a,b]}_1(x-y+a) f(y) \right]_{y=a}^{x^-}
  +
  \int_a^{x^-} \frac{f(y)}{b-a} \rd y
  \\
  &\quad+
  \left[B^{[a,b]}_1(x-y+b) f(y) \right]_{y=x^+}^{b}
  +
  \int_{x^+}^b \frac{f(y)}{b-a} \rd y
  \\
  &=
  -f(x) + B^{[a,b]}_1(x) (f(b)-f(a)) + B^{[a,b]}_0(x) \int_a^b f(y) \rd y
  ,
\end{align*}
where we used the scaling of our Bernoulli polynomials on the box, cf.~\eqref{eq:def-Bab}.
 Therefore we have the following representation for one-dimensional functions on $[a,b]$
 \begin{align}\label{eq:Bseries-with-remainder-ab}
   f(x)
   &=
   \sum_{\tau=0}^\alpha \frac{B^{[a,b]}_\tau(x)}{\tau!} \int_a^b f^{(\tau)}(y) \rd y
   -
   \int_a^b \frac{\tilB^{[a,b]}_\alpha(x-y+a)}{\alpha!} f^{(\alpha)}(y) \rd y
   .
 \end{align}
 Using the same induction argument as in the proof of Lemma~\ref{lem:bernpartint} the result for the multivariate case now follows.
 \end{proof}

Using this result we can represent functions in $\calH(\KSobab_{\alpha,d})$ similarly as those in $\calH(\KSob_{\alpha,d})$, cf.\ Proposition~\ref{prop:representations}.

\begin{proposition}\label{prop:representations-ab}
  For a function $f$ on the box $[\bsa,\bsb]$ having mixed partial derivatives up to order $\alpha-1$ in each variable which are absolutely continuous and of which the derivatives with the highest order $\alpha$ are integrable, we have the following equivalent representations for all $\bsx \in [\bsa,\bsb]$: \\
  \ri series representation in terms of Bernoulli polynomials with remainder term
\begin{multline*}
  f(\bsx)
  =
  \sum_{\emptyset \ne \setu \subseteq \{1,\ldots,d\}}
  (-1)^{|\setu|+1}
  \sum_{\substack{\bstau_\setu \in \{0,\ldots,\alpha\}^{|\setu|} \\ \tau_j = 0 \text{ for } j \notin \setu}}
  \left[ \prod_{j\in\setu} \frac{B^{[a_j,b_j]}_{\tau_j}(x_j)}{\tau_j!} \right]
  \int_{[\bsa_\setu,\bsb_\setu]} f^{(\bstau)}(\bsx_{-\setu},\bsy_\setu) \rd \bsy_{\setu}
  \\
  + (-1)^d
  \int_{[\bsa,\bsb]}
  \left[ \prod_{j=1}^d \frac{\tilB^{[a_j,b_j]}_\alpha(x_j-y_j+a_j)}{\alpha!} \right]
  f^{(\alpha,\ldots,\alpha)}(\bsy) \rd \bsy
  ,
\end{multline*}
  \rii series representation in terms of Bernoulli polynomials with mixed remainder terms
\begin{multline*}
  f(\bsx)
  =
  \sum_{\setu \subseteq \{1,\ldots,d\}}
  (-1)^{d-|\setu|}
  \sum_{\substack{\bstau_\setu \in \{0,\ldots,\alpha\}^{|\setu|} \\ \tau_j = \alpha \text{ for } j \notin \setu}}
  \\
  \left[ \prod_{j\in\setu} \frac{B^{[a_j,b_j]}_{\tau_j}(x_j)}{\tau_j!} \right]
  \int_{[\bsa,\bsb]}
  \left[ \prod_{j\notin\setu} \frac{\tilB^{[a_j,b_j]}_\alpha(x_j-y_j+a_j)}{\alpha!} \right]
  f^{(\bstau)}(\bsy)
  \rd\bsy
  .
\end{multline*}
\end{proposition}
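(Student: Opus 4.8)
The plan is to mirror the proof of Proposition~\ref{prop:representations} step for step, replacing the standard Bernoulli polynomials $B_\tau$ and $\tilB_\tau$ by their scaled counterparts $B^{[a_j,b_j]}_\tau$ and $\tilB^{[a_j,b_j]}_\tau$, and relying on the fact established in Section~\ref{sec:bernoulli} that these scaled polynomials satisfy exactly the same integral, derivative, symmetry, and integration-by-parts identities as the unscaled ones. The only genuinely new feature on a box is that $B^{[a_j,b_j]}_0(x_j) = 1/(b_j-a_j)$ rather than $1$, cf.~\eqref{eq:def-Bab}, but since every product over ordinary Bernoulli polynomials is written as $\prod_{j\in\setu}$ (and never as a full product over $j=1,\ldots,d$), this normalization is already absorbed correctly into the stated formulas.

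First I would obtain representation~\ri directly from Lemma~\ref{lem:bernpartintab}. In that identity I isolate the term $\setu=\emptyset$ on the left-hand side: there the product over $j\in\setu$ is the empty product and the integral is over the empty domain, so the term equals $(-1)^d f(\bsx)$. Moving this term to the right-hand side and multiplying through by $(-1)^d$ turns the remaining sum over $\emptyset\ne\setu$ into the claimed series with remainder, since $(-1)^d(-1)^{d-|\setu|} = (-1)^{|\setu|}$, and transferring the sum to the other side flips this sign to $(-1)^{|\setu|+1}$, while the right-hand side picks up the overall prefactor $(-1)^d$ in front of the $d$-dimensional periodic-Bernoulli integral.

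Next I would prove representation~\rii starting from the one-dimensional identity~\eqref{eq:Bseries-with-remainder-ab}. Exactly as in the unit-cube case, I rewrite that identity in the set notation over $\setu\subseteq\{1\}$, where the present dimension contributes an ordinary factor $B^{[a,b]}_{\tau_1}(x_1)/\tau_1!$ with $\tau_1\in\{0,\ldots,\alpha\}$ when $1\in\setu$, and a periodic remainder factor $\tilB^{[a,b]}_\alpha(x_1-y_1+a)/\alpha!$ with $\tau_1=\alpha$ when $1\notin\setu$. I then apply this one-dimensional expansion dimension by dimension, merging the partial sums after each step with the inductive merging technique of Lemma~\ref{lem:BPM}. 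This assembles the ordinary Bernoulli factors over $j\in\setu$, the scaled periodic Bernoulli factors over $j\notin\setu$, the accumulated sign $(-1)^{d-|\setu|}$, and a single $d$-dimensional integral into the claimed form.

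The main obstacle is not conceptual but bookkeeping: I must track the accumulating signs and keep the index constraints straight (that $\tau_j\in\{0,\ldots,\alpha\}$ for $j\in\setu$ while $\tau_j=\alpha$ for $j\notin\setu$), and in particular make sure the $1/(b_j-a_j)$ factors arising from $B^{[a_j,b_j]}_0$ are retained in the $\tau_j=0$ terms of the $\prod_{j\in\setu}$ products. Because the scaled polynomials obey the same identities as the standard ones, no new analytic estimate is required, and each merging step is formally identical to the corresponding step in the proofs of Lemma~\ref{lem:BPM} and Proposition~\ref{prop:representations}.
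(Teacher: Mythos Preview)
Your proposal is correct and follows essentially the same approach as the paper: representation~\ri is obtained directly from Lemma~\ref{lem:bernpartintab} by isolating the $\setu=\emptyset$ term, and representation~\rii is obtained by applying the one-dimensional expansion~\eqref{eq:Bseries-with-remainder-ab} dimension by dimension with the merging technique of Lemma~\ref{lem:BPM}. The paper's own proof simply states that the argument is the same as for Proposition~\ref{prop:representations} using~\eqref{eq:Bseries-with-remainder-ab}, so your write-up is in fact more explicit than the paper's.
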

\begin{proof}
  The proof is similar to the proof of Proposition~\ref{prop:representations}, making use of the one-dimensional representation~\eqref{eq:Bseries-with-remainder-ab}.
\end{proof}

We are now finally in a position to show that $\KSobab_{\alpha,d}$ is in fact the reproducing kernel of $\calH(\KSobab_{\alpha,d})$ by making use of the representations just given.

\begin{proposition}\label{prop:kernelSobab}
  The function $\KSobab_{\alpha,d}$, given in~\eqref{eq:Sob-ab-kernel}, is the reproducing kernel corresponding to the inner product~\eqref{eq:Sob-ab-innerproduct} of the space $\calH(\KSobab_{\alpha,d})$.
  That is
  \begin{align*}
    \langle f , \KSobab_{\alpha,d}(\cdot,\bsy) \rangle_{\KSobab_{\alpha,d}}
    =
    f(\bsy)
    \qquad
    \forall \bsy \in [\bsa,\bsb]
    \text{ and }
    \forall f \in \calH(\KSobab_{\alpha,d})
    .
  \end{align*}
 \end{proposition}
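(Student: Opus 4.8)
The plan is to verify the reproducing property $\langle f, \KSobab_{\alpha,d}(\cdot,\bsy)\rangle_{\KSobab_{\alpha,d}} = f(\bsy)$ directly: expand the inner product~\eqref{eq:Sob-ab-innerproduct} against the explicit kernel~\eqref{eq:Sob-ab-kernel}, evaluate the inner integrals factor by factor using the differentiation and integration rules for $B^{[a_j,b_j]}_{\tau_j}$ and $\tilB^{[a_j,b_j]}_{2\alpha}$ collected in Section~\ref{sec:bernoulli}, and then recognize the result as representation \rii of Proposition~\ref{prop:representations-ab}. Before that one should note that $\KSobab_{\alpha,d}(\cdot,\bsy) \in \calH(\KSobab_{\alpha,d})$, which is immediate since each one-dimensional factor is a finite combination of the smooth polynomials $B^{[a_j,b_j]}_{\tau_j}$ plus the term $\tilB^{[a_j,b_j]}_{2\alpha}(\cdot-y_j+a_j)$, whose $\alpha$-th derivative is, up to sign, the bounded and hence $L_2$ function $\tilB^{[a_j,b_j]}_\alpha$.

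First I would fix $\bstau \in \{0,\ldots,\alpha\}^d$ with $\setw := \{j : \tau_j = \alpha\}$ and compute $\int_{[\bsa_{-\setw},\bsb_{-\setw}]} {\KSobab_{\alpha,d}}^{(\bstau)}(\bsx,\bsy)\rd\bsx_{-\setw}$, where the derivative acts on the first argument. Factoring over dimensions, for $j \notin \setw$ (so $\tau_j < \alpha$ and $x_j$ is integrated out) every piece vanishes except the one that survives differentiation-and-integration to leave $B^{[a_j,b_j]}_{\tau_j}(y_j)/\tau_j!$: the constant piece handles $\tau_j = 0$ since $B^{[a_j,b_j]}_0 = 1/(b_j-a_j)$, the middle products supply $1 \le \tau_j \le \alpha-1$, while the special $\alpha$-term and the $\tilB^{[a_j,b_j]}_{2\alpha}$-term integrate to zero. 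For $j \in \setw$ (so $\tau_j = \alpha$ and $x_j$ is retained) only the last two pieces survive, giving the two-term factor $B^{[a_j,b_j]}_\alpha(y_j)/\alpha! + (-1)^{\alpha+1}\tilB^{[a_j,b_j]}_\alpha(x_j-y_j+a_j)/\alpha!$.

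Next I would substitute this back, expand each of the $|\setw|$ two-term factors over a subset $\setv\subseteq\setw$ (picking the $\tilB$-term on $\setv$, the $B_\alpha(y_j)$-term on $\setw\setminus\setv$), and observe that the $x_j$ for $j\in\setw\setminus\setv$ now occur only inside $f^{(\bstau)}$, so the outer and inner integrals combine into one integral over all of $[\bsa,\bsb]$. Summing over $\setv$ first and letting $\bstau$ range over multiindices with $\tau_j=\alpha$ on $\setv$ and $\tau_j\in\{0,\ldots,\alpha\}$ off $\setv$, the two kinds of $B(y_j)$-factors merge into a single $\prod_{j\notin\setv} B^{[a_j,b_j]}_{\tau_j}(y_j)/\tau_j!$. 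Finally, applying the symmetry $\tilB^{[a,b]}_\alpha(x-y+a) = (-1)^\alpha\tilB^{[a,b]}_\alpha(y-x+a)$ flips the arguments and absorbs the accumulated sign $(-1)^{(\alpha+1)|\setv|}$ down to $(-1)^{|\setv|} = (-1)^{d-|\setu|}$ with $\setu := \{1,\ldots,d\}\setminus\setv$; at that point the expression is exactly representation \rii of $f(\bsy)$, which proves the claim.

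I expect the main obstacle to be the bookkeeping in the two middle steps: correctly tracking which of the four pieces of each kernel factor survives in the two regimes $j\in\setw$ versus $j\notin\setw$, and then matching the combinatorial re-indexing (the swap of the $\bstau$- and $\setv$-summations together with the sign produced by the $\tilB_\alpha$ symmetry) against Proposition~\ref{prop:representations-ab}. This is the precise analogue of the computation in the proof of Proposition~\ref{prop:projection}, except that here nothing is discarded, so the \emph{full} kernel, not merely its Korobov part, must be accounted for.
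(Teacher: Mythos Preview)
Your proposal is correct and follows essentially the same route as the paper: differentiate and then integrate the product kernel factor by factor, obtaining $\prod_{j\notin\setw} B^{[a_j,b_j]}_{\tau_j}(y_j)/\tau_j!$ times $\prod_{j\in\setw}\bigl(B^{[a_j,b_j]}_\alpha(y_j)/\alpha! + (-1)^{\alpha+1}\tilB^{[a_j,b_j]}_\alpha(x_j-y_j+a_j)/\alpha!\bigr)$, and then identify the resulting expression with representation~\rii of Proposition~\ref{prop:representations-ab}. The paper stops at the last identification with a one-line remark, whereas you spell out the subset expansion over $\setv\subseteq\setw$ and the $(-1)^{(\alpha+1)|\setv|}\cdot(-1)^{\alpha|\setv|}=(-1)^{|\setv|}$ sign computation via the $\tilB_\alpha$ symmetry; this extra bookkeeping is correct and is exactly what is implicit in the paper's final sentence.
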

 \begin{proof}
   We need to calculate 
   \begin{multline*}
    \langle f , \KSobab_{\alpha,d}(\cdot,\bsy) \rangle_{\KSobab_{\alpha,d}} 
    \\
    =
      \hspace{-2.5mm} \sum_{ \substack{ \bstau \in \{0,\ldots,\alpha \}^d \\ \setw := \{j: \tau_j=\alpha \}} } %% FIXME: sorry, manually formatting  
    \int_{[\bsa_\setw,\bsb_\setw]} 
    \left(
    \int_{[\bsa_{-\setw},\bsb_{-\setw}]} f^{(\bstau)} (\bsx) \rd \bsx_{-\setw}
    \right) 
    \left(
    \int_{[\bsa_{-\setw},\bsb_{-\setw}]} {\KSobab_{\alpha,d}}^{(\bstau)}  (\bsx,\bsy) \rd \bsx_{-\setw}
    \right) 
    \rd \bsx_\setw
    .
   \end{multline*}
   For $\bstau \in \{0,\ldots,\alpha\}^d$, we have
   \begin{multline*}
     {\KSobab_{\alpha,d}}^{(\bstau)}(\bsx, \bsy)
     =
     \prod_{j=1}^d
     \Bigg(
     \sum_{\tau'=\tau_j}^{\alpha-1} \frac{B_{\tau'-\tau_j} ^{[a_j,b_j]} (x_j) }{(\tau'-\tau_j)!} \frac{B_{\tau'} ^{[a_j,b_j]} (y_j)}{\tau'!} 
     + 
     (b_j-a_j) \frac{B_{\alpha-\tau_j} ^{[a_j,b_j]} (x_j)}{(\alpha-\tau_j)!} \frac{B_\alpha^{[a_j,b_j]}(y_j)}{\alpha!}
     \\+
     (-1)^{\alpha+1} \frac{\tilB_{2\alpha-\tau_j}^{[a_j,b_j]}(x_j-y_j+a_j)}{(2\alpha-\tau_j)!}
     \Bigg)
     .
   \end{multline*}
Therefore, integrating over the $x_j$ for which $\tau_j\ne\alpha$ gives
\begin{multline*}
  \int_{[\bsa_{-\setw},\bsb_{-\setw}]} {\KSobab_{\alpha,d}}^{(\bstau)}(\bsx,\bsy)\rd \bsx_{-\setw}
  \\=
  \left[\prod_{j\notin\setw} \frac{B_{\tau_j} ^{[a_j,b_j]} (y_j)}{\tau_j!} \right] \prod_{j\in\setw} \left( \frac{B_\alpha ^{[a_j,b_j]} (y_j)}{\alpha!}   + (-1)^{\alpha+1} \frac{\tilB_\alpha^{[a_j,b_j]}(x_j-y_j+a_j)}{\alpha!}   \right)
  .
\end{multline*}
It follows that
   \begin{multline*}
    \langle f , \KSobab_{\alpha,d}(\cdot,\bsy) \rangle_{\KSobab_{\alpha,d}} 
    =
    \sum_{ \substack{ \bstau \in \{0,\ldots,\alpha \}^d \\ \setw := \{j: \tau_j=\alpha \}} }
    \int_{[\bsa_\setw,\bsb_\setw]} 
    \left(
    \int_{[\bsa_{-\setw},\bsb_{-\setw}]} f^{(\bstau)} (\bsx) \rd \bsx_{-\setw}
    \right)
    \\
    \left[
    \prod_{j\notin\setw} \frac{B_{\tau_j} ^{[a_j,b_j]} (y_j)}{\tau_j!}
    \right]
    \prod_{j\in\setw} \left( \frac{B_\alpha ^{[a_j,b_j]} (y_j)}{\alpha!}   + (-1)^{\alpha+1} \frac{\tilB_\alpha^{[a_j,b_j]}(x_j-y_j+a_j)}{\alpha!}   \right)
    \rd \bsx_\setw 
    \end{multline*}
    which is equivalent to representation \rii in Proposition~\ref{prop:representations-ab}.
 \end{proof}

We now define the Bernoulli polynomial method on a box $[\bsa,\bsb]$.
For a function $f$ on the box $[\bsa,\bsb]$ having mixed partial derivatives up to order $\alpha-1$ in each variable which are absolutely continuous and of which the derivatives with the highest order $\alpha$ are integrable, we define
\begin{align}
    \notag
    &F^{[\bsa,\bsb]}(\bsx)
    \\
    \label{eq:BPM-ab}
    &\quad:=
    f(\bsx)
    +
    \sum_{\substack{\bszero \ne \bstau \in \{0,\ldots,\alpha\}^d \\ \setu := \supp(\bstau)}}
      (-1)^{|\setu|}
      \left[ \prod_{j \in \setu} \frac{B^{[a_j,b_j]}_{\tau_j}(x_j)}{\tau_j!} \right]
        \sum_{\setw\subseteq\setu} (-1)^{|\setw|}
          f^{(\bstau_\setu-\bsone_\setu, \bszero_{-\setu})}(\bsa_\setw,\bsb_{\setu\setminus\setw}, \bsx_{-\setu})
    \\
    \notag
    &\quad\hphantom{:}=
    f(\bsx)
    +
      \sum_{\substack{\bszero \ne \bstau \in \{0,\ldots,\alpha\}^d \\ \setu := \supp(\bstau)}}
      (-1)^{|\setu|}
      \left[ \prod_{j \in \setu} \frac{B^{[a_j,b_j]}_{\tau_j}(x_j)}{\tau_j!} \right]
      \int_{[\bsa_\setu,\bsb_\setu]} f^{(\bstau)}(\bsx) \rd\bsx_{\setu}
  .
\end{align}
The next result is the orthogonal projection property for the Bernoulli polynomial method on a box, cf.\ Proposition~\ref{prop:projection}.

\begin{proposition}\label{prop:projection-ab}
  Let $f \in \calH(\KSobab_{\alpha,d})$, then the function $F^{[\bsa,\bsb]}\in \calH(\KKorab_{\alpha,d})$, obtained by the Bernoulli polynomial method on the box $[\bsa,\bsb]$, cf.~\eqref{eq:BPM-ab}, is equivalent to the orthogonal projection on $\calH(\KKorab_{\alpha,d})$, i.e., for all $\bsx \in [\bsa,\bsb]$,
  \begin{align*}
    F^{[\bsa,\bsb]}(\bsx)
    &=
    \langle f, \KKorab_{\alpha,d}(\cdot,\bsx) \rangle_{\KSobab_{\alpha,d}}
    .
  \end{align*}
  Furthermore
  \begin{align*}
    \int_{[\bsa,\bsb]} F^{[\bsa,\bsb]}(\bsx) \rd\bsx
    &=
    \int_{[\bsa,\bsb]} f(\bsx) \rd\bsx
  \end{align*}
  and
  \begin{align*}
    \|F^{[\bsa,\bsb]}\|_{\KKorab_{\alpha,d}}
    =
    \|F^{[\bsa,\bsb]}\|_{\KSobab_{\alpha,d}}
    \le
    \|f\|_{\KSobab_{\alpha,d}}
    .
  \end{align*}
\end{proposition}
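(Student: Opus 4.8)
The plan is to establish the three claims in turn, the central one being the projection identity $F^{[\bsa,\bsb]}(\bsx) = \langle f, \KKorab_{\alpha,d}(\cdot,\bsx)\rangle_{\KSobab_{\alpha,d}}$; once this is in hand, the integral-preservation and norm statements follow with little extra work. I would run the proof of the identity in parallel to that of Proposition~\ref{prop:projection}, replacing the unit-cube objects by their boxed counterparts: $B_{\tau_j}$ by $B^{[a_j,b_j]}_{\tau_j}$, the argument $x_j-y_j$ of the periodic Bernoulli polynomials by $x_j-y_j+a_j$, the domains $[0,1]$ by $[a_j,b_j]$, and Lemma~\ref{lem:bernpartint} by its boxed version Lemma~\ref{lem:bernpartintab}. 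The one genuinely new bookkeeping feature, already flagged in the text after~\eqref{eq:BPM}, is that $B^{[a_j,b_j]}_0(x_j)=1/(b_j-a_j)$ is no longer equal to $1$, so the $B_0$-factors that were invisible on the unit cube must now be carried through every step.

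For the identity itself I would first expand $\langle f, \KKorab_{\alpha,d}(\cdot,\bsx)\rangle_{\KSobab_{\alpha,d}}$ using the inner product~\eqref{eq:Sob-ab-innerproduct} and differentiate the kernel~\eqref{eq:Sob-ab-kernel} in its first slot. Exactly as in the unit-cube case, the $\bstau$-derivative of the boxed Korobov kernel factorises as $\prod_{j=1}^d\bigl(\delta_{\tau_j,0}\,[B^{[a_j,b_j]}_0]^2 + (-1)^{\alpha+1}\,\tilB^{[a_j,b_j]}_{2\alpha-\tau_j}(x_j-y_j+a_j)/(2\alpha-\tau_j)!\bigr)$. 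Integrating out the coordinates $x_j$ with $\tau_j\ne\alpha$ kills the periodic-Bernoulli term (it integrates to zero over a full period) and forces $\tau_j=0$ there, each such coordinate contributing $\int_{a_j}^{b_j}[B^{[a_j,b_j]}_0]^2\rd x_j = 1/(b_j-a_j) = B^{[a_j,b_j]}_0(y_j)$. Only indices $\bstau\in\{0,\alpha\}^d$ survive, and after using the symmetry $\tilB^{[a_j,b_j]}_\alpha(x_j-y_j+a_j)=(-1)^\alpha\tilB^{[a_j,b_j]}_\alpha(y_j-x_j+a_j)$ to pull out a sign $(-1)^{|\setw|}$ (with $\setw=\{j:\tau_j=\alpha\}$), I would invoke the boxed analogue of~\eqref{eq:bern1-w} supplied by Lemma~\ref{lem:bernpartintab} to turn the pure $\alpha$-derivative integral into the Bernoulli sum. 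A final inclusion--exclusion reindexing in the $\{0,\ldots,\alpha,\overline{0}\}$-notation, identical to the one closing Proposition~\ref{prop:projection}, collapses the double sum into exactly the expression~\eqref{eq:BPM-ab} for $F^{[\bsa,\bsb]}$.

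Granting the identity, the remaining two claims are short. For the integral, every correction term in~\eqref{eq:BPM-ab} carries a product $\prod_{j\in\setu}B^{[a_j,b_j]}_{\tau_j}(x_j)/\tau_j!$ with $\setu=\supp(\bstau)\ne\emptyset$ and each $\tau_j\ge1$; since $\int_{a_j}^{b_j}B^{[a_j,b_j]}_{\tau_j}(x_j)\rd x_j=0$ for $\tau_j\ge1$ by~\eqref{eq:def-Bab}, integrating over $[\bsa,\bsb]$ annihilates every correction and leaves $\int_{[\bsa,\bsb]}F^{[\bsa,\bsb]}=\int_{[\bsa,\bsb]}f$. For the norm, I would read the identity through Hilbert-space geometry: the earlier lemma shows $\calH(\KKorab_{\alpha,d})$ is isometrically embedded in $\calH(\KSobab_{\alpha,d})$, hence a closed subspace, so that $\KKorab_{\alpha,d}(\cdot,\bsx)$ is its reproducing kernel and the map $f\mapsto\langle f,\KKorab_{\alpha,d}(\cdot,\bsx)\rangle_{\KSobab_{\alpha,d}}$ is precisely the orthogonal projection onto it. Thus $F^{[\bsa,\bsb]}\in\calH(\KKorab_{\alpha,d})$, and $\|F^{[\bsa,\bsb]}\|_{\KKorab_{\alpha,d}}=\|F^{[\bsa,\bsb]}\|_{\KSobab_{\alpha,d}}\le\|f\|_{\KSobab_{\alpha,d}}$, the equality being that lemma and the inequality the norm-nonincreasing property of an orthogonal projection.

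The main obstacle is the combinatorial identity of the second paragraph: keeping the $\{0,\ldots,\alpha,\overline{0}\}$-index bookkeeping straight while the non-trivial $1/(b_j-a_j)$ factors propagate, and verifying that they recombine correctly with the $B^{[a_j,b_j]}_0$ terms produced by Lemma~\ref{lem:bernpartintab}, so that the final sum is genuinely~\eqref{eq:BPM-ab} rather than an expression differing by stray scaling constants. Everything else is either a direct transcription of the unit-cube argument or an appeal to standard projection theory.
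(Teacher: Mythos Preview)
Your proposal is correct and follows exactly the approach the paper takes: the paper's proof of this proposition is a one-liner stating that it is the equivalent of the proof of Proposition~\ref{prop:projection} but making use of Lemma~\ref{lem:bernpartintab}, which is precisely the parallel-argument strategy you outline (and you even anticipate the $B^{[a_j,b_j]}_0=1/(b_j-a_j)$ bookkeeping issue the text flags). Your treatment of the integral-preservation and norm claims is more explicit than the paper's, but entirely in the same spirit.
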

\begin{proof}
  The proof is the equivalent of the proof of Proposition~\ref{prop:projection} but then making use of Lemma~\ref{lem:bernpartintab}.
\end{proof}

\section{Error analysis for integration over $\R^{\lowercase{d}}$}\label{sec:error-analysis}

In this section, the integration error for approximating an integral over $\R^d$ using a scaled lattice rule is shown.
Our proposed algorithm is simple: first we truncate the domain to a box $[\bsa,\bsb]$,
 and then we apply a lattice rule to compute the integral $\int_{[\bsa,\bsb]} f(\bsx) \rd \bsx$.
 We assume that the function $f$ and its derivatives decay fast enough, so that we can exploit the fact that the bigger box we truncate to, the smaller the orthogonal complement of the projected function becomes.
We start with the following lemma.

  \begin{lemma}\label{lem:cuberror-ab}
   Let $\Lambda(\bsz,n)$ be the nodes of an $n$-point lattice rule with generating vector~$\bsz$.
   The integration error of using the scaled lattice rule, with nodes $\bsp^{[\bsa,\bsb]}_i$, to integrate a function $f\in \calH(\KSobab_{\alpha,d})$ is bounded by
 \begin{multline*}
    \left|
    \frac{\prod_{j=1}^d (b_j-a_j)}{n} \sum_{i=1}^n f(\bsp_i^{[\bsa,\bsb]})
    -
    \int_{[\bsa,\bsb]} f(\bsx) \rd \bsx
    \right|
    \\
    \le
    \|f\|_{\KSobab_{\alpha,d}}
    \;
    \left( \prod_{j=1}^d \max(1, b_j-a_j)^{\alpha+1/2} \right)
    \left( \sum_{\bszero \ne \bsh \in \Lambda^\perp(\bsz,n)} [r_{\alpha,d}(\bsh)]^{-2} \right)^{1/2} 
    \\+
   \left| \frac{\prod_{j=1}^d (b_j-a_j)}{n} \sum_{i=1}^n
   \left(
   F^{[\bsa,\bsb]}(\bsp_i^{[\bsa,\bsb]})-f(\bsp_i^{[\bsa,\bsb]})
   \right)
   \right|
   ,   
 \end{multline*}
 where the term $(\sum_{\bszero \ne \bsh \in \Lambda^\perp(\bsz,n)} [r_{\alpha,d}(\bsh)]^{-2})^{1/2}$ is the worst-case error for a standard lattice rule on the unit cube $[0,1]^d$.
 There exist lattice rules $\Lambda(\bsz,n)$ for any $n \ge 2$, such that the worst-case error for integration in $\KKor_{\alpha,d}$ on the unit cube $[0,1]^d$ is bounded like
 \[
  \left(\sum_{\bszero \ne \bsh \in \Lambda^\perp(\bsz,n)} [r_{\alpha,d}(\bsh)]^{-2} \right)^{1/2}
  \le
  C_{\alpha,d} \, \frac{(\log n)^{\alpha d}}{n^\alpha},
 \]
 where $C_{\alpha,d}$ is a constant only depending on $\alpha$ and~$d$.
\end{lemma}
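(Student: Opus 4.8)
The plan is to split the error into a lattice-rule error for the \emph{periodic} projection and a correction term measuring non-periodicity, and then invoke the results already established for the box. Write $Q_n(g) := \frac{\prod_{j=1}^d (b_j-a_j)}{n} \sum_{i=1}^n g(\bsp_i^{[\bsa,\bsb]})$ for the scaled lattice rule and $I(g) := \int_{[\bsa,\bsb]} g(\bsx) \rd\bsx$, so that the quantity to be bounded is $|Q_n(f) - I(f)|$. The central observation is that the projected function $F^{[\bsa,\bsb]}$ has the same integral as $f$ over the box, i.e.\ $I(F^{[\bsa,\bsb]}) = I(f)$, by Proposition~\ref{prop:projection-ab}. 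Using this to replace $I(f)$ and inserting $\pm Q_n(F^{[\bsa,\bsb]})$, the triangle inequality gives
\[
  |Q_n(f) - I(f)|
  \le
  |Q_n(F^{[\bsa,\bsb]}) - I(F^{[\bsa,\bsb]})|
  +
  |Q_n(F^{[\bsa,\bsb]}) - Q_n(f)|
  ,
\]
where the second summand is exactly the stated non-periodicity term.

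For the first summand I would note that $F^{[\bsa,\bsb]} \in \calH(\KKorab_{\alpha,d})$ (again by Proposition~\ref{prop:projection-ab}), so it is a genuine element of the Korobov space on the box and Proposition~\ref{prop:lattice-rule-error-ab} applies directly, yielding
\[
  |Q_n(F^{[\bsa,\bsb]}) - I(F^{[\bsa,\bsb]})|
  \le
  \|F^{[\bsa,\bsb]}\|_{\KKorab_{\alpha,d}}
  \left( \prod_{j=1}^d \max(1, b_j-a_j)^{\alpha+1/2} \right)
  \left( \sum_{\bszero \ne \bsh \in \Lambda^\perp(\bsz,n)} [r_{\alpha,d}(\bsh)]^{-2} \right)^{1/2}
  .
\]
Finally the norm bound $\|F^{[\bsa,\bsb]}\|_{\KKorab_{\alpha,d}} \le \|f\|_{\KSobab_{\alpha,d}}$ from Proposition~\ref{prop:projection-ab} turns this into the first term of the claim. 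This settles the inequality; none of these steps requires new estimates, since all the analytic work sits in the already-proved projection and lattice-rule-error propositions.

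The second assertion, the existence of lattice rules whose Korobov worst-case error on the unit cube decays like $C_{\alpha,d} (\log n)^{\alpha d}/n^\alpha$, is a classical fact about rank-$1$ lattice rules and I would cite \cite{N1992,SJ1994}. For a self-contained argument the standard route is an averaging (mean-over-$\bsz$) bound combined with Jensen's inequality: for any $\lambda \in (1/(2\alpha), 1]$ one has $\sum_{\bsh} a_{\bsh} \le (\sum_{\bsh} a_{\bsh}^\lambda)^{1/\lambda}$ with $a_{\bsh} = [r_{\alpha,d}(\bsh)]^{-2}$, and then averaging the inner sum $\sum_{\bszero \ne \bsh \in \Lambda^\perp(\bsz,n)} a_{\bsh}^\lambda$ over generating vectors $\bsz$ (for prime $n$) and counting how many $\bsz$ satisfy $\bsh \cdot \bsz \equiv 0 \pmod n$ produces a bound whose $1/\lambda$-th power is of order $n^{-2\alpha+\epsilon}$ up to constants, whence a generating vector meeting the average exists. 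The main obstacle is precisely this part: obtaining the sharp logarithmic factor $(\log n)^{\alpha d}$ rather than merely the weaker $O(n^{-\alpha+\epsilon})$ requires the more careful counting and an $n$-dependent choice of $\lambda$ from the classical proof, which is why I would lean on the cited references rather than reproduce it here.
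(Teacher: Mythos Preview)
Your proof is correct and follows essentially the same route as the paper: split via $I(f)=I(F^{[\bsa,\bsb]})$ and the triangle inequality, apply Proposition~\ref{prop:lattice-rule-error-ab} to the periodic projection, and use the norm bound from Proposition~\ref{prop:projection-ab}. The paper likewise dispatches the existence of good lattice rules by citing \cite{SJ1994} rather than reproducing the averaging argument, so your treatment of that part is if anything more detailed than necessary.
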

\begin{proof}
  We use Proposition~\ref{prop:projection-ab} to obtain
  \begin{multline*}
    \frac{\prod_{j=1}^d (b_j-a_j)}{n} \sum_{i=1}^n f(\bsp_i^{[\bsa,\bsb]})
    -
    \int_{[\bsa,\bsb]} f(\bsx) \rd \bsx
    \\
    =
    \frac{\prod_{j=1}^d (b_j-a_j)}{n} \sum_{i=1}^n F^{[\bsa,\bsb]}(\bsp_i^{[\bsa,\bsb]})
    -
    \int_{[\bsa,\bsb]} F^{[\bsa,\bsb]}(\bsx) \rd \bsx
    \\
    -
    \frac{\prod_{j=1}^d (b_j-a_j)}{n} \sum_{i=1}^n 
    \left(
      F^{[\bsa,\bsb]}(\bsp_i^{[\bsa,\bsb]}) - f(\bsp_i^{[\bsa,\bsb]})
    \right)
    .
  \end{multline*}
  The result now follows from applying the triangle inequality and Proposition~\ref{prop:lattice-rule-error-ab} and the fact that $F^{[\bsa,\bsb]}$ is obtained by a projection such that $\|F^{[\bsa,\bsb]}\|_{\KKorab_{\alpha,d}} = \|F^{[\bsa,\bsb]}\|_{\KSobab_{\alpha,d}} \le \|f\|_{\KSobab_{\alpha,d}}$.
 For the existence of such lattice rules, we refer to~\cite{SJ1994}.
\end{proof}

The previous lemma tells us that the integration error of a non-periodic function on the box by a lattice rule is bounded by a sum of two terms: the integration error of the projected (and periodic) function $F^{[\bsa,\bsb]}$ and the projection error, which is the difference $F^{[\bsa,\bsb]} - f$.
We will first show that this projection error can be driven to zero by enlarging the box $[\bsa,\bsb]$ and asking the function $f$ and its derivatives to decay towards infinity.
We consider two decay conditions: one for which the function and its derivatives decay exponentially fast, and one for which the decay is only polynomial. Both conditions are quite natural if one assumes the integrand to be the product of a function with a probability density. E.g., we would have exponential decay in this sense for any polynomial multiplied with the normal density.
From the construction of $F^{[\bsa,\bsb]}$ we can then show pointwise convergence to $f$.

\begin{proposition}\label{prop:projection-error}
 Let $F^{[\bsa,\bsb]}$ be the periodic function on the box $[\bsa,\bsb]$ obtained by applying the Bernoulli polynomial method on the box~\eqref{eq:BPM-ab} to a function $f$ on the box $[\bsa,\bsb]$ having mixed partial derivatives up to order $\alpha-1$ in each variable which are absolutely continuous and of which the derivatives with the highest order $\alpha$ are integrable. Then the following hold true.
 \\
 \ri If $f$ satisfies the exponential decay condition
  \begin{align}\label{eq:exp-decay}
    \|f\|_{\alpha,\beta,p,q}
    &:=
    \sup_{\substack{\bsx\in\R^d \\ \bstau\in\{0,\ldots,\alpha-1\}^d}}
     \left| \exp(\beta \, \|\bsx\|_p^q) \, f^{(\bstau)}(\bsx) \right|
    <
    \infty
    ,
  \end{align}
  for some $\beta > 0$, $1 \le p \le \infty$ and $1 \le q < \infty$, then the projection error can be bounded for all $\bsx \in [\bsa,\bsb]$ by
  \begin{align*}
    \left| F^{[\bsa,\bsb]}(\bsx) - f(\bsx) \right|
    &\le
    (\alpha+1)^d \left[ \prod_{j=1}^d \max(1, b_j-a_j)^{\alpha-1} \right]
    \,
    \|f\|_{\alpha,\beta,p,q}
    \,
    \exp(-\beta \min_{1 \le j \le d}\min\{|a_j|^q,|b_j|^q\})
    .
  \end{align*}
  \\
  \rii If $f$ satisfies the polynomial decay condition
  \begin{align}\label{eq:poly-decay}
   \|f \|_{\alpha,\beta,p} 
 :=
 \sup_{\substack{\bsx\in\R^d \\ \bstau\in\{0,\ldots,\alpha-1\}^d }} \left| \|\bsx\|_p^{\beta} \, f^{(\bstau)}(\bsx) \right|
 <
 \infty,
  \end{align}
  for some $\beta > 0$ and  $1 \le p \le \infty$, then the projection error can be bounded for all $\bsx \in [\bsa,\bsb]$ by
  \begin{align*}
    \left| F^{[\bsa,\bsb]}(\bsx) - f(\bsx) \right|
    &\le
    (\alpha+1)^d \left[ \prod_{j=1}^d \max(1, b_j-a_j)^{\alpha-1} \right]
    \,
    \|f\|_{\alpha,\beta,p}
    \,
    \left(\min_{1 \le j \le d}\min\{|a_j|,|b_j|\}\right)^{-\beta}
    .
  \end{align*}
\end{proposition}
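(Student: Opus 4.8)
The plan is to work directly from the boundary-difference form of the Bernoulli polynomial method on the box, i.e.\ the first expression for $F^{[\bsa,\bsb]}$ in~\eqref{eq:BPM-ab} rather than its integral form. The decisive observation is that in that form
\begin{align*}
  F^{[\bsa,\bsb]}(\bsx) - f(\bsx)
  =
  \sum_{\substack{\bszero \ne \bstau \in \{0,\ldots,\alpha\}^d \\ \setu := \supp(\bstau)}}
  (-1)^{|\setu|}
  \left[ \prod_{j \in \setu} \frac{B^{[a_j,b_j]}_{\tau_j}(x_j)}{\tau_j!} \right]
  \sum_{\setw\subseteq\setu} (-1)^{|\setw|}
  f^{(\bstau_\setu-\bsone_\setu, \bszero_{-\setu})}(\bsa_\setw,\bsb_{\setu\setminus\setw}, \bsx_{-\setu})
  ,
\end{align*}
every derivative that appears carries a multiindex $(\bstau_\setu-\bsone_\setu,\bszero_{-\setu}) \in \{0,\ldots,\alpha-1\}^d$, since $\tau_j \in \{1,\ldots,\alpha\}$ for $j \in \setu$. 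This is exactly the range of derivatives for which the decay conditions~\eqref{eq:exp-decay} and~\eqref{eq:poly-decay} are stated, so they apply to each of the $2^{|\setu|}$ boundary evaluations. Using the integral form instead would force $\alpha$-th order derivatives to appear, which the hypotheses do not control, so insisting on this form is the crux.

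First I would take absolute values and apply the triangle inequality. For each boundary point $(\bsa_\setw,\bsb_{\setu\setminus\setw},\bsx_{-\setu})$, since $\setu \ne \emptyset$ at least one coordinate $j \in \setu$ equals $a_j$ or $b_j$; using $\|\cdot\|_p \ge \|\cdot\|_\infty$ this yields $\|(\bsa_\setw,\bsb_{\setu\setminus\setw},\bsx_{-\setu})\|_p \ge \min_{1\le j\le d}\min\{|a_j|,|b_j|\}$. In case~\ri the factor $\exp(-\beta\|\cdot\|_p^q)$ is therefore at most $\exp(-\beta\min_{1\le j\le d}\min\{|a_j|^q,|b_j|^q\})$, and in case~\rii the factor $\|\cdot\|_p^{-\beta}$ is at most $(\min_{1\le j\le d}\min\{|a_j|,|b_j|\})^{-\beta}$, so each boundary value is bounded by $\|f\|_{\alpha,\beta,p,q}$ (resp.\ $\|f\|_{\alpha,\beta,p}$) times this common factor.

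The key cancellation comes next. Bounding the Bernoulli product with~\eqref{eq:BerBound} gives $\prod_{j\in\setu}|B^{[a_j,b_j]}_{\tau_j}(x_j)|/\tau_j! \le 2^{-|\setu|}\prod_{j\in\setu}(b_j-a_j)^{\tau_j-1}$, and the factor $2^{-|\setu|}$ exactly absorbs the $2^{|\setu|}$ terms of the inner boundary sum. Using $0 \le \tau_j-1 \le \alpha-1$ I would replace each $(b_j-a_j)^{\tau_j-1}$ by $\max(1,b_j-a_j)^{\alpha-1}$ and extend the product from $\setu$ to all of $\{1,\ldots,d\}$, which only increases it. What remains is a pure count of multiindices, $\sum_{\bszero\ne\bstau}1 = \sum_{\emptyset\ne\setu}\alpha^{|\setu|} = (\alpha+1)^d - 1 \le (\alpha+1)^d$. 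Collecting the factors produces exactly the stated bounds.

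I expect the only genuine subtlety to be the two points just flagged: choosing the boundary-difference form so that no uncontrolled order-$\alpha$ derivative ever appears, and noticing the precise $2^{\pm|\setu|}$ cancellation that yields the clean constant $(\alpha+1)^d$. Everything else is routine triangle-inequality estimation, and the polynomial case~\rii is handled verbatim, merely substituting the polynomial decay estimate for the exponential one.
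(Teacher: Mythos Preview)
Your proposal is correct and follows essentially the same argument as the paper's own proof: both start from the boundary-difference form of~\eqref{eq:BPM-ab}, bound the Bernoulli factors via~\eqref{eq:BerBound}, observe that each evaluation point $(\bsa_\setw,\bsb_{\setu\setminus\setw},\bsx_{-\setu})$ lies on the boundary of the box and invoke $\|\cdot\|_p \ge \|\cdot\|_\infty$, and then let the factor $2^{-|\setu|}$ from the Bernoulli bound cancel the $2^{|\setu|}$ boundary terms. You spell out explicitly the final counting step $(\alpha+1)^d-1 \le (\alpha+1)^d$ and the passage to $\max(1,b_j-a_j)^{\alpha-1}$ which the paper leaves under ``from here the result follows,'' but there is no substantive difference in approach.
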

\begin{proof}
  \ri
  Using the definition of $F^{[\bsa,\bsb]}$, see~\eqref{eq:BPM-ab}, the bound on the maximum magnitude of the Bernoulli polynomials~\eqref{eq:BerBound} and the decay condition~\eqref{eq:exp-decay} we obtain
  \begin{align*}
    &\left| F^{[\bsa,\bsb]}(\bsx) - f(\bsx) \right|
    \\
    &\quad=
    \left|
    \sum_{\substack{\bszero \ne \bstau \in \{0,\ldots,\alpha\}^d \\ \setu := \supp(\bstau)}}
      (-1)^{|\setu|}
      \left[ \prod_{j \in \setu} \frac{B^{[a_j,b_j]}_{\tau_j}(x_j)}{\tau_j!} \right]
        \sum_{\setw\subseteq\setu} (-1)^{|\setw|}
          f^{(\bstau_\setu-\bsone_\setu, \bszero_{-\setu})}(\bsa_\setw,\bsb_{\setu\setminus\setw}, \bsx_{-\setu})
    \right|
    \\
    &\quad\le
    \sum_{\substack{\bszero \ne \bstau \in \{0,\ldots,\alpha\}^d \\ \setu := \supp(\bstau)}}
      \left[ \prod_{j \in \setu} \frac{(b_j-a_j)^{\tau_j-1}}{2} \right]
      2^{|\setu|} \,
      \|f\|_{\alpha,\beta,p,q} \,
      \max_{\bsx \in \delta([\bsa,\bsb])} \exp(-\beta \, \|\bsx\|_p^q)
    \\
    &\quad\le
    \sum_{\substack{\bszero \ne \bstau \in \{0,\ldots,\alpha\}^d \\ \setu := \supp(\bstau)}}
      \left[ \prod_{j \in \setu} \frac{(b_j-a_j)^{\tau_j-1}}{2} \right]
      2^{|\setu|} \,
      \|f\|_{\alpha,\beta,p,q} \,
      \max_{\bsx \in \delta([\bsa,\bsb])} \exp(-\beta \, \|\bsx\|_\infty^q)
    \\
    &\quad\le
    \sum_{\substack{\bszero \ne \bstau \in \{0,\ldots,\alpha\}^d \\ \setu := \supp(\bstau)}}
      \left[ \prod_{j \in \setu} \frac{(b_j-a_j)^{\tau_j-1}}{2} \right]
      2^{|\setu|} \,
      \|f\|_{\alpha,\beta,p,q} \, \exp(-\beta \min_{j\in\setu}\min\{|a_j|^q,|b_j|^q\})
      ,
  \end{align*}
  where we used that the point $(\bsa_\setw,\bsb_{\setu\setminus\setw}, \bsx_{-\setu})$ is always on the boundary of the box $[\bsa,\bsb]$, which we denoted by $\delta([\bsa,\bsb])$ in the derivation above, since $\setu \ne \emptyset$.
  Next we used that $\|\cdot\|_\infty^q \le \|\cdot\|_p^q$ for $p, q \ge 1$.
  From here the result follows.
  \\
  \rii For the second claim the proof is similar.
\end{proof}

As we mentioned in Section~\ref{sec:intro}, the remaining part of the total error is the truncation error 
\[
   \left|
    \int_{\R^d} f(\bsx) \rd \bsx
    -
    \int_{[\bsa,\bsb]} f(\bsx) \rd \bsx
  \right|.
\]
We are going to bound this truncation error where the appropriate truncation box is given depending on the decay condition.
We will propose to use a box $[-a,a]^d$ with a single parameter~$a > 0$.
If the user knows more about the anisotropy of the integrand's decay, or if the decay is off-center, then this knowledge can be used to recenter and rescale the decay in the different directions.

\begin{proposition}\label{prop:truncation}
\ri
Suppose an integrand function $f$ satisfies the exponential decay condition~\eqref{eq:exp-decay} for $\alpha = 1$,
\[
 \|f\|_{1,\beta,p,q}
 =
 \sup_{\bsx\in\R^d}
 \left| \exp(\beta \, \|\bsx \|_p^q) \, f(\bsx) \right|
 <
 \infty
 ,
\]%
for some $\beta > 0$, $1 \le p \le \infty$ and $1 \le q < \infty$,
then, by choosing the box to be $[-a,a]^d$, for some $a > 0$,
the truncation error is bounded for any suitable $\alpha \in \N$ by
\begin{align*}
  &
  \left|
    \int_{\R^d} f(\bsx) \rd \bsx
    -
    \int_{[-a,a]^d} f(\bsx) \rd \bsx
  \right|
  \le
  \frac{2^d \, d}{\beta^{d/q} \, q} \, \lceil d/q \rceil! \,
  \|f\|_{\alpha,\beta,p,q} \, \frac{\exp(-\beta \, a^q)}{\beta \, a^q} \, \max\{1, (\beta \, a^q)^{d/q}\}
  .
\end{align*}
\\
\rii
Suppose an integrand function $f$ satisfies the polynomial decay condition~\eqref{eq:poly-decay} for $\alpha = 1$,
\[
 \|f \|_{1,\beta,p}
 =
 \sup_{\bsx\in\R^d} \left| \|\bsx\|_p^{\beta } \, f(\bsx) \right|
 <
 \infty,
\]%
for some $\beta > d$ and $1 \le p \le \infty$,
then, by choosing the box
to be $[-a, a]^d$, for some $a > 0$,
the truncation error is bounded for any suitable $\alpha \in \N$ by
\begin{align*}
  &
  \left|
    \int_{\R^d} f(\bsx) \rd \bsx
    -
    \int_{[-a,a]^d} f(\bsx) \rd \bsx
  \right|
  \le
  \frac{2^d \, d}{\beta-d} \,
  \|f\|_{\alpha,\beta,p} \, a^{-\beta+d}
  .
\end{align*}
\end{proposition}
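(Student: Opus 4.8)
The plan is to bound the truncation error by the tail integral of $|f|$ outside the box and then exploit the decay. Since $\int_{\R^d} f(\bsx)\rd\bsx - \int_{[-a,a]^d} f(\bsx)\rd\bsx = \int_{\R^d\setminus[-a,a]^d} f(\bsx)\rd\bsx$, the triangle inequality bounds the left-hand side by $\int_{\R^d\setminus[-a,a]^d}|f(\bsx)|\rd\bsx$. For \ri the decay condition with $\alpha=1$ reads $|f(\bsx)|\le\|f\|_{1,\beta,p,q}\,\exp(-\beta\|\bsx\|_p^q)$, and since $\|\bsx\|_p\ge\|\bsx\|_\infty$ for every $p\ge1$ we may replace $\|\bsx\|_p$ by $\|\bsx\|_\infty$ in the exponent, enlarging the integrand. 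The complement $\R^d\setminus[-a,a]^d$ is exactly $\{\bsx:\|\bsx\|_\infty>a\}$, so the whole problem collapses to a one-dimensional radial integral. The statement's prefactor involves $\|f\|_{\alpha,\beta,p,q}$ while the hypothesis uses only $\alpha=1$; these are reconciled by $\|f\|_{1,\beta,p,q}\le\|f\|_{\alpha,\beta,p,q}$, since the supremum defining the latter ranges over $\bstau\in\{0,\ldots,\alpha-1\}^d$, which contains $\bstau=\bszero$.

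First I would pass to the radial variable $r=\|\bsx\|_\infty$. Because the volume of $\{\bsx:\|\bsx\|_\infty\le r\}$ equals $(2r)^d$, the layer-cake identity turns any integral of a function of $\|\bsx\|_\infty$ into $\int_0^\infty(\cdot)\,d\,2^d\,r^{d-1}\rd r$, so that
\[
\int_{\{\|\bsx\|_\infty>a\}}\exp(-\beta\|\bsx\|_\infty^q)\rd\bsx
= d\,2^d\int_a^\infty r^{d-1}\exp(-\beta r^q)\rd r.
\]
Substituting $u=\beta r^q$ identifies the remaining integral with an upper incomplete gamma value, namely $\tfrac1q\beta^{-d/q}\,\Gamma(d/q,\beta a^q)$, which already delivers the prefactor $\tfrac{2^d d}{\beta^{d/q}q}$.

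The main obstacle, and the only genuinely nontrivial step, is the uniform estimate on the incomplete gamma function
\[
\Gamma(s,x)\le\lceil s\rceil!\,\frac{e^{-x}}{x}\,\max(1,x^s),
\qquad s=d/q,\quad x=\beta a^q,
\]
which is exactly what converts $\Gamma(d/q,\beta a^q)$ into the claimed factor $\lceil d/q\rceil!\,\exp(-\beta a^q)\,(\beta a^q)^{-1}\max\{1,(\beta a^q)^{d/q}\}$. I would prove it by induction on $n=\lceil s\rceil$ using the recursion $\Gamma(s,x)=x^{s-1}e^{-x}+(s-1)\,\Gamma(s-1,x)$. For the base case $0<s\le1$ the representation $\Gamma(s,x)=x^{s-1}e^{-x}\int_0^\infty(1+t/x)^{s-1}e^{-t}\rd t$ together with $(1+t/x)^{s-1}\le1$ gives $\Gamma(s,x)\le x^{s-1}e^{-x}\le\tfrac{e^{-x}}{x}\max(1,x^s)$. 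For the inductive step one feeds the hypothesis into the recursion and combines $x^{s-1}e^{-x}\le\tfrac{e^{-x}}{x}\max(1,x^s)$, the monotonicity $\max(1,x^{s-1})\le\max(1,x^s)$, and the arithmetic fact $1+(s-1)(n-1)!\le n!$ (valid since $s-1\le n-1$). The factor $\max\{1,\cdot\}$ is precisely the device that handles the two regimes $x\ge1$ and $x<1$ simultaneously, so no separate case split on the size of $\beta a^q$ is needed.

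Part \rii follows the same two opening steps but is considerably easier. The polynomial decay condition with $\alpha=1$ gives $|f(\bsx)|\le\|f\|_{1,\beta,p}\,\|\bsx\|_p^{-\beta}\le\|f\|_{1,\beta,p}\,\|\bsx\|_\infty^{-\beta}$ on the complement, and the same layer-cake identity reduces the tail to $d\,2^d\int_a^\infty r^{d-1-\beta}\rd r$. This integral converges precisely when $\beta>d$ and evaluates to $d\,2^d\,a^{d-\beta}/(\beta-d)$, which yields the stated bound after replacing $\|f\|_{1,\beta,p}$ by the larger $\|f\|_{\alpha,\beta,p}$. No incomplete gamma estimate is required here, so the only subtlety is the decay-rate condition $\beta>d$ guaranteeing integrability of the tail.
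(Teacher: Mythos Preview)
Your proof is correct and follows essentially the same route as the paper: bound by the tail integral, pass from $\|\cdot\|_p$ to $\|\cdot\|_\infty$, reduce to a radial integral via the volume element $2^d\,d\,r^{d-1}\rd r$, identify the result as $\Gamma(d/q,\beta a^q)$, and then control the incomplete gamma function via its recursion. The only cosmetic difference is that you package the gamma estimate as an induction on $\lceil s\rceil$, whereas the paper iterates the recursion $\lfloor s\rfloor$ times to obtain a closed sum and bounds that directly; both arguments rest on the same base inequality $\Gamma(s',z)\le z^{s'-1}e^{-z}$ for $0<s'\le1$ and yield the same constant.
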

\begin{proof}
\ri We have
 \begin{align*}
  \left|
    \int_{\R^d} f(\bsx) \rd \bsx
    -
    \int_{[\bsa,\bsb]} f(\bsx) \rd \bsx
  \right|
  &\le
  \|f\|_{\alpha,\beta,p,q} \, \int_{\R^d \setminus [\bsa,\bsb]} \exp(-\beta \, \|\bsx\|_p^q)  \rd \bsx 
  \\
  &\le
  \|f\|_{\alpha,\beta,p,q} \, \int_{\R^d \setminus [\bsa,\bsb]} \exp(-\beta \, \|\bsx\|_{\infty}^q)  \rd \bsx
  \\
  &\le
  \|f\|_{\alpha,\beta,p,q} \, \int_{\R^d \setminus [-a,a]^d} \exp(-\beta \, \|\bsx\|_{\infty}^q)  \rd \bsx
  ,
\end{align*}
where we have set $a = \min_{1 \le j \le d} \min\{|a_j|, |b_j|\}$.
By setting $r = \|\bsx\|_{\infty}$ and integrating from $a$ to $\infty$, the volume element is $2^d \, d \, r^{d-1} \rd r$, and hence
\begin{align*}
  \int_{\R^d \setminus [-a,a]^d} \exp(-\beta \, \|\bsx\|_{\infty}^q)  \rd \bsx
  &=
  \int_a^\infty \exp(-\beta \, r^q) \, 2^d \, d \, r^{d-1} \rd r
  \\
  &=
  \frac{2^d \, d}{\beta^{d/q} \, q} \,
  \int_{\beta a^q}^\infty \exp(-t) \, t^{(d/q)-1} \rd t
  \\
  &=
  \frac{2^d \, d}{\beta^{d/q} \, q} \, \Gamma(d/q, \beta a^q)
  ,
\end{align*}
where $\Gamma(s, z)$ is the upper incomplete Gamma function with $s = d/q > 0$ and $z = \beta a^q > 0$.
From \cite[\S8.8.2]{NIST:DLMF} we can use recursion to reduce the first argument of the incomplete Gamma function to be in the interval $(0, 1]$, i.e.,
\begin{align*}
  \Gamma(s, z)
  &=
  \Gamma(s - \lfloor s \rfloor, z) \left[ \prod_{i=1}^{\lfloor s \rfloor} (s-i) \right]
  +
  \exp(-z) \, \sum_{j=1}^{\lfloor s \rfloor} \left[ \prod_{i=1}^{j-1} (s-i) \right] z^{s - j}
  .
\end{align*}
Next we use \cite[\S8.10.1]{NIST:DLMF} which states that $\Gamma(s', z) \le z^{s'-1} \, \exp(-z)$ for $0 < s' \le 1$ and $z > 0$ to obtain
\begin{align*}
  \Gamma(d/q, \beta a^q)
  =
  \Gamma(s, z)
  &\le
  \exp(-z)  \left[ \prod_{i=1}^{\lfloor s \rfloor} (s-i) \right] z^{s - \lfloor s \rfloor - 1}
  +
  \exp(-z) \, \sum_{j=1}^{\lfloor s \rfloor} \left[ \prod_{i=1}^{j-1} (s-i) \right] z^{s - j}
  \\
  &=
  \exp(-z) \, \sum_{j=0}^{\lfloor s \rfloor} \left[ \prod_{i=1}^j (s-i) \right] z^{s - j - 1}
  =
  \frac{\exp(-z)}{z} \, \sum_{j=0}^{\lfloor s \rfloor} \left[ \prod_{i=1}^j (s-i) \right] z^{s - j}
  \\
  &\le
  \frac{\exp(-z)}{z} \, \lceil s \rceil! \, \max\{1, z^s\}
  ,
\end{align*}
which follows by considering the cases $s \in \N$, for which $\prod_{i=1}^{\lfloor s \rfloor} (s - i) = \prod_{i=1}^s (s - i) = 0$ and so we have $\lfloor s \rfloor$ terms in the sum and $\lfloor s \rfloor \, \lceil s - 1 \rceil! \le \lceil s \rceil!$, and $s \notin \N$, for which $\lfloor s \rfloor + 1 \le \lceil s \rceil$, separately.
\\
\rii Similar to case \ri we have, for $\beta > d$,
\begin{align*}
  \left|
    \int_{\R^d} f(\bsx) \rd \bsx
    -
    \int_{[\bsa,\bsb]} f(\bsx) \rd \bsx
  \right|
  &\le
  \|f\|_{\alpha,\beta,p}  \, \int_{\R^d \setminus [\bsa,\bsb]} \|\bsx\|_p^{-\beta}   \rd \bsx 
  \\
  &\le
   \|f\|_{\alpha,\beta,p} \, \int_{\R^d \setminus [\bsa,\bsb]} \|\bsx\|_{\infty}^{-\beta}  \rd \bsx 
  \\
  &\le
  \|f\|_{\alpha,\beta,p}  \, \int_a^\infty r^{-\beta} \, 2^d \, d \, r^{d-1} \rd r 
  \\
  &=
  \|f\|_{\alpha,\beta,p}  \, 2^d \, d \, \frac{a^{-\beta+d}}{\beta-d}
  . \qedhere
\end{align*}
\end{proof}

To make the statement for integration over $\R^d$ we need to know how the norm in $\calH(\KSobab_{\alpha,d})$ of our integrand function behaves as the box grows larger.
Since our inner product~\eqref{eq:Sob-ab-innerproduct} is an ``unanchored'' one, meaning that we first integrate out all dimensions in which we take derivatives lower than the smoothness $\alpha$ of the space, we cannot avoid that for certain boxes our norm becomes very small (e.g., some of these integrals might vanish for non-zero functions).
This would be different with a more classical norm in which this inner integration is not present, and the outer $L_2$ integral integrates over all dimensions.
In such a case the norm can only grow bigger when the box increases.
To be able to state results for arbitrary boxes using our unanchored norm leads us to define a norm which captures the worst possible box as follows
\begin{align}\label{eq:Rd-norm}
 \|f\|_{\KSobstar_{\alpha,d}} 
 &:=
 \sup_{[\bsa,\bsb] \subset \R^d} \|f\|_{\KSobab_{\alpha,d}}
 .
\end{align}
We can now make the main statement of the paper.

\begin{theorem}\label{thm:total-error}
Suppose $f \in \calH(\KSobab_{\alpha,d})$ for increasing boxes $[\bsa,\bsb]$, i.e., 
 $\|f\|_{\KSobstar_{\alpha,d}} < \infty$, cf.~\eqref{eq:Rd-norm}. \\
\ri If additionally, $f$ satisfies the exponential decay condition~\eqref{eq:exp-decay},
\[
 \|f\|_{\alpha,\beta,p,q} 
 =
 \sup_{\substack{\bsx\in\R^d \\ \bstau\in\{0,\ldots,\alpha-1\}^d}}
 \left| \exp(\beta \, \|\bsx\|_p^q) \, f^{(\bstau)}(\bsx) \right|
 <
 \infty
 ,
\]
for some $\beta > 0$, $1 \le p \le \infty$ and $1 \le q < \infty$,
then, by choosing the box proportional to $[-a,a]^d$ with
\begin{align*}
  a
  &=
  (\log(n^{\alpha/\beta}))^{1/q}
  ,
\end{align*}
and using a good lattice rule $\Lambda(\bsz,n)$ for the unit cube $[0,1]^d$ as per Lemma~\ref{lem:cuberror-ab}, the total error for $n$ large enough is bounded by
\begin{align*}
  \left|
    \int_{\R^d} f(\bsx) \rd \bsx
    -
    \frac{\prod_{j=1}^d (b_j-a_j)}{n} \sum_{i=1}^n f(\bsp_i^{[\bsa,\bsb]})
  \right|
  \le
  C \, \left(\|f\|_{\KSobstar_{\alpha,d}} + \|f\|_{\alpha,\beta,p,q}\right) \,
  \frac{(\log n)^{\alpha d + (\alpha + 1/2) \, d/q}}{n^\alpha}
  ,
\end{align*}
where the constant $C$ is independent of $n$ and $f$ but depending on $\alpha$, $\beta$, $q$ and $d$.
\\
\rii If additionally, $f$ satisfies the polynomial decay condition~\eqref{eq:poly-decay},
\[
  \|f \|_{\alpha,\beta,p} 
  =
  \sup_{\substack{\bsx\in\R^d \\ \bstau\in\{0,\ldots,\alpha-1\}^d }} \left| \|\bsx\|_p^{\beta} \, f^{(\bstau)}(\bsx) \right|
  <
  \infty,
\]
for some $\beta > d \, \max\{\alpha-1, 1\}$ and $1 \le p \le \infty$,
then, by choosing the box proportional to $[-a,a]^d$ with
\begin{align*}
  a
  &=
  n^{\alpha/(\beta+t d/2)}
\end{align*}
with $t = 3$ for $\alpha \ge 2$ and $t = 1$ for $\alpha = 1$,
and using a good lattice rule $\Lambda(\bsz,n)$ for the unit cube $[0,1]^d$ as per Lemma~\ref{lem:cuberror-ab}, the total error for $n$ large enough is bounded by
\begin{align*}
  \left|
    \int_{\R^d} f(\bsx) \rd \bsx
    -
    \frac{\prod_{j=1}^d (b_j-a_j)}{n} \sum_{i=1}^n f(\bsp_i^{[\bsa,\bsb]})
  \right|
  \le
  C \, \left(\|f\|_{\KSobstar_{\alpha,d}} +\|f\|_{\alpha,\beta,p}\right)
  \, n^{-\alpha + \frac{\alpha (1+2\alpha)}{t + 2\beta / d}}
  \, (\log n)^{\alpha d}
  ,
\end{align*}
where the constant $C$ is independent of $n$ and $f$ but depending on $\alpha$, $\beta$ and $d$.
\end{theorem}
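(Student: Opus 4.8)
The plan is to combine the three already-established ingredients through the error split~\eqref{eq:split-error-analysis}: Lemma~\ref{lem:cuberror-ab} packages the lattice-integration error of the periodic projection $F^{[\bsa,\bsb]}$ together with the projection error $F^{[\bsa,\bsb]}-f$, and to this I would add the truncation error $|\int_{\R^d}f-\int_{[\bsa,\bsb]}f|$ by the triangle inequality. With the box specialised to $[-a,a]^d$ every geometric factor collapses: $b_j-a_j=2a$ and $\min_j\min\{|a_j|,|b_j|\}=a$. The strategy is then to bound each of the three contributions as an explicit product of a power of $2a$ (from the box) and a decay factor, substitute the prescribed $a=a(n)$, and verify that each piece is $O$ of the claimed rate.

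First I would dispatch the two routine ingredients. The lattice-integration error of the periodic part is bounded by Lemma~\ref{lem:cuberror-ab}, after replacing the box-dependent norm $\|f\|_{\KSobab_{\alpha,d}}$ by the finite, box-independent norm $\|f\|_{\KSobstar_{\alpha,d}}$ of~\eqref{eq:Rd-norm}; this yields a factor $(2a)^{(\alpha+1/2)d}$ from $\prod_j\max(1,b_j-a_j)^{\alpha+1/2}$ times the good-lattice worst-case error $C_{\alpha,d}(\log n)^{\alpha d}/n^{\alpha}$. The truncation error is controlled directly by Proposition~\ref{prop:truncation} (exponential or polynomial case). The projection term $|\tfrac{\prod_j(b_j-a_j)}{n}\sum_i(F^{[\bsa,\bsb]}-f)(\bsp_i^{[\bsa,\bsb]})|$ I would first attempt to bound crudely by $(2a)^{d}\sup_{\bsx}|F^{[\bsa,\bsb]}(\bsx)-f(\bsx)|$ and Proposition~\ref{prop:projection-error}, which contributes $(2a)^{\alpha d}$ times the boundary decay $\exp(-\beta a^{q})$ or $a^{-\beta}$.

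In the exponential regime this already closes: with $a=(\log(n^{\alpha/\beta}))^{1/q}$ one has $\exp(-\beta a^{q})=n^{-\alpha}$ exactly, so both the truncation and the crude projection estimate carry the factor $n^{-\alpha}$ multiplied only by powers of $a=O((\log n)^{1/q})$, while the periodic-integration term contributes $(\log n)^{\alpha d+(\alpha+1/2)d/q}/n^{\alpha}$; collecting the largest logarithmic power, which comes from the periodic-integration term, reproduces the stated bound. The polynomial regime with $\alpha=1$ is analogous: there the crude projection estimate and the periodic-integration term balance, and the choice $t=1$, i.e.\ $a=n^{\alpha/(\beta+d/2)}$, makes the decreasing truncation exponent meet the increasing lattice/projection exponent, giving the rate with admissibility threshold $\beta>d$.

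The hard part is the polynomial regime for $\alpha\ge2$. Here the box volume contributes $(2a)^{\alpha d}$ in the projection term, and since the decay is only $a^{-\beta}$ with $\beta$ allowed as small as slightly above $d(\alpha-1)<\alpha d$, the crude pointwise estimate does not even tend to zero: it must be replaced by an estimate that exploits the cancellation in the lattice average. Concretely, I would expand $F^{[\bsa,\bsb]}-f$ via~\eqref{eq:BPM-ab} as a sum of products of scaled Bernoulli polynomials $\prod_{j\in\setu}B^{[a_j,b_j]}_{\tau_j}(x_j)$ weighted by boundary-integrated derivatives $\int_{[\bsa_\setu,\bsb_\setu]}f^{(\bstau)}$, bound the latter by $\|f\|_{\alpha,\beta,p}\,a^{-\beta}$ (they are differences of lower-order derivatives on the faces, so the decay applies), and then use that a good lattice rule integrates a \emph{periodic} Bernoulli product with an extra Euler--Maclaurin gain of order $n^{-\tau_j}$ per coordinate, the lattice average of $\tilB_{\tau_j}$ being supported on the dual lattice and hence governed by the same worst-case-error sum as in Lemma~\ref{lem:cuberror-ab}. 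This turns the projection term into something of order $a^{\alpha d-\beta}n^{-\alpha}$ up to logarithms, which is sub-dominant. The remaining work is bookkeeping: choosing $\gamma=\alpha/(\beta+td/2)$ with $t=3$ (resp.\ $t=1$) so that the truncation and periodic-integration exponents balance, and checking that the admissibility condition $\beta>d\max\{\alpha-1,1\}$ is precisely what pushes the resulting exponent $-\alpha+\alpha(1+2\alpha)/(t+2\beta/d)$ strictly below $0$.
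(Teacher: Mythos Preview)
For part~\ri and for part~\rii with $\alpha=1$ your plan is exactly the paper's: split via~\eqref{eq:split-error-analysis}, bound the three pieces by Proposition~\ref{prop:truncation}, Lemma~\ref{lem:cuberror-ab} (with $\|f\|_{\KSobab_{\alpha,d}}\le\|f\|_{\KSobstar_{\alpha,d}}$), and the crude pointwise projection estimate from Proposition~\ref{prop:projection-error}, then substitute the prescribed $a$ and collect the dominant power.

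Where you diverge is part~\rii with $\alpha\ge2$. The paper does \emph{not} seek any lattice cancellation on the Bernoulli expansion of $F^{[\bsa,\bsb]}-f$: it simply writes the projection contribution as $C_{3}\,\|f\|_{\alpha,\beta,p}\,\max\{1,2a\}^{(\alpha-1)d}\,a^{-\beta}$, i.e.\ the pointwise bound of Proposition~\ref{prop:projection-error} \emph{without} the quadrature-weight prefactor $(2a)^{d}$, and then equalises that exponent with the cubature exponent to obtain $t=3$ and the threshold $\beta>(\alpha-1)d$. Your accounting is the more careful one --- the third term in~\eqref{eq:split-error-analysis} really does carry the volume factor, so the honest crude estimate is $(2a)^{\alpha d}a^{-\beta}$, which would force $\beta>\alpha d$ --- and you have in effect put your finger on a step the paper's proof glosses over. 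Your proposed repair, pushing the lattice average through~\eqref{eq:BPM-ab} and extracting a further factor $n^{-\alpha}$ from the Bernoulli products via the dual lattice, is a genuinely different idea from anything in the paper and, as sketched, does not yet close either: the terms with $\setu\subsetneq\{1,\ldots,d\}$ multiply $\prod_{j\in\setu}B^{[a_j,b_j]}_{\tau_j}(x_j)$ by the non-periodic, non-constant boundary traces $f^{(\bstau_\setu-\bsone_\setu,\bszero_{-\setu})}(\bsa_\setw,\bsb_{\setu\setminus\setw},\bsx_{-\setu})$ in the remaining variables, so the dual-lattice argument you invoke does not apply to them directly.
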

\begin{proof}
The total error of our approximation can be considered the sum of three errors, cf.~\eqref{eq:split-error-analysis}, which we can bound by using Proposition~\ref{prop:truncation} for the truncation error, using Proposition~\ref{prop:lattice-rule-error-ab} and Lemma~\ref{lem:cuberror-ab} for the cubature error of the periodic function $F^{[\bsa,\bsb]}$, and finally using Proposition~\ref{prop:projection-error} for the projection error between $F^{[\bsa,\bsb]}$ and~$f$.

\ri For the exponential decay and truncating to a box $[\bsa,\bsb] = [-a,a]^d$ with $a$ chosen such that $\beta a^q = \log(n^\alpha)$, for $n \ge 3$ and $\log(n^\alpha) \ge \beta / 2^q$ such that $a \ge 1/2$, we obtain
\begin{align*}
  &\left| \int_{\R^d} f(\bsx) \rd \bsx  - \frac{\prod_{j=1}^d (b_j-a_j)}{n}\sum_{i=1}^n f(\bsp_i^{[\bsa,\bsb]}) \right|
  \\
  &\qquad\le
  C_1 \, \|f\|_{\alpha,\beta,p,q} \, \max\{1, (\beta \, a^q)^{d/q}\} \, \frac{\exp(-\beta \, a^q)}{\beta \, a^q}
  \\
  &\qquad\quad+
  C_2 \, \|f\|_{\KSobab_{\alpha,d}}
    \,
    \max\{1, 2a\}^{(\alpha+1/2) \, d} \,
    \frac{(\log n)^{\alpha d}}{n^\alpha}
  \\
  &\qquad\quad+
  C_3
    \,
    \|f\|_{\alpha,\beta,p,q}
     \, \max\{1, 2a\}^{(\alpha-1) \, d}
    \,
    \exp(-\beta a^q) \vphantom{\frac{1}{\beta n^\alpha}}
  \\
  &\qquad\le
  C \, \left( \|f\|_{\alpha,\beta,p,q} + \|f\|_{\KSobstar_{\alpha,d}} \right) \,
  (\log n)^{\alpha d + (\alpha + 1/2) \, d / q} \, n^{-\alpha}
  .
\end{align*}

\rii For the polynomial decay and truncating to a box $[\bsa,\bsb] = [-a,a]^d$ with $a = n^{\alpha/(\beta+3d/2)}$, and assuming $n$ is large enough such that $a \ge 1/2$, we obtain by equalizing the exponent of $n$ for the cubature error and the projection error with $\alpha \ge 2$,
\begin{align*}
  &\left| \int_{\R^d} f(\bsx) \rd \bsx  - \frac{\prod_{j=1}^d (b_j-a_j)}{n}\sum_{i=1}^n f(\bsp_i^{[\bsa,\bsb]}) \right|
  \\
  &\le
  C_1 \, \|f\|_{\alpha,\beta,p} \, a^{-\beta+d}
  \\
  &\quad+
  C_2 \, 
  \|f\|_{\KSobab_{\alpha,d}}
    \,
    \max\{1, 2a\}^{(\alpha+1/2) \, d} \,
    \frac{(\log n)^{\alpha d}}{n^\alpha}
  \\
  &\quad+
  C_3 \,
  \|f\|_{\alpha,\beta,p}
    \,
  \max\{1, 2a\}^{(\alpha-1) \, d}
    \,
    a^{-\beta} \vphantom{\frac{1}{\beta n^\alpha}}
  \\
  &\le
  C \, \left( \|f\|_{\alpha,\beta,p} + \|f\|_{\KSobstar_{\alpha,d}} \right) \,
  \max\{
    n^{\frac{-\alpha}{1+3d/(2\beta)} + \frac{\alpha}{3/2+\beta/d}} ,
    n^{-\alpha + \frac{\alpha (\alpha+1/2)}{3/2+\beta/d}} ,
    n^{\frac{-\alpha}{1+3d/(2\beta)} + \frac{\alpha (\alpha-1)}{3/2+\beta/d}}
  \}
  \, (\log n)^{\alpha d}
  \\
  &\le
  C \, \left( \|f\|_{\alpha,\beta,p} + \|f\|_{\KSobstar_{\alpha,d}} \right) \,
    n^{-\alpha + \frac{\alpha (1+2\alpha)}{3 + 2\beta / d}}
  \, (\log n)^{\alpha d}
  .
\end{align*}
For this to converge we need $\beta > (\alpha - 1) \, d$.
When $\alpha = 1$ we instead equalize the truncation error with the cubature error and by taking $a = n^{1/(\beta+d/2)}$ we obtain in the same manner
\begin{multline*}
  \left| \int_{\R^d} f(\bsx) \rd \bsx  - \frac{\prod_{j=1}^d (b_j-a_j)}{n}\sum_{i=1}^n f(\bsp_i^{[\bsa,\bsb]}) \right|
  \\\le
  C \, \left( \|f\|_{1,\beta,p} + \|f\|_{\KSobstar_{1,d}} \right) \,
  n^{-1 + \frac{3}{1+2\beta/d}} \,
  \, (\log n)^{\alpha d}
  ,
\end{multline*}
with the condition $\beta > d$.
\end{proof}

We want to remark that when the considered integrand has slow decay, such as calculating expectations under heavy-tailed distributions (e.g., the Student-t distribution and the alpha-stable distribution with stable parameter smaller than 2), the method here proposed still works. Those problems especially occur in finance and extreme value theory. 
For such distributions the classical Gaussian-type quadrature cannot be constructed for arbitrary number of nodes since such distributions do not have higher-order moments.

\section{Numerical experiments}\label{sec:numerics}

We here demonstrate our method on two test cases, but first we want to draw the reader's attention to the fact that our algorithm is indeed very simple.
We do not require any inverse sampling, nor any complicated method to turn our point sets into higher-order point sets, or any periodizing transform or complicated adjustments involving calculating derivatives.
We simply need the user to assess the decay of the integrand function, which is mostly just dictated by the decay of the actual distribution, as is the case in our examples here.
E.g., any polynomially increasing function integrated against the standard normal distribution will fit the exponential decay with $q = 2$ and $\beta = 1/2$.
Similarly, any polynomially increasing function integrated against the logistic density with scale parameter $s > 0$ will fit the exponential decay with $q = 1$ and $\beta = 1/s$.
These parameters will let us calculate the box in terms of $n$ and then we do a simple scaling of the lattice rule to this box.
The pseudo code is given in Algorithm~\ref{alg:scaled-lattice-rule}.
Note that if information on the anisotropy of the integrand function is known (e.g., because of different scaling or variance parameters), or if the function is off-centre, then this information could be used to improve the selection of the box such that convergence kicks in sooner.
For ease of exposition we do not discuss that here.

Although in the analysis we have split the error into three contributions, cf.~\eqref{eq:split-error-analysis}, for our numerical experiments it makes more sense to look at~\eqref{eq:split-error-numerics} which splits the error in the two contributions which are apparent from the algorithm: the error of truncating to the box and the error of approximating the integral on the box by the lattice rule.
In the analysis we have split the last part into two parts by considering the projection $F^{[\bsa,\bsb]}$, but this was just used as a theoretical tool which we do not have available in the algorithm (or, in fact we avoid it because of its excessive cost).

Since our error can thus be considered as the sum of the two contributions in~\eqref{eq:split-error-numerics} we need to make sure that we have an idea of both of these terms, since one could dominate the other, and then we do not know if indeed both of the terms converge as we have claimed in the theoretical analysis, cf.~Proposition~\ref{prop:truncation} for the truncation error and Lemma~\ref{lem:cuberror-ab} for the cubature error on the box.
To facilitate this we will use products of one-dimensional functions of which we can obtain the true value up to machine precision for the one-dimensional integrals over $\R$ and over a box $[a,b]$.
We choose one-dimensional functions of the form $(1 + g_j(x_j)) \, \rho_j(x_j)$ such that the product $\prod_{j=1}^d ((1 + g_j(x_j)) \, \rho_j(x_j)) = (\sum_{\mathfrak{u} \subseteq \{1,\ldots,d\}} \prod_{j\in\mathfrak{u}} g_j(x_j)) \, \prod_{j=1}^d \rho_j(x_j)$ contains interactions of variables in all possible subsets of $\{1,\ldots,d\}$ against a $d$-dimensional product density $\prod_{j=1}^d \rho_j(x_j)$.
We use an automatic quadrature routine to calculate the one-dimensional reference values if needed to calculate the error contributions.
We consider integrands with finite smoothness and compare our method to other existing methods.

\begin{algorithm}[t]
\caption{Scaled lattice rules for integration on $\R^d$ by truncating to a box $[\bsa,\bsb]$}\label{alg:scaled-lattice-rule}
\begin{algorithmic}
\State{\textbf{Input:}}
\State{- $f$} \Comment{$d$-dimensional integrand function}
\State{- $\alpha$} \Comment{smoothness parameter of Sobolev space such that $\|f\|_{\KSobstar_{\alpha,d}} < \infty$}
\State{- decay condition: $\beta$, $p$, $q$} \Comment{parameters for decay condition \ri or \rii}
\State{- $\bsz, n$} \Comment{generating vector of good $n$-point $d$-dimensional lattice rule}
\Statex{}
\State{$[\bsa, \bsb] := [-a,a]^d$} \Comment{calculate using Theorem~\ref{thm:total-error} for decay \ri or \riip, given $n$, $\alpha$, $\beta$, $q$, $d$}
\State{$Q := 0$}
\For{\texttt{$i=1,2,...,n$}} \Comment{possibly in parallel}
        \State{$Q := Q + \frac{\prod_{j=1}^d (b_j-a_j)}{n} f(\bsp^{[\bsa,\bsb]}_i)$}
        \Comment{with $p^{[\bsa,\bsb]}_{i,j} = (b_j-a_j) \, \left((i \bsz / n) \bmod 1\right) + a_j$}
\EndFor
\Statex{}
\State \textbf{Output:} $Q$
\end{algorithmic}
\end{algorithm}

\subsection{Integration against the logistic distribution}

We consider the following function, being finitely smooth with parameter $\sigma \in \R \setminus \Z$ and $\sigma > 0$:
\begin{align*}
  f_1(\bsx)
  &:=
  \prod_{j=1}^d \left[
    \left(1 +  4x_j + 10 \cos^2(x_j) + \operatorname{sign}(x_j-\mu_j) \, \frac{|x_j-\mu_j|^\sigma}{\Gamma(\sigma+1)}\right)
    \, \frac{\exp((x_j-\mu_j)/s_j)}{(1+\exp((x_j-\mu_j)/s_j))^2 \, s_j} \right]
  .
\end{align*}
We note that $\operatorname{sign}(x) \, |x|^\sigma \in \KSobabone_{\alpha,1}$ for all $\alpha < \sigma + 1/2$, and thus $f_1 \in \KSobstar_{\alpha,d}$ for $\alpha < \sigma + 1/2$, which follows from the Leibniz rule for the product of this function with the logistic probability density function.
Since we are integrating a polynomially increasing function against the logistic distribution this integrand satisfies the exponential decay condition~\eqref{eq:exp-decay} with parameters $q = 1$ and $\beta = 1/s$ where $s = \max_{1 \le j \le d} s_j$.
The value of the integral is given by
\begin{align*}
  \int_{\R^d} f_1(\bsx) \rd\bsx
  &=
  \prod_{j=1}^d \left(1 + 4 \mu_j + \frac{10}{2} + 10 \pi \, s_j \frac{\cos(2 \mu_j)}{\sinh(2 \pi s_j)} \right)
  .
\end{align*}
For $d = 2$ we consider $\mu = (3, -3)$ and $s = (2, 2)$.
For $d = 3$ we consider $\mu = (1, -1, 0)$ and $s = (1, 1, 1)$.
We choose three different values $\sigma = 0.6$, $1.6$ and $2.6$ which respectively translate to smoothness $\alpha = 1$, $2$ and~$3$.

We apply Algorithm~\ref{alg:scaled-lattice-rule} without recentering the integrand and we compare to a very similar algorithm which uses scaled higher-order digital nets from \cite{DILP2018} which was developed for integration against the normal distribution.
The algorithm in \cite{DILP2018} takes the same form as our algorithm here, with the difference that the analysis does not need to take care of the projection error since higher-order digital nets are directly applicable to non-periodic spaces.
Although the analysis in \cite{DILP2018} is limited for integration against the normal distribution, the basic reasoning followed in Theorem~\ref{thm:total-error}, without the projection error, also holds for other higher-order cubature rules on the cube, and hence we can apply scaled higher-order digital nets for the integrand here as well.
We use interlaced Sobol' points from \cite{KN2016} with the interlacing factor being the smoothness $\alpha$ of the considered integrand, i.e., $\alpha= \lfloor \sigma + 1/2 \rfloor$. 
For the lattice rule the generating vector is always fixed to be $\bsz = (1, 4959637, 5860107)$ for any number of points $n$ being a power of two.
When the dimension $d$ is smaller than three, the generating vector simply becomes the first $d$ components.
This generating vector was constructed for a lattice sequence in base~$2$ by the component-by-component construction algorithm for the first order unweighted Korobov space on the unit cube with the numbers of points from $2^8$ to $2^{24}$, see~\cite{CKN2006}.
This could be considered an off-the-shelf lattice sequence as we did not construct separate lattice sequences for different orders of smoothness.
For both the interlaced Sobol' sequence and the lattice sequence we need to restrict the number of points to be a power of~$2$ to see the higher-order convergence, see~\cite{HKKN2012}.

The result is exhibited in Figures~\ref{fig:numeric-f1d2} and~\ref{fig:numeric-f1d3}.
We expect both methods to achieve order $\alpha$ convergence, but only the scaled lattice rule does so convincingly.
The scaled interlaced Sobol' sequence seems to not yet be in its asymptotical regime and is only slightly better than order~$1$ for $\alpha=2$ and~$3$, leaving a huge gap with the scaled lattice rule.
We notice from the figures that when the dimension goes up from $2$ to $3$ that the asymptotic regime also kicks in later for the scaled lattice rule, but this is to be expected since we did not try to accommodate weighted function spaces.
For $d=3$ and higher smoothness it is also apparent that the error is dominated by the error of the cubature rule on the box.
To balance this error one would need to have an automatic method which can assess both errors separately.

\begin{figure}
\centering
\includegraphics[scale=1]{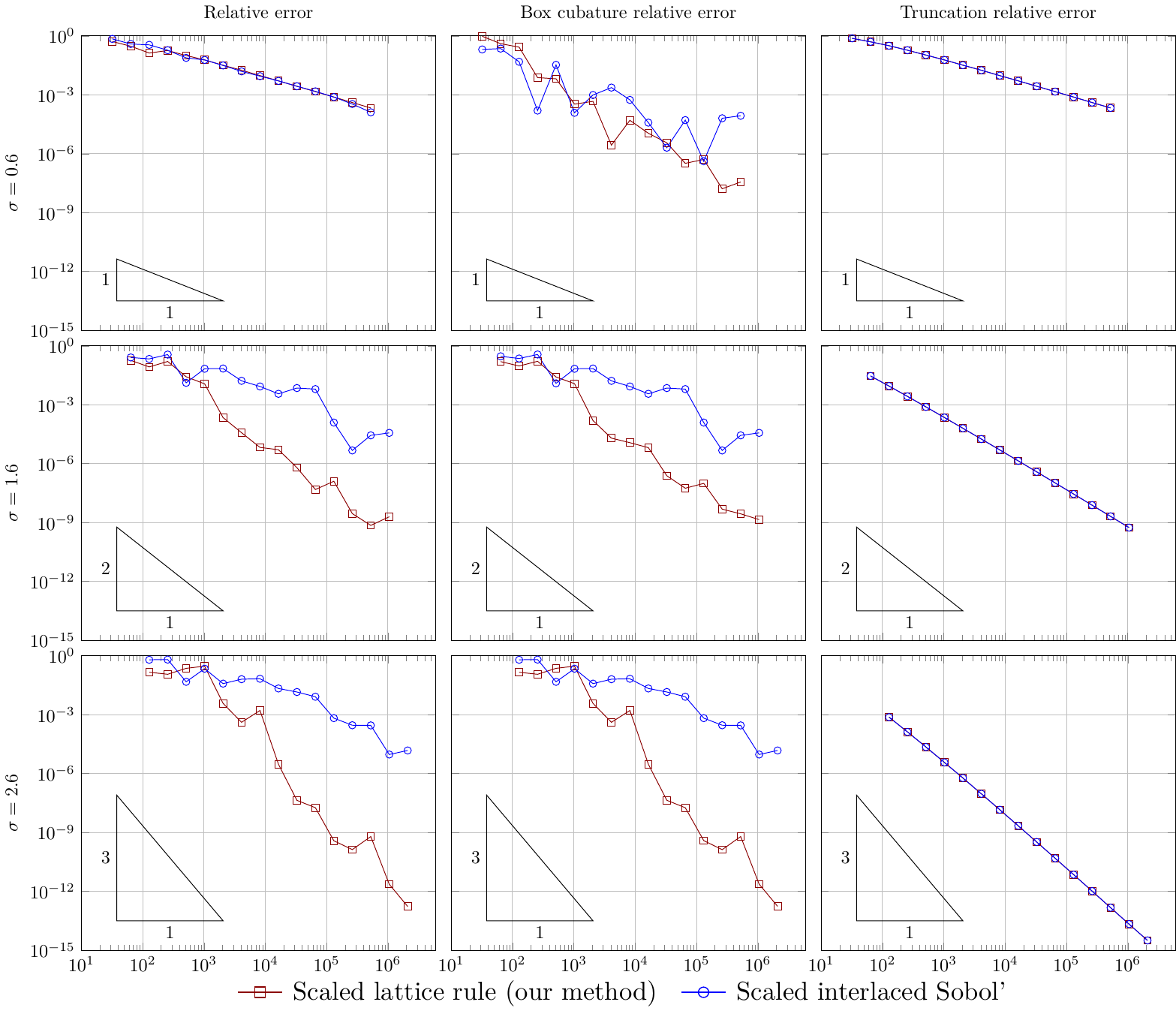}
 \caption{Relative errors for $f_1$ with $d = 2$.}
 \label{fig:numeric-f1d2}
\end{figure}

\begin{figure}
\centering
\includegraphics[scale=1]{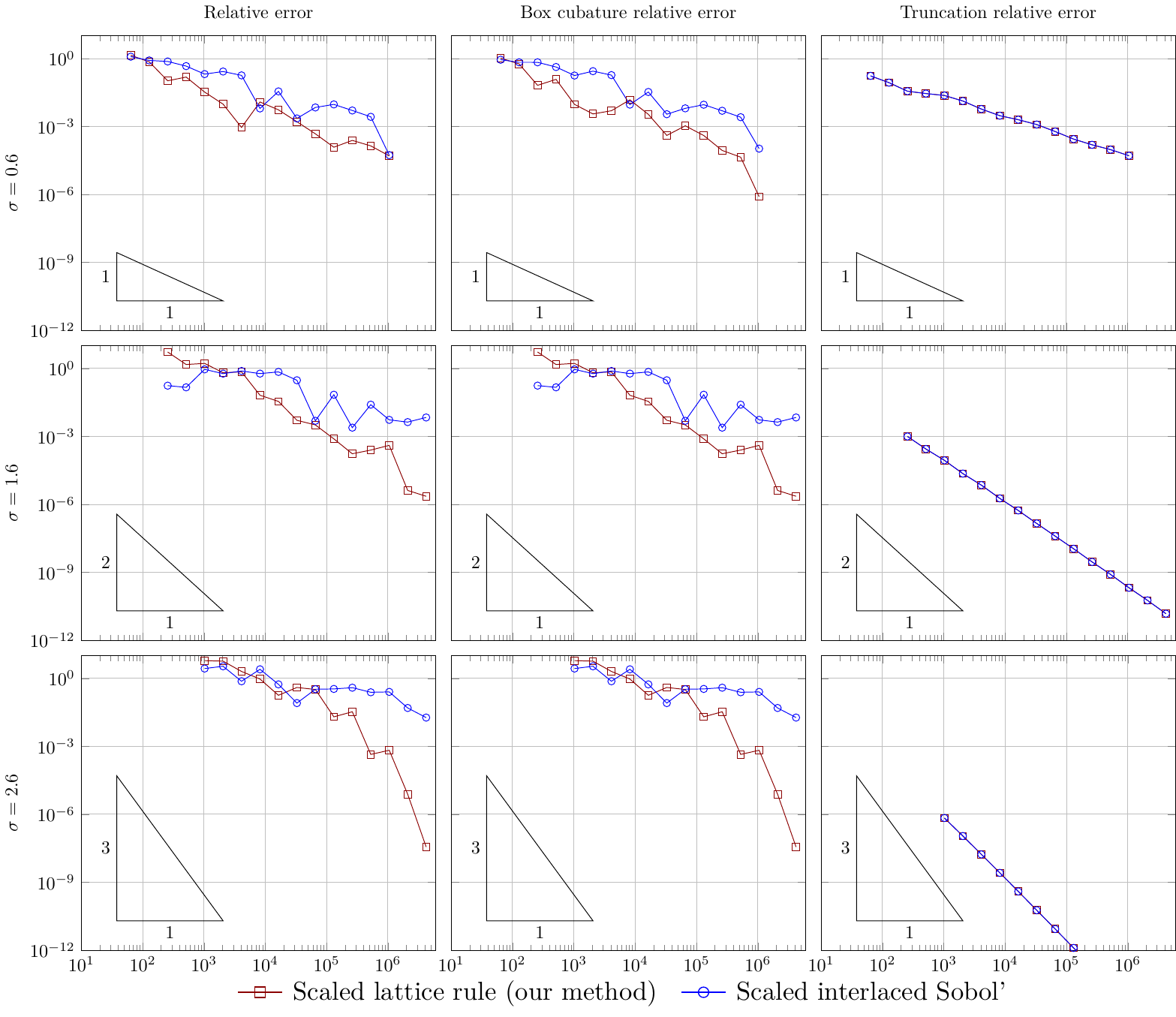}
 \caption{Relative errors for $f_1$ with $d = 3$.}
 \label{fig:numeric-f1d3}
\end{figure}

\subsection{Integration against the normal distribution}

Next, we consider the following integrand with $\sigma \ge 0$:
\begin{align*}
  f_2(\bsx)
  &:=
  \frac{\exp(-\|\bsx\|^2_2/2)}{(2\pi)^{d/2}} \, \prod_{j=1}^d  \left( 1 + |x_j|^\sigma \right)
  ,
\end{align*}
which is basically a product of moments against the normal distribution and has exact value:
\begin{align*}
  \int_{\R^d} f_2(\bsx) \rd \bsx
  =
  \left( 1 + \frac{(\sqrt{2})^\sigma \, \Gamma((\sigma+1)/2)}{\sqrt{\pi}} \right)^d
  .
\end{align*}
For $\sigma \in \R \setminus \Z$ and $\sigma > 0$ we again have that $|x|^\sigma$ has finite smoothness and $|x|^\sigma \in \KSobabone_{\alpha,1}$ for all $\alpha < \sigma + 1/2$, and thus $f_2 \in \KSobstar_{\alpha,d}$ for $\alpha < \sigma + 1/2$, which follows from the Leibniz rule for the product of this function with the Gaussian probability density function.
This integrand satisfies the exponential decay condition~\eqref{eq:exp-decay} with parameters $q = 2$ and $\beta = 1/2$.
We choose three different values $\sigma = 0.6$, $1.6$ and $2.6$ which respectively translate to smoothness $\alpha = 1$, $2$ and~$3$ in $d=2$ and $3$ dimensions.

We compare our method with a tensor product Gauss--Hermite quadrature with the number of points being $2^m+1$ in each dimension with $m \in \N$, a sparse grid based on the same one-dimensional $2^m+1$ point Gauss--Hermite rules and the scaled interlaced Sobol' points as described in the previous section.
For the sparse grid we use the Sparse Grids Matlab Kit \cite{BNTT2011} and use the default Smolyak construction.
We remark that neither the tensor product rule, nor the sparse grid explicitly truncate the domain.

The result is exhibited in Figures~\ref{fig:numeric-f2d2} and~\ref{fig:numeric-f2d3}.
Again, we expect that our method and the scaled interlaced Sobol' points achieve  order $\alpha$ convergence.
Contrary to the result for $f_1$, here the scaled interlaced Sobol' points do give the expected higher order convergence, but they still underperform and leave a gap with the scaled lattice rules for the higher order, which seems to increase as the dimension or the smoothness goes up.
For the tensor product Gauss--Hermite rule it was already observed in \cite{DILP2018} that the convergence was not optimal, and also here we observe this behavior.
The sparse grid performs better than the tensor product rule, as expected, and seems to catch up with the performance of the scaled interlaced Sobol' points.
Nevertheless, in all cases, the scaled lattice rule outperforms all other methods and achieves the claimed convergence rates in accordance with the theory.

\begin{figure}
\centering
\includegraphics[scale=1]{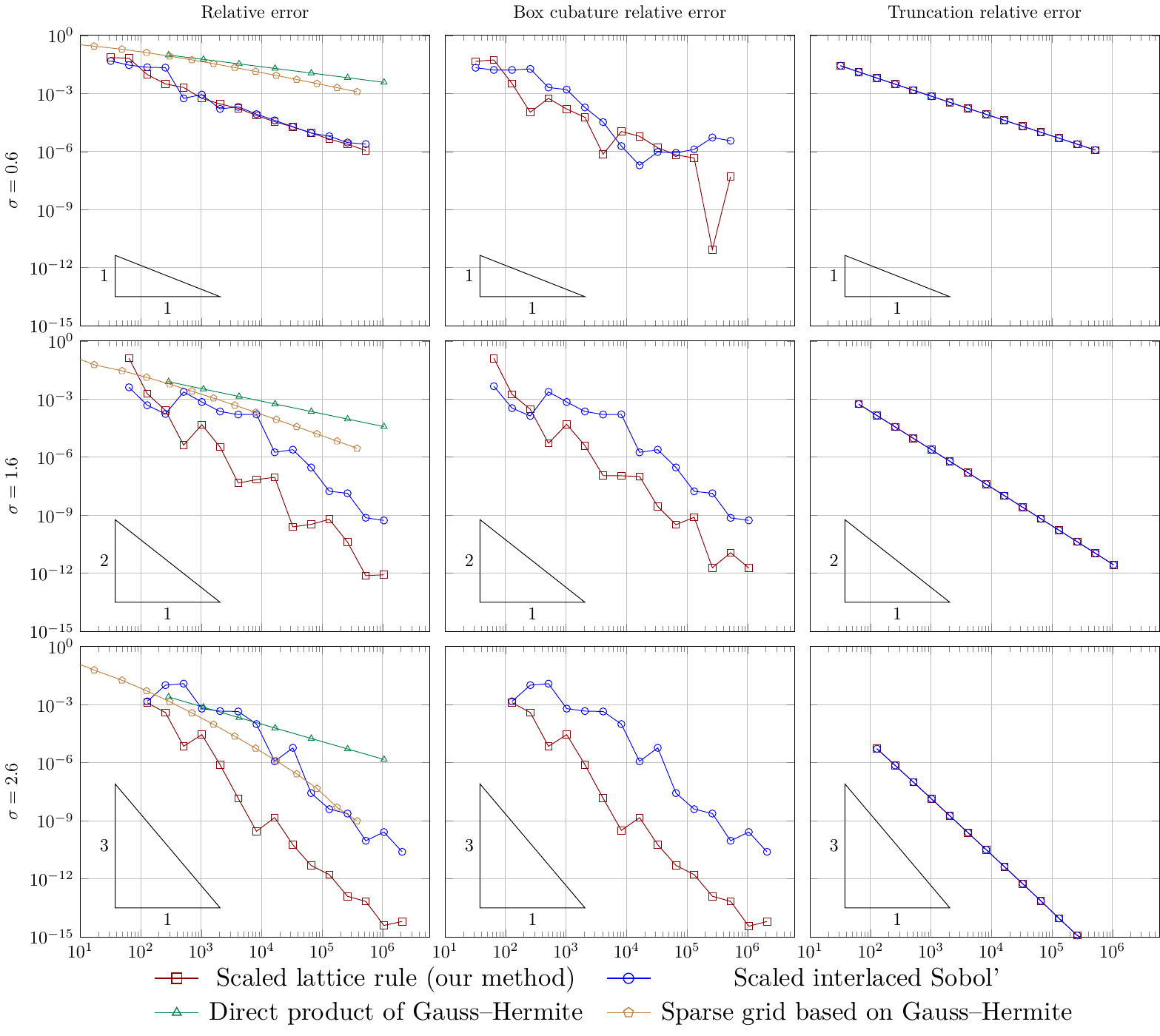}
 \caption{Relative errors for $f_2$ with $d = 2$.}
 \label{fig:numeric-f2d2}
\end{figure}

\begin{figure}
\centering
\includegraphics[scale=1]{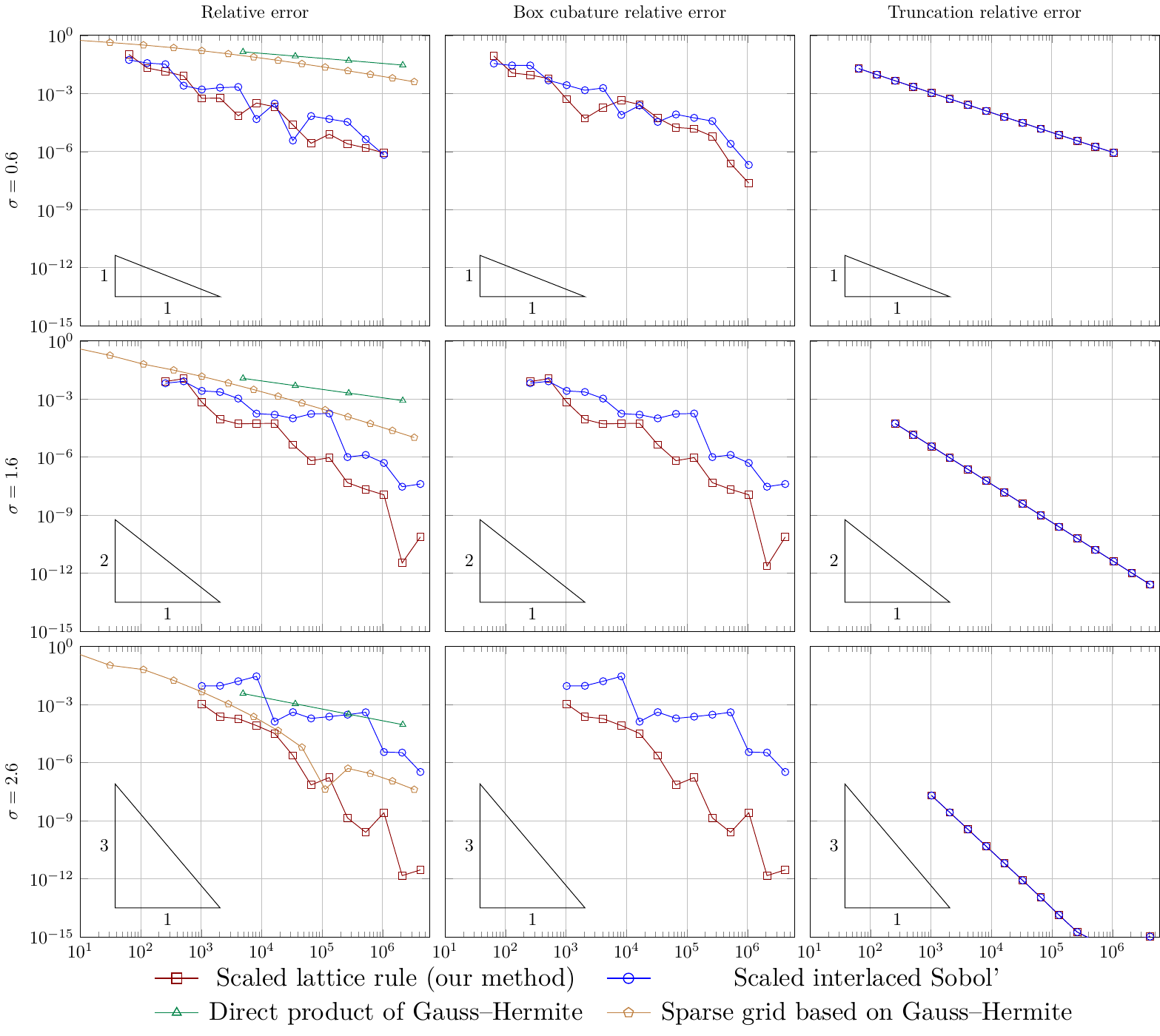}
 \caption{Relative errors for $f_2$ with $d = 3$.}
 \label{fig:numeric-f2d3}
\end{figure}

\section{Conclusion}\label{sec:conclusion}

We considered a very simple algorithm based on scaled lattice rules for integration over $\R^d$.
We derived explicit conditions where scaled lattice rules can obtain higher-order convergence.
In our new strategy of the error analysis, the total error is divided into three parts: the truncation error, the projection error of the integrand onto the periodic space inside the box and the cubature error for the projected function in the truncated box.
By making use of this projection, we can measure the non-periodicity of the integrand, and by imposing a decay condition towards zero, our integrand becomes more and more periodic inside the increasing boxes.
Since lattice rules are known to have higher-order convergence for periodic functions, we can achieve higher-order convergence using properly scaled lattice rules for integration over $\R^d$.

We numerically verified our theoretical statements and showed that our method outperforms scaled interlaced Sobol' points for both an example involving the logistic probability density and the normal density, as well as tensor product Gauss--Hermite rules and the sparse grid method based on such Gauss--Hermite rules for the example involving the normal density.
Our scaled lattice rule, which used just a single ``off-the-shelf'' generating vector of a base~$2$ lattice sequence constructed for order~$1$, achieved the convergence rates of order $2$ and $3$ by proper scaling as shown in the theoretical statements.

\section*{Acknowledgement}
The authors acknowledge the support of FWO grant G091920N and NTNU project grant 81617985.

\bibliographystyle{plain}
\bibliography{scaled-lattice-rules-on-Rd}

\begin{thebibliography}{10}

\bibitem{BNTT2011}
Joakim B\"ack, Fabio Nobile, Lorenzo Tamellini, and Raul Tempone.
\newblock Stochastic spectral {G}alerkin and collocation methods for {PDE}s
  with random coefficients: A numerical comparison.
\newblock In J.~S. Hesthaven and E.~M. Ronquist, editors, {\em Spectral and
  High Order Methods for Partial Differential Equations}, volume~76 of {\em
  Lecture Notes in Computational Science and Engineering}, pages 43--62.
  Springer, 2011.
\newblock Code available at
  \url{https://sites.google.com/view/sparse-grids-kit}.

\bibitem{BH1992}
Marc Beckers and Ann Haegemans.
\newblock Transformation of integrands for lattice rules.
\newblock In {\em Numerical Integration ({B}ergen, 1991)}, volume 357 of {\em
  NATO Adv. Sci. Inst. Ser. C Math. Phys. Sci.}, pages 329--340. Kluwer Acad.
  Publ., Dordrecht, 1992.

\bibitem{CKN2006}
Ronald Cools, Frances~Y. Kuo, and Dirk Nuyens.
\newblock Constructing embedded lattice rules for multivariable integration.
\newblock {\em SIAM J. Sci. Comput.}, 28(6):2162--2188, 2006.

\bibitem{CKNS2016}
Ronald Cools, Frances~Y. Kuo, Dirk Nuyens, and Gowri Suryanarayana.
\newblock Tent-transformed lattice rules for integration and approximation of
  multivariate non-periodic functions.
\newblock {\em Journal of Complexity}, 36:166--181, 2016.

\bibitem{DILP2018}
Josef Dick, Christian Irrgeher, Gunther Leobacher, and Friedrich
  Pillichshammer.
\newblock On the optimal order of integration in {H}ermite spaces with finite
  smoothness.
\newblock {\em SIAM Journal on Numerical Analysis}, 56(2):684--707, 2018.

\bibitem{DKLNS2014}
Josef Dick, Frances~Y. Kuo, Quoc~T. Le~Gia, Dirk Nuyens, and Christoph Schwab.
\newblock Higher order {QMC} {P}etrov-{G}alerkin discretization for affine
  parametric operator equations with random field inputs.
\newblock {\em SIAM Journal on Numerical Analysis}, 52(6):2676--2702, 2014.

\bibitem{DKS2013}
Josef Dick, Frances~Y. Kuo, and Ian~H. Sloan.
\newblock High-dimensional integration: The quasi-{M}onte {C}arlo way.
\newblock {\em Acta Numerica}, 22:133--288, 2013.

\bibitem{DNP2014}
Josef Dick, Dirk Nuyens, and Friedrich Pillichshammer.
\newblock Lattice rules for nonperiodic smooth integrands.
\newblock {\em Numerische Mathematik}, 126(2):259--291, 2014.

\bibitem{DSWW2004}
Josef Dick, Ian~H. Sloan, Xiaoqun Wang, and Henryk Wo\'{z}niakowski.
\newblock Liberating the weights.
\newblock {\em Journal of Complexity}, 20(5):593--623, 2004.

\bibitem{NIST:DLMF}
{\it NIST Digital Library of Mathematical Functions}.
\newblock http://dlmf.nist.gov/, Release 1.0.24 of 2019-09-15.
\newblock F.~W.~J. Olver, A.~B. {Olde Daalhuis}, D.~W. Lozier, B.~I. Schneider,
  R.~F. Boisvert, C.~W. Clark, B.~R. Miller, B.~V. Saunders, H.~S. Cohl, and
  M.~A. McClain, eds.

\bibitem{GSY2019}
Takashi Goda, Kosuke Suzuki, and Takehito Yoshiki.
\newblock Lattice rules in non-periodic subspaces of {S}obolev spaces.
\newblock {\em Numerische Mathematik}, 141(2):399--427, 2019.

\bibitem{H2002}
Fred~J. Hickernell.
\newblock Obtaining {$O(N^{-2+\epsilon})$} convergence for lattice quadrature
  rules.
\newblock In {\em Monte {C}arlo and Quasi-{M}onte {C}arlo Methods, 2000 ({H}ong
  {K}ong)}, pages 274--289. Springer, Berlin, 2002.

\bibitem{HKKN2012}
Fred~J. Hickernell, Peter Kritzer, Frances~Y. Kuo, and Dirk Nuyens.
\newblock Weighted compound integration rules with higher order convergence for
  all {$N$}.
\newblock {\em Numer. Algorithms}, 59(2):161--183, 2012.

\bibitem{IL2015}
Christian Irrgeher and Gunther Leobacher.
\newblock High-dimensional integration on {$\mathbb{R}^d$}, weighted {H}ermite
  spaces, and orthogonal transforms.
\newblock {\em Journal of Complexity}, 31(2):174--205, 2015.

\bibitem{K1963}
Nikola\u\i~M. Korobov.
\newblock {\em Teoretiko-chislovye metody v priblizhennom analize}.
\newblock Gosudarstv. Izdat. Fiz.-Mat. Lit., Moscow, 1963.

\bibitem{KN2016}
Frances~Y. Kuo and Dirk Nuyens.
\newblock Application of quasi-{M}onte {C}arlo methods to elliptic {PDE}s with
  random diffusion coefficients: a survey of analysis and implementation.
\newblock {\em Found. Comput. Math.}, 16(6):1631--1696, 2016.
\newblock Code available at
  \url{https://people.cs.kuleuven.be/~dirk.nuyens/qmc4pde//#ISobol}.

\bibitem{KSWW2010}
Frances~Y. Kuo, Ian~H. Sloan, Grzegorz~W. Wasilkowski, and Benjamin~J.
  Waterhouse.
\newblock Randomly shifted lattice rules with the optimal rate of convergence
  for unbounded integrands.
\newblock {\em Journal of Complexity}, 26(2):135--160, 2010.

\bibitem{KSW2007}
Frances~Y. Kuo, Ian~H. Sloan, and Henryk Wo\'{z}niakowski.
\newblock Periodization strategy may fail in high dimensions.
\newblock {\em Numerical Algorithms}, 46(4):369--391, 2007.

\bibitem{L1940}
Derrick~H. Lehmer.
\newblock On the maxima and minima of {B}ernoulli polynomials.
\newblock {\em American Mathematical Monthly}, 47:533--538, 1940.

\bibitem{NN2017}
Dong T.~P. Nguyen and Dirk Nuyens.
\newblock Multivariate integration over {$\mathbb{R}^s$} with exponential rate
  of convergence.
\newblock {\em Journal of Computational and Applied Mathematics}, 315:327--342,
  2017.

\bibitem{NN2020}
Dong T.~P. Nguyen and Dirk Nuyens.
\newblock {MDFEM}: Multivariate decomposition finite element method for
  elliptic {PDE}s with lognormal diffusion coefficients using higher-order
  {QMC} and {FEM}.
\newblock {\em ESAIM: Mathematical Modelling and Numerical Analysis (ESAIM:
  M2AN)}, 55(4):1461--1505, 2021.

\bibitem{NK2014}
James~A. Nichols and Frances~Y. Kuo.
\newblock Fast {CBC} construction of randomly shifted lattice rules achieving
  {$\mathcal{O}(n^{-1+\delta})$} convergence for unbounded integrands over
  {$\mathbb{R}^s$} in weighted spaces with {POD} weights.
\newblock {\em Journal of Complexity}, 30(4):444--468, 2014.

\bibitem{N1992}
Harald Niederreiter.
\newblock {\em Random Number Generation and Quasi-{M}onte {C}arlo Methods},
  volume~63 of {\em CBMS-NSF Regional Conference Series in Applied
  Mathematics}.
\newblock Society for Industrial and Applied Mathematics (SIAM), Philadelphia,
  PA, 1992.

\bibitem{NW2008}
Erich Novak and Henryk Wo\'{z}niakowski.
\newblock {\em Tractability of Multivariate Problems. {V}ol. I: {L}inear
  Information}, volume~6 of {\em EMS Tracts in Mathematics}.
\newblock European Mathematical Society (EMS), Z\"{u}rich, 2008.

\bibitem{R1976}
Walter Rudin.
\newblock {\em Principles of Mathematical Analysis}.
\newblock International Series in Pure and Applied Mathematics. McGraw-Hill
  Book Co., New York-Auckland-D\"{u}sseldorf, third edition, 1976.

\bibitem{S1993}
Avram Sidi.
\newblock A new variable transformation for numerical integration.
\newblock In {\em Numerical Integration, {IV} ({O}berwolfach, 1992)}, volume
  112 of {\em Internat. Ser. Numer. Math.}, pages 359--373. Birkh\"{a}user,
  Basel, 1993.

\bibitem{SJ1994}
Ian~H. Sloan and Stephen Joe.
\newblock {\em Lattice Methods for Multiple Integration}.
\newblock Oxford Science Publications. The Clarendon Press, Oxford University
  Press, New York, 1994.

\bibitem{SW1998}
Ian~H. Sloan and Henryk Wo\'{z}niakowski.
\newblock When are quasi-{M}onte {C}arlo algorithms efficient for
  high-dimensional integrals?
\newblock {\em Journal of Complexity}, 14(1):1--33, 1998.

\bibitem{SW2001}
Ian~H. Sloan and Henryk Wo\'{z}niakowski.
\newblock Tractability of multivariate integration for weighted {K}orobov
  classes.
\newblock {\em Journal of Complexity}, 17(4):697--721, 2001.

\bibitem{Z1972}
Stanis{\l{}}aw~K. Zaremba.
\newblock La m\'{e}thode des ``bons treillis'' pour le calcul des
  int\'{e}grales multiples.
\newblock In {\em Applications of Number Theory to Numerical Analysis}, pages
  39--119. Academic Press, New York, 1972.

\end{thebibliography}

\end{document}